\documentclass{amsart}
\usepackage{graphicx} 
\usepackage{amsmath, amssymb, amsthm}
\usepackage[mathcal]{eucal}
\usepackage{enumerate}
\usepackage{amsfonts}
\usepackage{amscd}
\usepackage{mathrsfs}
\usepackage{upgreek}
\usepackage{dirtytalk}
\usepackage{verbatim}
\usepackage{MnSymbol}
\usepackage{enumitem}
\usepackage{tikz}

\definecolor{midnightblue}{HTML}{0059b3}
\definecolor{chromered}{HTML}{f14233}
 \RequirePackage[colorlinks,citecolor=midnightblue,urlcolor=midnightblue,breaklinks=true,
linkbordercolor=midnightblue,linkcolor=midnightblue]{hyperref}

\usepackage{cleveref}

\newtheorem{theorem}{Theorem}

\newtheorem{lemma}{Lemma}

\numberwithin{equation}{section}

\def\ti{\tilde}

\def\ZZ{\ensuremath{\mathbb Z}}
\def\C{\ensuremath{\mathbb C}}
\def\D{\ensuremath{\mathbb D}}
\def\N{\ensuremath{\mathbb N}}
\def\T{\ensuremath{\mathbb T}}
\def\R{\ensuremath{\mathbb R}}
\def\Z1{\ensuremath{\mathbf 1}}

\def\fmat{{\mathcal{F}}}
\def\zmat{{{Z}}}
\def\gmat{{{G}}}
\def\phimat{{\mathcal{P}}}
\def\hmat{{\mathcal{H}}}
\def\qmat{{\mathcal{Q}}}
\def\tr{{\rm tr}}

\renewcommand{\phi}{\varphi}
\renewcommand{\epsilon}{\varepsilon}
\renewcommand{\tilde}{\widetilde}
\renewcommand{\Re}{\operatorname{Re}}
\renewcommand{\Im}{\operatorname{Im}}

\newcommand{\ct}[1]{{\color{blue} CT: #1}}
\newcommand{\meas}[1]{\frac{d|#1|}{2 \pi}}

\title[On one-sided OPUC and pointwise convergence for the NLFS]{One sided orthogonal polynomials and a pointwise convergence result for $SU(2)$-valued nonlinear Fourier series}

\author{Michel Alexis}
\author{Gevorg Mnatsakanyan}
\author{Christoph Thiele}

\address{Mathematical Institute, 
	University of Bonn,
	Endenicher Allee 60, 53115 Bonn,
	Germany}
\email{alexis@math.uni-bonn.de}
\email{thiele@math.uni-bonn.de}
\address{Institute of Mathematics of the National Academy of Sciences of Armenia, Marshal Baghramyan Ave. 24/5, Yerevan 0019, Armenia}

\email{mnatsakanyan\_g@yahoo.com}

\begin{document}

\begin{abstract}
    We elaborate on a connection between the $SU(2)$-valued nonlinear Fourier series and sequences of left and right orthogonal polynomials for complex measures on the unit circle. We show a convergence result for the associated reproducing kernel. This is a universality type result in the vein of Mate-Nevai-Totik, which turns out to be much simpler in the $SU(2)$ case than in the $SU(1,1)$ case. We then relate a.e.\ pointwise convergence of the product of left and right polynomials and their squares with both behavior of their zeros as well as behavior of some local parameters for these polynomials. We conclude by proving almost everywhere convergence along lacunary sequences of the functional $(a_n ^* +b_n)(a_n - b_n ^*)$ of  the partial $SU(2)$-valued nonlinear Fourier series $(a_n, b_n)$ under the assumption that the nonlinear Fourier series $(a,b)$ itself satisfies both $\|b\|_{L^{\infty} (\T)} < 2^{- \frac 1 2}$ and $a^*$ is outer. 
\end{abstract}

\maketitle

\tableofcontents

\section{Introduction}

For a complex measure $\mu$ on the unit circle $\T \subset \C$ and polynomials $f,g$ in a complex variable, define
\begin{equation}\label{defquasihilbert}
    \langle f , g\rangle_{\mu} := \int\limits_{\T} f g^* d \mu \, , \end{equation}
where, for functions $g$ in one or several complex variables,  we define
\begin{equation}\label{stardef}
 g^* (z_1,\dots, z_n)) := \overline{g \left ( ({\overline{z_1}})^{-1},\dots, ({\overline{z_n}})^{-1}\right )}\, .   
\end{equation}
This operation coincides with complex conjugation if all variables are in $\T$ but preserves analyticity if $g$ extends analytically.
While one may define a pairing \eqref{defquasihilbert} of polynomials more generally for distributions $\mu$ on $\T$, in this paper we
shall mainly be interested in the case of measures.
We say $f$ is left orthogonal to $g$, or $g$ is right orthogonal to $f$, 
if 
$\langle f, g \rangle_{\mu}  = 0$.
We call a polynomial $\Phi$ left or right orthogonal, if 
$\langle \Phi,p\rangle_\mu=0$
or $\langle p, \Phi\rangle_\mu=0$, respectively, 
 for all polynomials  $p$ of degree strictly less than that of $\Phi$.

Define $\mathcal{T}$ to be the class of 
complex probability measures $\mu$ on $\T$ such that, for each $n\in \mathbb{N}_0$, there exist a unique left orthogonal monic polynomial $\Phi_n$ and a unique right orthogonal  monic polynomial $\ti \Phi_n$ of degree $n$. We call the sequences $(\Phi_n)$ and $(\ti \Phi_n)$ the one-sided orthogonal polynomials associated to $\mu$. 
In Lemma \ref{lem:Szego_rec_general}, we show for $\mu\in \mathcal{T}$ 
 the generalized Szeg\H o recurrence relations $\Phi_0=\ti \Phi_0=1$ and,
 for $n\in \mathbb{N}_0$, 
\begin{equation}\label{eq:Szego_recur_intro}
\Phi_{n+1} - z \Phi_n = \overline{F_{n+1}} z^n \ti  \Phi_n ^* \, , \qquad     \ti \Phi_{n+1} - z \ti \Phi_n = \overline{\ti F_{n+1}} z^n \Phi_n ^* \, ,
\end{equation}
for some complex numbers $F_{n+1}$ and $\ti F_{n+1}$ depending on $\mu$.
Define  $\mathcal{T}_+$ to be the class of measures $\mu \in \mathcal{T}$ for which $F_{n} = \ti F_{n}$ for all $n\geq 0$.
In this case, induction with \eqref{eq:Szego_recur_intro} shows $\Phi_n = \ti \Phi_n$ for all $n \geq 0$. 
The class $\mathcal{T}_+$ contains all nonnegative probability measures on $\T$, when the polynomials $(\Phi_n)$ are the well-studied orthogonal polynomials on the unit circle \cite{simon}. More generally, $\mathcal{T}_{+}$ contains all real probability measures, whose associated polynomials were studied in \cite{Landau, Krein_sign}.

Motivated by the $SU(2)$-valued nonlinear Fourier series \cite{tsai}, here we  study the class $\mathcal{T}_{-}$ of measures $\mu \in \mathcal{T}$ for which $F_{n} = -\ti F_{n}$ for all $n\geq 1$.
We have not been able to identify literature where this class  has been studied.
If $\mu \in \mathcal{T}_{-}$, then  \eqref{eq:norm_poly_potential} shows that the monic polynomials $\Phi_n$ and $\ti \Phi_n$ satisfy 
\begin{equation}\label{normPhi}
   \langle \Phi_n,\ti \Phi_n\rangle_\mu   =   \prod\limits_{j=1} ^n (1 + |F_j|^2) >0\, .
 \end{equation}
 Thus one may divide the polynomials $\Phi_n$, $\ti \Phi_n$ 
 by the square root of the left side of \eqref{normPhi} and obtain so-called normalized left and right orthogonal polynomials $\phi_n$, $\ti \phi_n$  with positive leading coefficient and, for all $j,k\in \mathbb{N}_0$, with the Kronecker delta $\delta_{jk}$,
\begin{equation}\label{orthonormal}
    \langle \phi_j, \ti \phi_k \rangle_{\mu} = \delta_{j k} \, .
\end{equation}
In particular, \eqref{orthonormal} shows that the function
\begin{equation}\label{reprokernel}
    K_n(z, \lambda) := \sum_{j=0}^n \ti\phi_j(z ) {\phi_j^*(\lambda)}  
\end{equation}
is a reproducing kernel, since it satisfies
\begin{equation}\label{reproducing}
    \langle f(\cdot) , K_n (\cdot, \lambda ) \rangle_{\mu}  = f(\lambda) 
\end{equation}
for all normalized left orthogonal polynomials $f$ of degree at most $n$ and thus for all polynomials $f$ of degree at most $n$. 
Note that $K$ is analytic in both variables due to the star in \eqref{reprokernel} in place of a complex conjugation often done in the literature.

Our first main result is a Mate-Nevai-Totik \cite{matenevaitotik} type universality theorem for reproducing kernels $K_n$ generated by measures $\mu \in \mathcal{T}_{-}$. Define the Dirichlet kernel
    \[
    D_n (z, \lambda) := \sum\limits_{j=0} ^{n} z^j {\lambda^{-j}}  
    \]
    as the reproducing kernel associated to the uniform probability measure $\frac{d |z|}{2 \pi}$ on $\T$. 
Given $\mu\in \mathcal{T}_-$, let $E(\mu)$ denote the set of all $s\in \T$ such that
$s$ is not in the support of the  singular part of $\mu$, $s$ is a Lebesgue point of the Radon-Nikodym derivative $\frac{w}{2 \pi}$ of $\mu$ with respect to arclength, and in particular $|w(s)| < \infty$.
For $s\in E(\mu)$ and $n\in \mathbb{N}_0$, define
\begin{equation}\label{defineL}
    L(\mu,s,n):=\int\limits_{\T} \min \left \{n+1, \frac{1}{(n+1)|y-s|^2} \right \}  \left | d\mu (y) - w(s) \meas{y} \right |\, ,
    \end{equation}
    where $d |y|$ denotes the arclength measure on $\T$.
 Recall that Lebesgue differentiation implies that for all $s\in E(\mu)$ we have
\begin{equation}
    \lim_{n\to \infty} L(\mu,s,n)=0\, ,
\end{equation}
because the integrand in
\eqref{defineL} is an approximate identity while the measure   in
\eqref{defineL} has Lebesgue density $0$ at $s$.
\begin{theorem}\label{fejerconvergence}
    Let $\mu \in \mathcal{T}_{-}$, $C\geq 2$, $s\in E (\mu)$ and  $n\in \mathbb{N} _0$ with $n\ge 2C$. If we have $|z-s|\le C/n$ and $|\lambda-s|\le C/n$, then
    \begin{equation}\label{eq:KLbound}
        \frac{1}{n+1} \left| \overline{w (s)} K_n(z, \lambda ) -  D_n(z, \lambda ) \right| \leq  e^{30C} L(\mu,s,n) \, .
    \end{equation}
\end{theorem}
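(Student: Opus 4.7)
Plan: Derive an integral identity for the error $E(z,\lambda) := \overline{w(s)}\,K_n(z,\lambda) - D_n(z,\lambda)$ expressing it in terms of the measure perturbation $d\nu := d\mu - w(s)\,\tfrac{d|y|}{2\pi}$, then control the right-hand side using pointwise Dirichlet-kernel estimates together with a self-referential bootstrap on $K_n$.

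First, I would establish the identity
\begin{equation*}
\overline{w(s)}\, K_n(z,\lambda) - D_n(z,\lambda) = -\int_{\T} D_n(z,y)\, K_n(y,\lambda)\, d\overline{\nu}(y), \qquad z,\lambda\in\T.
\end{equation*}
The derivation is as follows: apply the $\mu$-reproducing property \eqref{reproducing} to the monomials $f(y)=y^k$ for $0\le k\le n$ at $a\in\T$, giving $a^k = \int y^k\,\overline{K_n(y,a)}\,d\mu$. Splitting $d\mu$ into its $w(s)\,\tfrac{d|y|}{2\pi}$ part and $d\nu$ part, recognizing $\int y^k\,\overline{K_n(y,a)}\,\tfrac{d|y|}{2\pi}$ as the conjugate of the $k$th Lebesgue--Fourier coefficient of $K_n(\cdot,a)$, conjugating, multiplying by $z^k$, and summing $k=0,\ldots,n$ assembles the Dirichlet kernel $D_n(z,y)=\sum_k (z/y)^k$ in the integrand. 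Both sides are polynomial in $z$ and Laurent in $\lambda$ of degree $\le n$, so the identity extends from $\T$ to all $z,\lambda\in\C\setminus\{0\}$ by polynomial identity.

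Next, using $|D_n(u,v)|\le \min(n+1, \pi/|u-v|)$ together with $|z-s|, |\lambda-s|\le C/n$ and $n\ge 2C$ (so $|y-z|,|y-\lambda|\gtrsim |y-s|$ when $|y-s|\gtrsim C/n$) gives the pointwise estimate
$|D_n(z,y)||D_n(y,\lambda)| \le C_1(C)\,(n+1)\min\{n+1,\tfrac{1}{(n+1)|y-s|^2}\}$, matching exactly the kernel defining $L(\mu,s,n)$ in \eqref{defineL}. To handle the factor $K_n(y,\lambda)$ in the integrand, I would substitute $K_n(y,\lambda) = (D_n(y,\lambda) + E(y,\lambda))/\overline{w(s)}$, producing the self-referential equation
\begin{equation*}
\overline{w(s)}\,E(z,\lambda) = -\int D_n(z,y) D_n(y,\lambda)\,d\overline{\nu}(y) - \int D_n(z,y)\, E(y,\lambda)\,d\overline{\nu}(y).
\end{equation*}
The first integral is bounded by $C_1(n+1)\,L(\mu,s,n)$ via the previous step. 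Iterating on the second (a Neumann series in the operator $f\mapsto \tfrac{1}{\overline{w(s)}}\int D_n(\cdot,y) f(y)\,d\overline{\nu}$) and tracking constants through a dyadic decomposition of $\T$ around $s$ yields the exponential prefactor $e^{30C}$. Extension from $z,\lambda\in\T$ to $|z-s|, |\lambda-s|\le C/n$ in $\C$ follows by Bernstein-type estimates on the Laurent polynomials, absorbing an $O(1)$ factor into $e^{30C}$.

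The main obstacle is the bootstrap: the naive substitution $K_n = (D_n+E)/\overline{w(s)}$ introduces an explicit $|w(s)|^{-1}$ factor that must be absorbed into the constant $e^{30C}$ to match the stated bound (which is independent of $|w(s)|$). This likely requires exploiting the positivity \eqref{normPhi} specific to $\mathcal{T}_{-}$, either via an $\mathcal{T}_{-}$-specific Christoffel--Darboux-type identity providing an a priori bound on $|K_n|$ that avoids $|w(s)|^{-1}$, or by tracking a cancellation in the Neumann series between the $|w(s)|^{-1}$ factors and the $\overline{w(s)}$ already present in the definition of $E$ itself.
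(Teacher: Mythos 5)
Your starting identity is essentially the one the paper uses (the paper's version reads $\overline{w(s)}K_n(z,\lambda)-D_n(z,\lambda)=-\int_\T K_n(z,y)D_n(y,\lambda)\,(d\overline\mu(y)-\overline{w(s)}\tfrac{d|y|}{2\pi})$, obtained by applying the $\overline\mu$-reproducing property of $\ti K_n$ to $D_n(\cdot,\lambda)$ and then the Lebesgue reproducing property of $D_n$ to $K_n(z,\cdot)$), and the reduction of the double Dirichlet-kernel product to the kernel $\min\{n+1,((n+1)|y-s|^2)^{-1}\}$ defining $L(\mu,s,n)$ is also the paper's step. But the bootstrap you build around the factor $K_n(y,\lambda)$ is where the argument breaks. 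The Neumann series for the operator $f\mapsto\overline{w(s)}^{-1}\int D_n(\cdot,y)f(y)\,d\overline\nu(y)$ requires its norm to be less than $1$, and that norm is controlled only by $\sup_z\int|D_n(z,y)|\,|d\nu(y)|$, which by Cauchy--Schwarz against the $L$-kernel is of size $(n+1)^{1/2}L(\mu,s,n)^{1/2}\|\nu\|^{1/2}$ --- not small in general, and the theorem assumes no smallness of $L(\mu,s,n)$ whatsoever (it is a bound valid for every $n\ge 2C$, not an asymptotic statement). So the iteration need not converge, and even where it does, the claimed tracking of constants to $e^{30C}$ is not substantiated.

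The missing ingredient is exactly the a priori bound you flag as an ``obstacle'' in your last paragraph, and it is not optional: one needs $|K_n(y,\lambda)|\le e^{O(C)}\min(n+1,|y-s|^{-1})$ for $y$ in the enlarged disc and $\lambda$ near $s$, with a constant independent of $w(s)$ and of $L$. In the $\mathcal{T}_-$ setting this follows cheaply from the $SU(2)$ structure: the transfer matrices are unitary, so $|\phi_n|^2+|\ti\phi_n|^2=2$ on $\T$ (equation \eqref{eq:ortho_polys_det2}), whence $|\phi_n|,|\ti\phi_n|\le\sqrt2$ on $\overline{\D}$ and $\le e^{2C}$ on $B(0,1+C/n)$; summing the series for $K_n$ gives the $(n+1)$ bound, and the Christoffel--Darboux formula (Lemma \ref{christoffeldarboux}) gives the $|y-s|^{-1}$ bound. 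This is the content of Lemmas \ref{lem:poly_bd_gen} and \ref{lem:kbounds}. Once you have it, no self-referential equation or Neumann series is needed at all: you estimate the single integral $\int|K_n(z,y)||D_n(y,\lambda)|\,|d\mu-w(s)\tfrac{d|y|}{2\pi}|$ directly and read off $e^{24C}(n+1)L(\mu,s,n)$. Your option (a) is the right fix; as written, the proposal leaves the load-bearing step unproved and replaces it with an iteration that does not close.
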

See \cite{Bessonov, Gubkin, EiLuSi, EiLuWo} for related universality results. Theorem \ref{fejerconvergence} can also be seen as a nonlinear generalization of the convergence of Fej\'er means, as will be elaborated in the appendix. 
Thanks to  Theorem \ref{fejerconvergence}, we develop in Section \ref{sec:lpfe} a remarkable local representation of the one-sided orthogonal polynomials $(\phi_n)$ near $s \in \T$ by parameters 
\begin{equation}\label{def:A_B_n}
        A_{n,s} := \frac{\phi_n(s)-\phi_n(s\gamma_n)}{2s^n} \, , \qquad 
    B_{n,s} := \frac{\phi_n(s)+\phi_n(s\gamma_n)}{2} \, , 
    \end{equation}
where 
\begin{equation}\label{gamman}
    \gamma_n :=e^{i\pi/n}\ .
\end{equation}
Our remaining results are all proved by careful analysis of these local parameters in three regions, pictured in Figure \ref{fig:fig}. In what follows, if no measure is specified in an integral over $\T$, we take the measure to be normalized arclength $\meas{s}$.
    \begin{theorem}\label{thm:conv_to_AB}
    Let  $\mu \in \mathcal{T}_{-}$ and $s\in E(\mu)$.
For any increasing integer sequence $(n_k)$,
\begin{equation}\label{subshyp1}
     \lim\limits_{k\to \infty} A_{n_k, s} B_{n_k, s} =0\, ,
    \end{equation}
    implies
\begin{equation}\label{subscon1}
     \lim\limits_{k\to \infty} ({\phi_{n_k}^*(s)}\ti \phi_{n_k}(s))^2 = {\overline{w(s)^{-2}}}\, .
    \end{equation}

Assume further that both $\lim\limits_{n\to \infty}|F_n|=0$ and
\begin{equation}\label{convabhyp}
      \lim\limits_{n \to \infty} \int\limits_{\T} \left | \phi_n^* (s)  \ti \phi_n (s) - \frac{1}{\overline{w (s)}} \right | = 0 \, .
      \end{equation} 
    There is a set $ E_0 (\mu)\subset E(\mu)$ of full measure such that  for every $s\in E_0(\mu)$, we have   
\begin{equation}\label{subshyp2}
     \lim\limits_{n\to \infty} ({\phi_{n}^*(s)}\ti \phi_{n}(s))^2 = {\overline{w(s)^{-2}}} \, ,
    \end{equation} 
    implies both
\begin{equation}\label{subscon2}
     \lim\limits_{n\to \infty} {\phi_{n}^*(s)}\ti \phi_{n}(s) = {\overline{w(s)^{-1}}} \, ,
    \end{equation}
    \begin{equation}\label{subscon3}
     \lim\limits_{n\to \infty} A_{n,s} B_{n,s}=0 \, .
    \end{equation}
\end{theorem}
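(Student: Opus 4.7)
The plan rests on applying Theorem \ref{fejerconvergence} at the four points $(z, \lambda) \in \{s, s\gamma_n\}^2$, which all lie within distance $\pi/n$ of $s$, and unpacking the resulting identities through the local parameterization \eqref{def:A_B_n}. Since $\gamma_n^n = -1$, one computes explicitly $D_n(s, s) = D_n(s\gamma_n, s\gamma_n) = n + 1$ and $D_n(s, s\gamma_n) = -i\cot(\pi/(2n))$, so each value $\overline{w(s)} K_n(z, \lambda)/(n+1)$ is determined up to an additive error of size $e^{30\pi} L(\mu, s, n) = o(1)$. I also need the conservation law
\[
|\phi_n(s)|^2 + |\tilde{\phi}_n(s)|^2 = 2 \quad \text{for all } s \in \T, \, n \ge 0,
\]
which follows by multiplying the $\mathcal{T}_-$-form of the recurrences \eqref{eq:Szego_recur_intro} by their conjugates and adding: a factor $(1 + |F_{n+1}|^2)$ appears on both sides, canceling against the normalization and telescoping to the initial value $2$.

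\medskip

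\emph{Part 1.} Using \eqref{def:A_B_n} I write $\phi_n(s) = s^n A_{n,s} + B_{n,s}$ and $\phi_n(s\gamma_n) = -s^n A_{n,s} + B_{n,s}$, together with the analogous decomposition for $\tilde{\phi}_n$. I expect Section \ref{sec:lpfe} to translate the four Theorem~\ref{fejerconvergence} evaluations, via a Christoffel--Darboux-type identity for the biorthogonal pair $(\phi_j, \tilde{\phi}_j)$, into a single local identity of the schematic form
\[
\bigl(\overline{w(s)}\, \phi_n^*(s)\, \tilde{\phi}_n(s)\bigr)^2 = 1 + R_n + o(1),
\]
where $R_n$ is bilinear in the local parameters with a dominant $A_{n,s} B_{n,s}$-contribution. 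The \emph{squared} form is natural because the symmetric-under-$s \leftrightarrow s\gamma_n$ bilinear combination $\phi_n(s)\phi_n(s\gamma_n) = B_{n,s}^2 - s^{2n} A_{n,s}^2$ is what one extracts from the universality identities at these two points, and hypothesis \eqref{subshyp1} kills the dominant part of $R_{n_k}$, yielding \eqref{subscon1}.

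\medskip

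\emph{Part 2.} Set $M_n := \phi_n^*(s) \tilde{\phi}_n(s)$. Hypothesis \eqref{subshyp2} gives $(M_n - \overline{w(s)^{-1}})(M_n + \overline{w(s)^{-1}}) \to 0$, so $M_n$ is eventually close to exactly one of $\pm \overline{w(s)^{-1}}$. To identify the ``$+$'' branch I extract from \eqref{convabhyp} a subsequence along which $M_{n_k} \to \overline{w(s)^{-1}}$ a.e., and define $E_0(\mu) \subset E(\mu)$ to be the intersection with this a.e.\ convergence set (in particular $w(s) \ne 0$ on $E_0(\mu)$). To rule out branch-switching, I compute directly from the $\mathcal{T}_-$-form of \eqref{eq:Szego_recur_intro} the exact step identity
\[
(1 + |F_{n+1}|^2) M_{n+1} = (1 - |F_{n+1}|^2) M_n - \overline{F_{n+1}}\, s^{n-1}\, \overline{\phi_n(s)}^2 + F_{n+1}\, s^{1-n}\, \tilde{\phi}_n(s)^2,
\]
valid on $\T$. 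The conservation law bounds $|\overline{\phi_n(s)}^2|, |\tilde{\phi}_n(s)^2| \le 2$, so with $|F_n| \to 0$ I obtain $|M_{n+1} - M_n| \to 0$ uniformly on $\T$. Any branch-switch between consecutive $n$ would require a jump of size at least $2|\overline{w(s)^{-1}}| - o(1) > 0$ at $s \in E_0(\mu)$, so the full sequence $M_n$ remains on one branch, and the subsequential limit pins it to $\overline{w(s)^{-1}}$, proving \eqref{subscon2}. Finally, \eqref{subscon3} follows by inverting the Part 1 identity: $M_n^2 \to \overline{w(s)^{-2}}$ forces $R_n \to 0$, and the structure of $R_n$ then pins down $A_{n,s} B_{n,s} \to 0$.

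\medskip

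\textbf{Main difficulty.} The crux is constructing the local identity in Part 1 and showing that $R_n$ isolates $A_{n,s} B_{n,s}$ (rather than a generic quadratic combination of all four local parameters) modulo the universality error. This relies decisively on the $\mathcal{T}_-$ sign-flip $\tilde F_n = -F_n$, which produces a sharp cancellation in the Christoffel--Darboux expansion for the biorthogonal pair $(\phi_j, \tilde{\phi}_j)$ that is absent in the $\mathcal{T}_+$ case.
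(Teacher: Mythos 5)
Your proposal splits into three pieces of very different quality, so let me address them separately.

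\textbf{Part 1.} Your mechanism is in the right direction but stays schematic at the decisive point. The paper does not produce a single identity $(\overline{w}M_n)^2=1+R_n+o(1)$ with $R_n$ dominated by $A_nB_n$; instead it proves a dichotomy (Lemma \ref{smallablem}): since $\min(|A_{n_k}|,|B_{n_k}|)^2\le |A_{n_k}B_{n_k}|\to 0$, one of the two parameters is small, and small $|A_n|$ forces $\phi_n^*\ti\phi_n\approx -\overline{w(s)^{-1}}$ while small $|B_n|$ forces $\phi_n^*\ti\phi_n\approx +\overline{w(s)^{-1}}$. The square in \eqref{subscon1} is there because of this sign ambiguity, not because of the $s\leftrightarrow s\gamma_n$ symmetry you invoke. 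Writing $M_n\pm \overline{w(s)^{-1}}$ via \eqref{difference ab square}, every summand of $M_n^2-\overline{w(s)^{-2}}$ does carry one $A$-type and one $B$-type factor, but to convert that into $O(|A_nB_n|)$ you need the comparability $|\ti A_n|\sim|A_n|$, $|\ti B_n|\sim|B_n|$ of Lemma \ref{abtocdlem}, which you have not supplied. Acceptable as a blind outline, but the "I expect Section \ref{sec:lpfe} to translate\dots" step is exactly where the work is.

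\textbf{Conclusion \eqref{subscon2}.} Your argument here is correct and is a genuinely different, more elementary route than the paper's. The exact step identity you write for $M_n=\phi_n^*\ti\phi_n$ follows from \eqref{recursion phi}--\eqref{recursion ti phi}, and with $|\phi_n|^2+|\ti\phi_n|^2=2$ it gives $|M_{n+1}-M_n|\le 8|F_{n+1}|\to 0$; combined with \eqref{subshyp2} (which places $M_n$ near one of $\pm\overline{w(s)^{-1}}$, a nonzero gap since $1\le|w(s)|<\infty$ on $E(\mu)$) and the a.e.\ convergent subsequence extracted from \eqref{convabhyp}, the branch is locked to $+\overline{w(s)^{-1}}$. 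The paper instead reaches \eqref{subscon2} only after the local-parameter bootstrapping; your argument buys \eqref{subscon2} without any of Section \ref{sec:lpre}.

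\textbf{Conclusion \eqref{subscon3}.} Here there is a genuine gap, and it is the hardest part of the theorem. The implication ``$M_n^2\to\overline{w(s)^{-2}}$ (or even $M_n\to\overline{w(s)^{-1}}$) forces $A_nB_n\to 0$'' is false pointwise: in the ``yellow region'' where all four of $|A_n|,|B_n|,|\ti A_n|,|\ti B_n|$ are bounded below by a fixed $\eta$, the constraints \eqref{sum squares ab}, \eqref{product ab}, \eqref{difference ab square} do not prevent $M_n$ from sitting near $\overline{w(s)^{-1}}$, and none of the refined lemmas of Section \ref{sec:lpre} apply there (Lemma \ref{abclem} needs the relevant parameter to lie in a \emph{middle} range). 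The paper excludes this region not pointwise but in measure: it defines
\[
\Sigma_\eta^m=\{s\in E(\mu):\ \forall n>m,\ \eta<\min(|A_n|,|B_n|,|\ti A_n|,|\ti B_n|)\}
\]
and proves $|\Sigma_\eta^m|=0$ (Lemma \ref{recur}) using the $L^1$ convergence \eqref{convabhyp} against the oscillation Lemma \ref{phiphi}; the sets $\Sigma_\eta^m$ must be removed in the definition of $E_0(\mu)$. Your $E_0(\mu)$ is only the intersection with the a.e.\ convergence set of the subsequence and omits this removal, so your argument never obtains an index $n_1$ at which some local parameter is small, which is what launches the bootstrapping ($|B_n|\le 10\eta$ for all $n\ge n_1$, via Lemmas \ref{abclem} and \ref{lem:cty_phi_A_B}) that actually yields $A_nB_n\to 0$. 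You use \eqref{convabhyp} only to pick a subsequence; its essential, quantitative role in killing the yellow region is missing from your proposal.
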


\begin{figure}
    \centering
     \caption{Local parameter regions}
    \label{fig:fig}
\begin{tikzpicture}
\node[draw, circle, color=white] (Z) at (0, 2) {};
    \node[draw, circle, fill=yellow] (A) at (0, 1) { $A, B\sim 1$};
    \node[draw, circle, fill=cyan] (B) at (-2.5, 0) { $A<<1$};
    \node[draw, circle, fill=cyan] (C) at (2.5, 0) { $B<<1$};

    \draw (B) -- (A);
    \draw (C) -- (A);
\end{tikzpicture}
\end{figure}
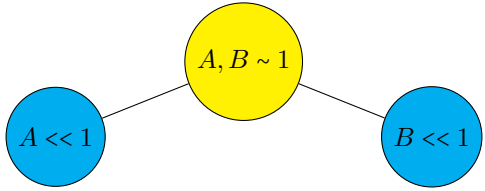

As Lemma \ref{smallablem} shows,
small $A$ has the effect that $ \phi_n^*  \ti \phi_n$ is close to $-\overline{w(s)^{-1}}$, while small $B$ has the effect that $ \phi_n^*  \ti \phi_n$ is close to $\overline{w(s)^{-1}}$. The first part of
Theorem \ref{thm:conv_to_AB} tautologically avoids the yellow region 
via assumption \eqref{subshyp1}, and thus obtains the convergence of $(\phi_n^*  \ti \phi_n)^2$ to $\overline{w(s)^{-2}}$. The yellow region creates large fluctuations of $ \phi_n^*  \ti \phi_n$ on Heisenberg large arcs of $\T$. This region can be suppressed by a milder integral condition \eqref{convabhyp}, as is done in the second part of Theorem \ref{thm:conv_to_AB}, or later in Theorem \ref{thm:lacunary}. The integral assumption \eqref{convabhyp} in Theorem \ref{thm:conv_to_AB}, combined with results of Section \ref{sec:lpre},
together provide a positive distance between the regions of small $A$ and small $B$. As we 
control the regions for all $n$ by the very strong assumption \eqref{subshyp2} over the full sequence of natural numbers,
smallness of $(F_n)$ implies
that asymptotically one can not jump this distance. The sequence therefore stabilizes in one of the regions, and the region of small $B$ is preferred 
given
the convergence \eqref{convabhyp} in mean. We obtain almost everywhere convergence of $ \phi_n^*  \ti \phi_n$ to
$\overline{w(s)^{-1}}$.

In the case $\mu \in \mathcal{T}_+$, there has been recent activity connecting the behavior of roots of the sequence $(\Phi_n)$ of orthogonal polynomials on the unit circle (OPUC, see \cite{simon} for an extensive treatise) to  pointwise almost everywhere boundedness \cite{Poltoratski, bessonovdenisov2021zero}. 
The following theorem shows that condition 
\eqref{subshyp1} is equivalent to the condition
 \begin{equation}\label{subshyp3}
        \liminf_{k\to \infty} n_k\min\{|z-s|: \phi_{n_k}(z)=0\} = \infty
    \end{equation}
on the zeros of the left orthogonal polynomials. The combination of 
Theorems \ref{thm:conv_to_AB} and \ref{zeros theorem} then provides an 
analog of \cite[Theorem 3]{bessonovdenisov2021zero} in the present setting of  measures $\mu \in \mathcal{T}_{-}$, connecting pointwise convergence to the behavior of zeros of orthogonal polynomials.

\begin{theorem}\label{zeros theorem}
    Let  $\mu \in \mathcal{T}_{-}$ and $s\in E(\mu)$. For all $0<\epsilon < 10^{-5}$, there is an $n_0>40$ such that, for all $n>n_0$, we have \eqref{convergence of AB1} below implies \eqref{zeros going to infinity1} below:
\begin{equation}\label{convergence of AB1}
     |A_{n,s} B_{n,s}|\ge  \epsilon \, .
    \end{equation}
\begin{equation}\label{zeros going to infinity1}
        \min\{|z-s|: \phi_{n}(z)=0\} \le 10^4\epsilon^{-1}n^{-1} \, .
    \end{equation}
Moreover,  if in addition $e^{\epsilon^{-1}} > |w(s)|$, then \eqref{zeros going to infinity2}
implies \eqref{convergence of AB2}:
\begin{equation}\label{zeros going to infinity2}
        \min\{|z-s|: \phi_{n}(z)=0\} \le 
  \epsilon^{-1}n^{-1} \, .
    \end{equation}
\begin{equation}\label{convergence of AB2}
     |A_{n,s} B_{n,s}|\ge e ^{-10\epsilon^{-1}}\, .
    \end{equation}
\end{theorem}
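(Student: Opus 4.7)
The proof hinges on the local representation of $\phi_n(z)$ for $z$ near $s$ developed in Section \ref{sec:lpfe} via Theorem \ref{fejerconvergence}. Unfolding \eqref{def:A_B_n} one has
\[
\phi_n(s) = B_{n,s} + s^n A_{n,s}, \qquad \phi_n(s\gamma_n) = B_{n,s} - s^n A_{n,s},
\]
so $(A_{n,s}, B_{n,s})$ is simply a repackaging of the two-point data of $\phi_n$ at the endpoints of the arc from $s$ to $s\gamma_n$. The local representation extends this picture to the whole arc $\{z:|z-s| \le C/n\}$: up to a remainder controlled by $|w(s)|^{-1} L(\mu,s,n)$ (which is small for $s \in E(\mu)$ by Lebesgue differentiation), $\phi_n(z)$ equals an explicit linear combination $B_{n,s}\,P_n(z) + s^n A_{n,s}\,Q_n(z)$ of two Dirichlet-kernel-type oscillatory factors, normalized so as to be consistent with the prescribed values at $z = s, s\gamma_n$.

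For the implication \eqref{convergence of AB1} $\Rightarrow$ \eqref{zeros going to infinity1}, I would first locate the zero of the leading combination $B\,P_n + s^n A\,Q_n$, which is determined algebraically from the ratio $A/B$. The hypothesis $|A_{n,s}B_{n,s}| \geq \epsilon$, combined with a natural upper normalization of $|A|^2 + |B|^2$ coming from Theorem \ref{fejerconvergence}, keeps this approximate zero at distance $\lesssim \epsilon^{-1}/n$ from $s$. A Rouch\'e-type comparison on a small disk about this approximate zero then produces an actual zero of $\phi_n$ within the claimed radius $10^4\epsilon^{-1}/n$, provided $n > n_0$ is large enough that $|w(s)|^{-1}L(\mu,s,n)$ is dominated by the oscillation of the leading part on the boundary circle.

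For the reverse direction \eqref{zeros going to infinity2} $\Rightarrow$ \eqref{convergence of AB2}, a zero $z_0$ of $\phi_n$ with $|z_0-s| \leq \epsilon^{-1}/n$, inserted into the local representation, yields the linear constraint
\[
B_{n,s}\,P_n(z_0) + s^n A_{n,s}\,Q_n(z_0) = O\left(|w(s)|^{-1}L(\mu,s,n)\right).
\]
On the scale $|z_0-s| \sim \epsilon^{-1}/n$, the magnitudes $|P_n(z_0)|$ and $|Q_n(z_0)|$ are bounded above by $e^{c\epsilon^{-1}}$, so the hypothesis $e^{\epsilon^{-1}} > |w(s)|$ together with $n$ large ensures the right-hand remainder is much smaller than the coefficients on the left. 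The constraint pins $A_{n,s}/B_{n,s}$ near an explicit ratio whose modulus lies in $[e^{-c\epsilon^{-1}}, e^{c\epsilon^{-1}}]$, and combining with a lower bound on $|A|^2+|B|^2$ (again extracted from Theorem \ref{fejerconvergence}) gives $|A_{n,s} B_{n,s}| \geq e^{-10\epsilon^{-1}}$. The main obstacle is the explicit bookkeeping of constants, namely tracking the factors $10^4$ and $10$ through both the Dirichlet-kernel approximation error and the sinusoidal behavior of $P_n, Q_n$ on the scale $\epsilon^{-1}/n$; the polynomial-versus-exponential asymmetry between the two directions reflects the fact that a zero sitting at the outer edge of its allowed disk can force the ratio $A/B$ to be extremely unbalanced, permitting only an exponentially weak lower bound on $|AB|$.
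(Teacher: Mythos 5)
Your proposal is correct and follows essentially the same route as the paper: the local model is exactly the binomial $A_{n,s}z^n+B_{n,s}$ from Lemma \ref{approximation with A B C D} (so your $P_n\equiv 1$ and $Q_n(z)=(z/s)^n$), the forward direction is a Rouch\'e comparison on a cell around the nearest zero of that binomial, whose distance to $s$ is controlled by $|r-1|$ with $|A_{n,s}|r^n=|B_{n,s}|$, and the converse plugs the zero into the model and combines the resulting constraint on $|A_{n,s}/B_{n,s}|$ with the lower bound $\max(|A_{n,s}|,|B_{n,s}|)\gtrsim |w(s)|^{-1}\ge e^{-\epsilon^{-1}}$. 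The only small corrections are that the approximation error is $e^{36C}L(\mu,s,n)$ rather than $|w(s)|^{-1}L(\mu,s,n)$, and the needed lower bound on $\max(|A_{n,s}|,|B_{n,s}|)$ comes from \eqref{difference ab square} in Lemma \ref{lem:liminf_A_B_nonzero} rather than directly from Theorem \ref{fejerconvergence}; both are available, so your argument goes through.
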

A monotone increasing sequence $(n_k)$ is called lacunary if there exists an $L>1$ such that for all $k\in \mathbb{N}_0$ we have
$n_{k+1}\ge Ln_k$.
\begin{theorem}\label{thm:lacunary}
    Let $\mu \in \mathcal{T}_{-}$ and assume the integral condition \eqref{convabhyp} and that $\|(F_n)\|_{\ell^2(\mathbb{N}_0)}$ is finite. 
    If $(n_k)$ is a lacunary sequence, then for almost every $s \in \T$, we have  \[
    \lim\limits_{k \to \infty} { (\phi_{n_k}^* (s)} \ti\phi_{n_k} (s))^2 = {\overline{w (s)^{-2}}}\, .
    \]
    
\end{theorem}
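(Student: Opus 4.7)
The plan is to combine the first part of Theorem \ref{thm:conv_to_AB}, applied along the lacunary sequence $(n_k)$, with a square-summability estimate for $(A_{n_k}B_{n_k})$ in $L^2(\T)$. Concretely, by that theorem it suffices to prove that for almost every $s\in\T$,
\[
\lim_{k\to\infty} A_{n_k,s} B_{n_k,s} \;=\; 0,
\]
and I would derive this pointwise assertion, via Fubini, from the stronger inequality
\begin{equation}\label{eq:plan:summable}
    \sum_{k=0}^\infty \int_\T |A_{n_k,s} B_{n_k,s}|^2 \meas{s} \;<\; \infty.
\end{equation}

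To estimate $\|A_n B_n\|_{L^2(\T)}$, I would first exploit the identity $\gamma_n^n = -1$ to rewrite
\[
A_{n,s} B_{n,s} \;=\; \tfrac{1}{4}\bigl(h_n(s) + h_n(s\gamma_n)\bigr), \qquad h_n(s) := \phi_n(s)^2/s^n,
\]
where $h_n$ is a Laurent polynomial of degrees between $-n$ and $n$. Plancherel then gives
\[
\|A_n B_n\|_{L^2(\T)}^2 \;=\; \tfrac{1}{4} \sum_{k=-n}^{n} |\widehat{h_n}(k)|^2 \cos^2\!\bigl(\pi k/(2n)\bigr),
\]
so the task reduces to controlling the ``middle'' Fourier coefficients of $\phi_n^2$ (those of index near $n$), with a damping factor that suppresses the extremes $k=\pm n$.

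To bound these middle coefficients, I would unwind the Szeg\H{o} recursion \eqref{eq:Szego_recur_intro}, writing $\phi_n$ as a telescoping sum in the parameters $(F_j)$ and producing convolution-type expressions for the middle coefficients of $\phi_n^2$. The assumption $\|(F_n)\|_{\ell^2(\mathbb{N}_0)}<\infty$ then enters through orthogonality, while \eqref{convabhyp} cancels the asymptotic $\phi_n^*\tilde\phi_n \sim 1/\overline{w}$ in $L^1$, a cancellation I would upgrade to an $L^2$ bound on the $\cos^2$-damped band using the local-parameter machinery of Sections \ref{sec:lpfe}--\ref{sec:lpre}. Lacunarity is indispensable here: the dominant frequencies for different $n_k$ occupy essentially disjoint dyadic scales, so the sum in \eqref{eq:plan:summable} telescopes to a finite total.

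The hardest step is the last one, producing a usable upper bound on the middle Fourier coefficients of $\phi_n^2$ from the pair of hypotheses $\|(F_n)\|_{\ell^2}<\infty$ and \eqref{convabhyp}. The naive pointwise bound $|A_n B_n|\le \tfrac{1}{2}(|A_n|^2+|B_n|^2)$ is too crude, because both parameters can simultaneously be of order one in the yellow region of Figure \ref{fig:fig} and this bound is not summable in $n$. The remedy, drawn from Section \ref{sec:lpre}, is a quantitative version of the fact that the sets where $A$ is small and where $B$ is small are separated by a positive distance, which forces the product $A_{n}B_{n}$ to be integrably small on $\T$ and thereby yields \eqref{eq:plan:summable}.
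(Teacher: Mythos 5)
Your opening reduction coincides with the paper's: it suffices to show $A_{n_k,s}B_{n_k,s}\to 0$ for almost every $s$ and then invoke the first part of Theorem \ref{thm:conv_to_AB}, and your algebraic identity $A_{n,s}B_{n,s}=\tfrac14(h_n(s)+h_n(s\gamma_n))$ together with the resulting Plancherel formula for $\|A_nB_n\|_{L^2(\T)}^2$ is correct. The gap is that everything then rests on the summability claim
\begin{equation*}
\sum_{k=0}^\infty \int_\T |A_{n_k,s}B_{n_k,s}|^2\,\meas{s} \;<\; \infty\, ,
\end{equation*}
which you never prove and which is almost certainly not derivable from the stated hypotheses. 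Both hypotheses are purely qualitative: \eqref{convabhyp} is an $L^1$ convergence with no rate, and square summability of $(F_n)$ carries no decay rate, whereas a convergent series over $k$ is a quantitative, rate-type assertion. Your sketch of the ``hardest step'' does not close this. The linear-in-$F$ contribution to the middle coefficients of $\phi_n^2$ is indeed of size $\sum_{n/2\le j\le n}|F_j|^2$ and lacunarity makes that summable in $k$; but already the quadratic contribution involves coefficients of the convolution $F*F$, which need not lie in $\ell^2$ when $F$ is merely square summable, and \eqref{convabhyp} constrains $\phi_n^*\ti\phi_n$, not $\phi_n^2$, so it supplies no obvious cancellation there. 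Likewise, the separation lemmas of Section \ref{sec:lpre} are pointwise statements valid only under strong hypotheses such as \eqref{abybhyp3}; they yield no integrable, summable-in-$k$ smallness of $A_nB_n$.

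The paper avoids any quantitative claim of this kind and argues softly by contradiction: if for some $\eta>0$ the set of $s$ where all four parameters $|A_{n_k}|,|B_{n_k}|,|\ti A_{n_k}|,|\ti B_{n_k}|$ exceed $\eta$ infinitely often had positive measure, then covering a compact positive-measure subset by Heisenberg-scale arcs at infinitely many separated scales, applying Lemmas \ref{fourlem} and \ref{unbalanced} to bound $\int_J\big(|\phi_{m_j}|-|\phi_{4m_j}|\big)^2$ from below on each arc, and feeding this through $\log x\le x-1$ and the subharmonicity estimate of Lemma \ref{lem:plancherel}, one extracts a fixed positive amount of $\sum\log(1+|F_l|^2)$ from each of infinitely many disjoint blocks of indices, contradicting $\|F\|_{\ell^2}<\infty$. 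To salvage your route you would need to replace the summability claim by an argument of this ``infinitely many disjoint blocks, each costing a fixed amount of $\|F\|_{\ell^2}^2$'' type; as written, the decisive step is missing.
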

If  $F_n$ is square
summable,
one can  relate measures $\mu \in \mathcal{T}_{-}$ to the $SU(2)$-valued nonlinear Fourier series, similarly 
to how one can relate measures $\mu \in \mathcal{T}_{+}$ and the associated OPUC  to the $SU(1,1)$-valued nonlinear Fourier series \cite{TaoThiele2012}.  For such $(F_n)$, define $\phi_n$ and $\ti \phi_n$ by \eqref{recursion phi} and \eqref{recursion ti phi} and initial condition $\phi_0 = \ti \phi_0 = 1$, and define the functions
\begin{equation}\label{eq:NLFT_defn_intro}
a_n (z):= \frac{\phi_n (z) + \ti \phi_n (z)}{2 z^n} \, , \quad  b_n ^* (z):=  \frac{\phi_n (z) - \ti \phi_n (z)}{2 z^n}\, .
\end{equation}
If  $F:=(F_j)$  is square summable, then $(a_n^*,b_n)$ has an $H^2 (\D) \times H^2 (\D)$ limit $(a,b)$, where the Hardy space $H^p(\D)$
is a space of analytic functions on $\D$ with $L^p$
extensions to $\T$. This limit $(a,b)$ is called the $SU(2)$-nonlinear Fourier series (NLFS) of the sequence $F$, see \cite{tsai} for details on the NLFS.
A celebrated open problem, called the nonlinear Carleson problem, is to determine whether $(a_n,b_n)$ converges almost everywhere to $(a,b)$. 
The paper \cite{Poltoratski} suggests the study of the simpler question of almost everywhere convergence of functionals of $(a_n,b_n)$ such as $|a_n|$.
In the $SU(2)$ setting, convergence of 
$\phi_{n_k}^*\ti\phi_{n_k}$
implies 
convergence of 
$|\phi_{n_k} (s)+\ti\phi_{n_k} (s) |^2 $
by \eqref{eq:ortho_polys_det2},
which implies convergence of $|a|$ by \eqref{eq:NLFT_defn_intro}.
However, in the present paper, we are not able to prove very strong results of convergence of
$\phi_{n_k}^*\ti\phi_{n_k}$ and instead address
the even simpler question of convergence of $(\phi_{n_k}^*\ti\phi_{n_k})^2$,
which can  be expressed
as a functional in $(a_{n_k},b_{n_k})$, as done in Theorem \ref{thm:su2}.
Unfortunately, not all
square summable sequences $F_n$
are covered by the techniques in this paper, as the second part of Theorem \ref{thm:su2} shows. The first part of Theorem \ref{thm:su2} gives  sufficient condition for square summable sequences in terms of the NLFS to provide measures $\mu\in \mathcal{T}_-$. In particular,
Theorem \ref{thm:su2} constructively provides an 
interesting class of measures $\mu \in \mathcal{T}_{-}$ to which all the previous Theorems \ref{fejerconvergence}-\ref{thm:lacunary} apply.

\begin{theorem}\label{thm:su2}
\begin{enumerate}[align=left, leftmargin=0pt, itemindent=1em, labelsep=0.2em, nosep]
    \item \label{thm:su2_part1}
  Let $b\in H^\infty (\D)$ vanish at the origin and satisfy
  \begin{equation}\label{su2hyp1}
      \|b\|_{H^\infty(\D)}<2^{-\frac 12}\, .
  \end{equation}
Let $a^*\in H^2(\D)$
   be outer such that for almost every $s\in \T$ 
 \begin{equation}\label{su2hyp2}
     |a(s)|^2+|b(s)|^2=1\, .
  \end{equation} 
   Then there exists
   $(F_n)\in \ell^2(\mathbb{N}_0)$ so that $(a,b)$ is the associated nonlinear Fourier series of $(F_n)$ and $(\phi_n,\ti \phi_n)$ defined by
\eqref{recursion phi}, \eqref{recursion ti phi} 
are the normalized left and right orthogonal polynomials of a measure $\mu \in \mathcal{T}_-$
satisfying \eqref{convabhyp}. Moreover, let $(n_k)$ be a lacunary sequence. For almost every $s\in \T$,
 \begin{equation}\label{su2con}
     \lim_{k\to \infty}
(a_{n_k}^*+b_{n_k})^2(a_{n_k}-b_{n_k}^*)^2(s)=(a^*+b)^2(a-b^*)^2(s)\, .
\end{equation} 
\item \label{thm:su2_part2}
Requirement \eqref{su2hyp1} is sharp, namely there exists an $(F_n)\in \ell^2(\mathbb{N}_0)$ whose associated NLFS $(a,b)$ satisfies $a^* \in H^2 (\D)$ is outer and
\[
\|b\|_{H^{\infty} (\D)} = 2^{- \frac 1 2} \, ,
\]
but the monic polynomials
$(\Phi_n,\ti \Phi_n)$ defined  by \eqref{eq:Szego_recur_intro} are not left and right orthogonal polynomials for any measure $\mu\in \mathcal{T}_-$.
\end{enumerate}
\end{theorem}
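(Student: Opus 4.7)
The plan is to take as explicit candidate measure $d\mu = w(s)\, d|s|/(2\pi)$ with density
\[
w(s) := \frac{1}{(a(s)+b^*(s))(a^*(s)-b(s))}.
\]
On $\T$ one has $|(a+b^*)(a^*-b)|^2 = (1+2\Re(ab))(1-2\Re(ab)) = 1-4(\Re(ab))^2$, and $|\Re(ab)| \le |b|\sqrt{1-|b|^2} \le 1/2$ with equality exactly when $|b|=2^{-1/2}$. Thus $\|b\|_{H^\infty}<2^{-1/2}$ makes $w$ bounded (so $\mu$ is a finite complex measure), while the extremal case permits $w$ to blow up on a positive-measure set. This dichotomy drives both parts.

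For Part (\ref{thm:su2_part1}), my scheme has four steps. \emph{(i) Inverse NLFT.} The bound $\|b\|_{H^\infty}<2^{-1/2}$ is equivalent to the Schur-type condition $\|b/a\|_{H^\infty}<1$ (using $|a|^2=1-|b|^2\ge 1/2$ on $\T$), so a layer-stripping inversion from the NLFT literature converges and yields $(F_n)\in\ell^2$ with NLFS $(a,b)$. \emph{(ii) Identifying $\mu$.} By induction on $n$, verify that $(\phi_n,\ti\phi_n)$ from \eqref{recursion phi}-\eqref{recursion ti phi} are the normalized one-sided orthogonal polynomials for $\mu$ and that $\mu\in\mathcal{T}_-$; the base case reduces to a Cauchy/contour integral identity $\int w\,d|s|/(2\pi)=1$ (using that $a^*$ is outer and $b(0)=0$), and the inductive step exploits the intertwining of the SU(2) NLFT recursion for $(a_n,b_n)$ with the defining formula \eqref{eq:NLFT_defn_intro}. \emph{(iii) Verifying \eqref{convabhyp}.} From \eqref{eq:NLFT_defn_intro}, $\phi_n^*\ti\phi_n = (a_n^*+b_n)(a_n-b_n^*)$; the SU(2) constraint $|a_n|^2+|b_n|^2=1$ gives uniform $L^\infty(\T)$ bounds $\le 1$ on each factor, and $H^2$-convergence $(a_n^*,b_n)\to(a^*,b)$ upgrades to $L^1$-convergence of the products to $(a^*+b)(a-b^*)=1/\overline{w}$ by dominated convergence. \emph{(iv) Applying Theorem \ref{thm:lacunary}.} All hypotheses are now verified, so along any lacunary $(n_k)$ and a.e.\ $s\in\T$, $(\phi_{n_k}^*\ti\phi_{n_k})^2(s)\to\overline{w(s)^{-2}}=((a^*+b)(a-b^*))^2(s)$, and unpacking via \eqref{eq:NLFT_defn_intro} yields \eqref{su2con}.

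For Part (\ref{thm:su2_part2}), I would construct an explicit $(F_n)\in\ell^2$ whose NLFS achieves $\|b\|_\infty = 2^{-1/2}$ with $a^*$ outer, arranged so that $w$ fails integrability on a positive-measure set $E\subset\T$ (e.g., via a boundary-extremal Schur--Pick pair supported on an arc). The obstruction argument proceeds by contradiction: if some $\mu\in\mathcal{T}_-$ had these polynomials, step (iii) above would still yield $\phi_n^*\ti\phi_n \to (a^*+b)(a-b^*)$ in $L^1(\T)$, and using the uniqueness of the $\mathcal{T}_-$ recursion together with Theorem \ref{fejerconvergence} one would force the absolutely continuous density of $\mu$ to equal $w$ on $\T\setminus E$ while preventing $\mu$ from being purely singular across most of $E$, which is incompatible with $|\mu|(\T)<\infty$ since $w\notin L^1$. \emph{Main obstacle.} Step (ii) of Part (\ref{thm:su2_part1}) is the crux: orchestrating the SU(2) NLFT recursion against the $\mathcal{T}_-$ Szeg\H o-type recursion is where the sign structure $F_n=-\ti F_n$ must be tracked, and verifying the base case $\int w\, d|s|/(2\pi)=1$ requires a delicate contour integral argument using the outer property of $a^*$ and $b(0)=0$. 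Part (\ref{thm:su2_part2}) also requires a sufficiently concrete extremal construction to pin down the non-integrability of $w$.
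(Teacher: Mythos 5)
Your Part~(\ref{thm:su2_part1}) follows the paper's route: the same candidate measure $d\mu=w\,d|s|/(2\pi)$ with $w=((a+b^*)(a^*-b))^{-1}$, the same use of outerness of $a^*$ and of $b(0)=0$ in a contour-integral argument, the same $L^1$ identification $\phi_n^*\ti\phi_n=(a_n^*+b_n)(a_n-b_n^*)\to(a^*+b)(a-b^*)=\overline{w}^{-1}$, and the same final appeal to Theorem~\ref{thm:lacunary}. The gap is exactly at the step you yourself flag as the crux: an induction on $n$ does not close. From the recursion \eqref{recursion phi}, to get $\langle\phi_{n+1},z^m\rangle_\mu=0$ you must control $\langle z\phi_n,z^m\rangle_\mu$ and $\langle z^n\ti\phi_n^*,z^m\rangle_\mu$. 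For $1\le m\le n$ these vanish by left-orthogonality of $\phi_n$ and right-orthogonality of $\ti\phi_n$, but for $m=0$ the first pairing is $\langle\phi_n,z^{-1}\rangle_\mu$, which left-orthogonality does not reach, and the second is $\langle z^n,\ti\phi_n\rangle_\mu$, which is \emph{nonzero}; their required cancellation is precisely the statement that $F_{n+1}$ is the correct recursion parameter of $\mu$, i.e.\ the thing to be proved. The paper avoids this by not inducting: it factors the NLFS as $G=G_nG_+$, obtains $\phi_n=z^na_+^*(a+b^*)+z^nb_+^*(b-a^*)$, and computes $\langle\phi_n,z^m\rangle_\mu$ for \emph{every} $m\le n$ as two contour integrals, the first vanishing because $a^*-b$ is zero-free on $\D$ (outerness plus $|a^*|>2^{-1/2}\ge|b|$ on $\D$) and the second because $b_+$ vanishes to order $n+1$ at the origin. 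You also omit the nondegeneracy check $\langle\Phi_n,z^n\rangle_\mu\ne0$ required by Lemma~\ref{unique} for membership in $\mathcal{T}$; the paper evaluates this pairing as $a_+^*(0)/a^*(0)>0$.

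Part~(\ref{thm:su2_part2}) is where your plan genuinely diverges and, as stated, would not go through. First, blow-up of $w$ on a positive-measure set is neither available nor needed: by your own identity $|w|^{-2}=1-4\Re(ab)^2$, blow-up requires $|b|=2^{-1/2}$ \emph{and} $\Im(ab)=0$ simultaneously, and non-integrability from isolated singularities already suffices. The paper's example is the one-term sequence $F_1=r$ with $r\nearrow1$: then $a=2^{-1/2}$ is constant (hence outer), $b=2^{-1/2}z$, and $w(s)=2/(1-r^2-2ri\Im(s))$ degenerates at $r=1$ to a density with non-integrable simple poles at $s=\pm1$ only. Second, your obstruction argument (``force the a.c.\ density to equal $w$ off $E$ while preventing $\mu$ from being purely singular across most of $E$'') is not a proof. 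The clean argument is a moment-uniqueness one: the orthogonality relations determine $\Lambda(s^m)$ for all $m$ by expanding $s^m$ in the one-sided orthogonal polynomials, so any candidate $\mu\in\mathcal{T}_-$ would have the same Fourier coefficients as the principal-value distribution $\Lambda=\lim_{r\nearrow1}w_r\,d|z|/(2\pi)$; since $\Lambda$ agrees away from $\pm1$ with the non-integrable density $w_1$, it is not a finite measure, and no such $\mu$ exists.
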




Examples of $b$ and $a$ as in the first part of Theorem \ref{thm:su2} are easy to come by, as constructing outer $a^*$ for given modulus on the boundary is standard \cite[Theorem 10]{QSP_NLFA}.
This class of $(a,b)$ and square summable sequences arose in recent
studies on quantum signal processing \cite{QSP_NLFA,QSP_NLFA_2}, see also \cite{low2017,Linlin}
for more on quantum signal processing.

Results related to but not solving the nonlinear Carleson problem can be found in
\cite{ChristKiselev,christkiselev2002, Oberlin2012, diogo, KovacDiogo, MuscaluTaoThiele,MuscaluCounterexample,Poltoratski}.  For a discussion of some open problems we refer to the survey \cite{diogosurvey}. This present paper is initially motivated by such convergence questions in the $SU(2)$ model and in particular echoes the approach of \cite{Poltoratski} for the $SU(1,1)$ model.

Similar to how Krein systems are the continuous analogs of OPUC \cite{Denisov}, our results for left and right polynomials  likely have a continuous analog. We also note that Krein systems are further connected to de Branges canonical systems \cite{romanov, Remling,Poltoratski2015ToeplitzAT}, which may be a helpful framework by which to understand our left and right polynomials.

\ \\

 {\bf Acknowledgments.}
 M.~A. and C.~T. were funded by the Deutsche Forschungsgemeinschaft (DFG, German Research Foundation) under Germany's Excellence Strategy -- EXC-2047/1 -- 390685813 as well as SFB 1060.

During an Arbeitsgemeinschaft at the Oberwolfach Institute in October 2024, Gennady Uraltsev suggested showing the nonlinear Carleson theorem along lacunary sequences. Our efforts towards this goal resulted in Theorem \ref{thm:lacunary}. The authors thank Gennady Uraltsev for many inspiring discussions, and the Oberwolfach Institute for hosting that event.


\section{Polynomials generated by a complex measure}\label{polynomials generated by a complex measure}
Let $\mu$ be a complex probability measure on $\T$, i.e., a complex measure with
\[
\int\limits_{\T} d \mu = 1 \, .
\]
Let $f$ and $g$ be polynomials in $z \in \T$.
Recalling definitions \eqref{defquasihilbert} and 
\eqref{stardef},  note 
\begin{equation}\label{mumubar}
    \langle f, g \rangle_{\mu}= \overline{\langle g, f \rangle_{\overline{\mu}}} \, ,
\end{equation} 
\begin{equation} \label{switchstar}
    \langle f, g \rangle_{\mu}=\langle g^*, f^* \rangle_{\mu} \, .
\end{equation}
 By \eqref{mumubar}, a polynomial is right orthogonal with respect to $\mu$ precisely if it is left orthogonal with respect to $\overline{\mu}$. 
 \begin{lemma}\label{unique}
    Let $\mu$ be a complex probability measure on $\T$ and assume that  for each $n\in \mathbb{N}_0$ there exists a left orthogonal monic polynomial $\Phi_n$ of degree $n$. The following statements are equivalent:
\begin{enumerate}
    \item \label{item:inner_prod_nonzero} For each $n\in \mathbb{N}_0$, 
\begin{equation}\label{unique1}
    \langle\Phi_n, z^n\rangle_\mu\neq 0\, .
    \end{equation}
\item \label{item:unique_poly} For each $n\in \mathbb{N}_0$, $\Phi_n$ is the unique degree $n$ left orthogonal monic polynomial.
\item \label{item:det_nonzer}  For each $n \in \mathbb{N}_0$, the matrix determinant
\begin{equation}\label{unique90}
\Delta_n := \det \begin{pmatrix}
    c_0 & c_{-1} & \ldots &  c_{-n} \\
   c_1 & c_0 & \ldots &  c_{-(n-1)}\\
    \vdots & \vdots & \ddots &  \vdots \\ 
   c_{n}  & c_{n-1} & \ldots &  c_0 
\end{pmatrix}
\end{equation}
is nonzero, where $c_{j} := \int\limits_{\T} z^j d\mu$ denotes the $j$-th moment of $\mu$.
\end{enumerate}
In particular, if any of the above hold, then for all $n \geq 1$, we have Heine's formula 
\begin{equation}\label{eq:Heine}
\Phi_n (z) = \frac{1}{\Delta_{n-1}} \det \begin{pmatrix}
   z^n & z^{n-1} & z^{n-2} & \ldots & z & 1 \\
   c_{1} & c_0 & c_{-1} & \ldots & c_{-(n-2)} & c_{-(n-1)} \\
   c_2 & c_1 & c_0 & \ldots & c_{-(n-3)} & c_{-(n-2)}\\
   \vdots & \vdots & \vdots & \ddots & \vdots & \vdots \\ 
   c_n & c_{n-1}  & c_{n-2} & \ldots & c_1 & c_0 
\end{pmatrix} \, .
\end{equation}
    \end{lemma}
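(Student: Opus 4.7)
My plan is to recast everything in terms of the Toeplitz linear system for the coefficients of $\Phi_n$ and derive Heine's formula as part of the argument, from which all three equivalences follow almost simultaneously. Writing a monic degree $n$ candidate as $\Phi_n(z) = z^n + \sum_{j=0}^{n-1} a_j z^j$ and using $(z^k)^* = z^{-k}$ on $\T$, the left-orthogonality conditions $\langle \Phi_n, z^k\rangle_\mu = 0$ for $k = 0, \ldots, n-1$ become
\[
\sum_{j=0}^{n-1} c_{j-k}\, a_j = -c_{n-k}, \qquad k = 0, \ldots, n-1,
\]
whose coefficient matrix is the transpose of the $n \times n$ Toeplitz matrix of determinant $\Delta_{n-1}$. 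Under the lemma's standing assumption that some $\Phi_n$ exists, the uniqueness of $\Phi_n$ is therefore equivalent to $\Delta_{n-1} \neq 0$, and taking this over all $n \geq 1$ together with $\Delta_0 = c_0 = 1$ (from $\mu$ being a probability measure) yields the equivalence of (2) and (3).

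Next, assuming (3), I would verify Heine's formula \eqref{eq:Heine} directly. Cofactor expansion of the right-hand determinant along the first row shows that the $(1,1)$-cofactor equals $\Delta_{n-1}$, so after dividing by $\Delta_{n-1}$ the formula produces a monic polynomial of degree $n$. For left orthogonality, multilinearity in the first row gives
\[
\int_\T \Phi_n(z)\, z^{-k}\, d\mu(z) = \frac{1}{\Delta_{n-1}}\det\begin{pmatrix} c_{n-k} & c_{n-1-k} & \cdots & c_{-k} \\ c_1 & c_0 & \cdots & c_{-(n-1)} \\ \vdots & \vdots & \ddots & \vdots \\ c_n & c_{n-1} & \cdots & c_0 \end{pmatrix},
\]
and for each $k \in \{0, \ldots, n-1\}$ the first row coincides with the $(n-k+1)$-st row, so the determinant vanishes. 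By the uniqueness already established, this polynomial agrees with the given $\Phi_n$.

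Setting $k = n$ in the same display yields the key identity
\[
\langle \Phi_n, z^n\rangle_\mu = \frac{\Delta_n}{\Delta_{n-1}},
\]
which makes (3) $\Rightarrow$ (1) immediate. Conversely, (1) $\Rightarrow$ (3) is an easy induction: $\Delta_0 = 1 \neq 0$, and if $\Delta_{n-1} \neq 0$ then Heine's formula already produces the unique $\Phi_n$, so $\Delta_n = \Delta_{n-1}\,\langle \Phi_n, z^n\rangle_\mu \neq 0$ by hypothesis.

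The main obstacle is mere bookkeeping of Toeplitz indices: namely, that the coefficient matrix of the linear system transposes to the matrix in \eqref{unique90} with $n$ replaced by $n-1$, and that the row-duplication used to verify orthogonality lines up correctly. Once these conventions are pinned down, everything reduces to a routine Cramer-type exercise.
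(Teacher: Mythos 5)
Your proof is correct. The treatment of (2)$\Leftrightarrow$(3) via the Toeplitz linear system and the verification of Heine's formula by multilinearity and row repetition are essentially identical to the paper's argument. Where you genuinely diverge is in how condition (1) enters: the paper proves (1)$\Leftrightarrow$(2) by direct polynomial manipulations (if some $\langle \Phi_j, z^j\rangle_\mu$ vanishes one exhibits $\Phi_{n+1}+\Phi_n$ as a second left orthogonal monic polynomial, and conversely a minimal counterexample to uniqueness produces a degree-$j$ left orthogonal polynomial with $\langle \Phi_j,z^j\rangle_\mu=0$), whereas you prove (1)$\Leftrightarrow$(3) through the Cramer-type identity $\langle \Phi_n, z^n\rangle_\mu = \Delta_n/\Delta_{n-1}$, obtained by setting $k=n$ in the same determinant computation used for orthogonality, together with an induction on $n$ for the converse direction. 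Your route is more unified, since the same determinant manipulation yields Heine's formula, left orthogonality, the nonvanishing of $\langle\Phi_n,z^n\rangle_\mu$, and the induction step all at once; the paper's route for (1)$\Leftrightarrow$(2) is more elementary in that it requires no determinants and isolates the purely algebraic mechanism by which non-uniqueness propagates between consecutive degrees. One point worth making explicit in your induction for (1)$\Rightarrow$(3): when $\Delta_{n-1}\neq 0$ you must identify the Heine polynomial with the \emph{given} $\Phi_n$ of the standing hypothesis before applying (1) to it, which follows from the uniqueness already secured at that stage; you gesture at this ("produces the unique $\Phi_n$") and it is correct, but it deserves a sentence.
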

\begin{proof}
    Let $\mu$, $\Phi_n$ be given as in the lemma.
We first show that \eqref{item:inner_prod_nonzero} and \eqref{item:unique_poly} are equivalent, by showing each fails if and only if and the other fails. Assume that \eqref{item:unique_poly} fails, i.e., there is $n\in \mathbb{N}_0$ such that
there exists a left orthogonal monic polynomial $\Psi_n\neq \Phi_n$.
We may assume $n$ is minimal for which such  $\Psi_n$ exists. 
 The nonzero polynomial $\Phi_n-\Psi_n$ is left orthogonal
to all polynomials of degree less than $n$. As difference of two monic polynomials, 
it has degree strictly less than $n$, call it $j$. Dividing by its highest coefficient, we obtain a left orthogonal monic polynomial  of degree $j$. By minimality of $n$, this polynomial is equal to $\Phi_j$. We conclude $\langle \Phi_j,z^j \rangle_\mu=0$ as $\Phi_n-\Psi_n$ is left orthogonal to all polynomials of degree less than $n$. Thus \eqref{item:inner_prod_nonzero} fails. 

Now assume that \eqref{item:inner_prod_nonzero} fails, i.e., there is some $n$ for which we have $\langle\Phi_n, z^n \rangle_\mu= 0\, .
$
Then $\Phi_n$ is left orthogonal
to all polynomials of degree $n$, since from
any such polynomial we can subtract a multiple of $z^n$ to obtain a polynomial of lower degree. Then $\Phi_{n+1}+\Phi_n$ 
is a left orthogonal
monic polynomial of degree $n+1$ different from $\Phi_{n+1}$, meaning  \eqref{item:unique_poly} fails. Thus \eqref{item:unique_poly} and \eqref{item:inner_prod_nonzero} are equivalent.

We now show \eqref{item:unique_poly} and \eqref{item:det_nonzer} are equivalent. We assume first that \eqref{item:det_nonzer} holds. Noting that $\Phi_0 = 1$ is unique monic polynomial of degree $0$, let $n \geq 1$. Let 
\begin{equation}\label{unique100}
   \Phi_n ^{k}(z) = z^n + \sum\limits_{j=0}^{n-1} x_j ^k z^j \, , 
\end{equation}
denote two left orthogonal polynomials for $k=1, 2$. Then 
\begin{equation}\label{unique101}
\langle \Phi_n ^k, z^j \rangle_{\mu} = 0 \, , \qquad 0 \leq j\leq n-1 \, .
\end{equation}
Expanding in terms of $(x_j ^k )$, this last condition is equivalent the linear system 
\begin{equation}\label{unique102}
\begin{pmatrix}
   c_0 & c_1 & \ldots & c_{n-1}  \\
   c_{-1} & c_0 & \ldots & c_{n-2}  \\
   \vdots & \vdots & \ddots & \vdots  \\
   c_{-(n-1)} & c_{-(n-2)} & \ldots & c_0  
\end{pmatrix} \begin{pmatrix}
   x_0  \\ x_1  \\ \vdots \\ x_{n-1}   
\end{pmatrix} = - \begin{pmatrix}
    c_n \\ c_{n-1} \\  \vdots \\ c_1
\end{pmatrix}
\end{equation}
having solutions $(x_j ^1)$ and $(x_j ^2)$. 
But the matrix on the left of \eqref{unique102} has nonzero determinant, and hence trivial kernel. Hence $(x_{j} ^1) = (x_j ^2)$, meaning $\Phi_n ^1 = \Phi_n ^2$. Thus \eqref{item:unique_poly} holds.

Now assume \eqref{item:unique_poly} holds, and let $n \geq 1$. Then writing our given left orthogonal polynomial 
\begin{equation}\label{unique103}
   \Phi_n (z) = z^n + \sum\limits_{j=0} ^{n-1} x_j z^j \, , 
\end{equation}
 we have that \eqref{unique102} has a solution. Then $\Delta_{n-1} \neq 0$ if and only if the matrix on the left of \eqref{unique102} has trivial kernel, which holds if and only if \eqref{unique102} has at most one solution. Let $(x_j ^1)$ and $(x_j ^2)$ denote two solutions. Then the polynomials defined in \eqref{unique100} must both be left orthogonal polynomials. By assumption, they must be equal, and hence $(x_j ^1) = (x_j ^2)$. Thus $\Delta_{n-1} \neq 0$. Thus \eqref{item:det_nonzer} holds.
 
We now prove \eqref{eq:Heine} holds. Let $\Psi_n$ denote the right side of \eqref{eq:Heine}, which is well-defined by \eqref{item:det_nonzer}. By cofactor expansion, $\Psi_n$ is monic of degree $n$. And as for left orthogonality, we note that
\[
\langle \Psi_n, z^j \rangle_{\mu} = \frac{1}{\Delta_{n-1}} \det \begin{pmatrix}
   c_{n-j} & c_{n-1-j} & c_{n-2-j} & \ldots & c_{-(j-1)} & c_{-j} \\
   c_{1} & c_0 & c_{-1} & \ldots & c_{-(n-2)} & c_{-(n-1)} \\
   c_2 & c_1 & c_0 & \ldots & c_{-(n-3)} & c_{-(n-2)}\\
   \vdots & \vdots & \vdots & \ddots & \vdots & \vdots \\ 
   c_n & c_{n-1}  & c_{n-2} & \ldots & c_1 & c_0 
\end{pmatrix}, 
\]
which vanishes if $0 \leq j \leq n-1$, because then the determinant is of a matrix with a repeated row. Thus $\Psi_n$ is a monic left orthogonal polynomial of degree $n$, and by \eqref{item:unique_poly}, it must equal $\Phi_n$. 
\end{proof}

\begin{lemma}\label{lem:Szego_rec_general}
    Let $\mu \in \mathcal{T}$. For each $n\in \mathbb{N}_0$, there exist unique constants $F_{n+1}, \ti F_{n+1} \in \C $ such that we have the Szeg\H o recurrence
    \eqref{eq:Szego_recur_intro}.
    \end{lemma}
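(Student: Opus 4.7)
The plan is to prove the first recurrence of \eqref{eq:Szego_recur_intro}; the second then follows by symmetry, since by \eqref{mumubar} the measure $\bar\mu$ lies in $\mathcal{T}$ with left- and right-orthogonal monic polynomials $\tilde\Phi_n$ and $\Phi_n$ respectively, so applying the first recurrence with $\mu$ replaced by $\bar\mu$ yields the second. For the first recurrence, set $P_n := \Phi_{n+1} - z\Phi_n$. Both $\Phi_{n+1}$ and $z\Phi_n$ are monic of degree $n+1$, so $\deg P_n \le n$; and $z^n \tilde\Phi_n^*$ is a polynomial of degree at most $n$ with constant term $1$, since $\tilde\Phi_n$ is monic of degree $n$. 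I would introduce the space
\[
W := \{f \in \C[z] : \deg f \le n,\ \langle f, z^k\rangle_\mu = 0 \text{ for } 1 \le k \le n\},
\]
show that $P_n$ and $z^n \tilde\Phi_n^*$ both belong to $W$, and then conclude $\dim W = 1$, which delivers the recurrence with a uniquely determined $F_{n+1}$.

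That $P_n \in W$ follows from $\langle \Phi_{n+1}, z^k\rangle_\mu = 0$ for $0 \le k \le n$ (left orthogonality of $\Phi_{n+1}$) together with $\langle z\Phi_n, z^k\rangle_\mu = \langle \Phi_n, z^{k-1}\rangle_\mu = 0$ for $1 \le k \le n$ (left orthogonality of $\Phi_n$). For $z^n\tilde\Phi_n^* \in W$, I would expand $\tilde\Phi_n = \sum_{j=0}^n a_j z^j$ with $a_n = 1$, so that $z^n \tilde\Phi_n^*(z) = \sum_{l=0}^n \bar a_{n-l} z^l$, and compute using \eqref{stardef} and the definition of the moments that
\[
\langle z^n \tilde\Phi_n^*, z^k\rangle_\mu = \int_\T z^{n-k} \tilde\Phi_n^* \, d\mu = \langle z^{n-k}, \tilde\Phi_n\rangle_\mu,
\]
which vanishes for $1 \le k \le n$, since $n-k \in \{0, \dots, n-1\}$ and $\tilde\Phi_n$ is right orthogonal.

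Finally, to see $\dim W = 1$, I would write $f = \sum_{j=0}^n b_j z^j$; membership in $W$ is equivalent to the linear system $\sum_{j=0}^n c_{j-k} b_j = 0$ for $k = 1, \dots, n$. The $n \times n$ submatrix obtained by keeping only the columns $j = 1, \dots, n$ is $(c_{j-k})_{k,j=1}^n$, whose determinant equals $\Delta_{n-1}$ from \eqref{unique90} up to transposition, and this is nonzero by the hypothesis $\mu \in \mathcal{T}$ and Lemma \ref{unique}. Hence the constraint matrix has full row rank $n$, so $\dim W = 1$. Since $z^n\tilde\Phi_n^* \in W$ has constant term $1$ and is therefore nonzero, it spans $W$; thus there is a unique scalar, which I name $\overline{F_{n+1}}$, with $P_n = \overline{F_{n+1}}\, z^n \tilde\Phi_n^*$. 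I do not anticipate a serious obstacle: the argument is essentially a dimension count, and the only care required is in the bookkeeping of the $*$ operation when verifying that $z^n \tilde\Phi_n^* \in W$, and in matching the constraint submatrix to the Toeplitz determinant $\Delta_{n-1}$.
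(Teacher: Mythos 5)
Your proof is correct, and all the delicate points check out: the verification that $z^n\ti\Phi_n^*\in W$ via $\langle z^n\ti\Phi_n^*,z^k\rangle_\mu=\langle z^{n-k},\ti\Phi_n\rangle_\mu$ is right, the constraint submatrix is indeed the transpose of the Toeplitz matrix defining $\Delta_{n-1}$, and the reduction of the second recurrence to the first via $\overline\mu$ is a clean formalization of the paper's remark that one ``switches the roles of left and right.'' The route differs from the paper's in how proportionality is established. The paper subtracts $\overline{F_{n+1}}z^n\ti\Phi_n^*$ (with $\overline{F_{n+1}}$ the constant coefficient of $\Phi_{n+1}-z\Phi_n$), observes the remainder lies in $\operatorname{Span}\{z,\dots,z^n\}=\operatorname{Span}\{z^n\ti\Phi_j^*:0\le j\le n-1\}$, expands in that basis, and kills each coefficient $d_\ell$ by pairing with $z^n\Phi_\ell^*$ and invoking $\langle\Phi_\ell,\ti\Phi_\ell\rangle_\mu\neq 0$, i.e.\ condition (1) of Lemma \ref{unique}. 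You instead identify the common solution space $W$ of the $n$ moment constraints and count its dimension using the nonvanishing of $\Delta_{n-1}$, i.e.\ condition (3) of Lemma \ref{unique}. Since Lemma \ref{unique} proves these conditions equivalent, the two arguments rest on the same foundation; yours is arguably more economical (a single rank computation replaces the term-by-term cancellation in the biorthogonal basis), while the paper's version makes explicit the biorthogonality mechanism that is reused elsewhere, e.g.\ in the reproducing-kernel computations. Either is acceptable.
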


\begin{proof}

 Because each $\ti \Phi_n$ is monic,
induction shows that for each $n\in \mathbb{N}_0$ we have
\begin{equation}\label{largespan}
\operatorname{Span} \{1, z, \ldots, z^{n} \}=
\operatorname{Span} \{ \ti \Phi_0,  \ti \Phi_1 , \ldots,   \ti \Phi_{n}  \} \, . \end{equation}
Applying the star operation \eqref{stardef} to \eqref{largespan} for $n-1$ and multiplying by $z^n$ gives
\begin{equation}\label{smallspan}
\operatorname{Span} \{z^n, z^{n-1}, \ldots, z^{1} \}=
\operatorname{Span} \{z^n \ti \Phi_0 ^*, z^n \ti \Phi_1 ^*, \ldots, z^{n}  \ti \Phi_{n-1} ^* \}.    
\end{equation}
Because both $\Phi_{n+1}$ and $z \Phi_n$ are monic polynomials of degree $n+1$,
  their difference is a polynomial of degree $n$. Let $F_{n+1}$ denote the coefficient of the constant term of this polynomial,
then
\begin{equation}\label{expanddifference}
\Phi_{n+1} - z \Phi_n -\overline{F_{n+1}} z^n \ti \Phi_n ^* = \sum\limits_{j=0}^{n-1} d_j z^n \ti \Phi_j ^* \, ,\end{equation}
because the left side is in the space  \eqref{smallspan}. 
Let $0 \leq \ell \leq n-1$ and pair \eqref{expanddifference}
from the right with $z^n \Phi^*_\ell$ to obtain
\begin{equation}\label{expand1}
\langle \Phi_{n+1},z^n\Phi^*_\ell\rangle_\mu
- \langle z \Phi_n ,z^n\Phi^*_\ell\rangle_\mu -\overline{F_{n+1}} \langle  z^n \ti \Phi_n ^* ,z^n\Phi^*_\ell\rangle_\mu = \sum\limits_{j=0}^{n-1} d_j 
\langle z^n \ti \Phi_j ^*,z^n\Phi^*_\ell\rangle_\mu  \, ,\end{equation}
The first term on the left side of \eqref{expand1} vanishes
because  $\Phi_{n+1}$ is left orthogonal and $z^{n}\Phi^*_\ell$ is a polynomial of degree less than $n+1$.  
Similarly, the second term vanishes because we can divide both factors in the pairing by $z$
and $z^{n-1}\Phi^*_\ell$
is a polynomial of degree less than $n$. The pairing in the third term can be rewritten with \eqref{switchstar} as  
\begin{equation}\label{expand2}
 \langle  z^n \ti \Phi_n ^* ,z^n\Phi^*_\ell\rangle_\mu
 =\langle   \ti \Phi_n ^* ,\Phi^*_\ell\rangle_\mu
 =\langle  \Phi_\ell,
 \ti \Phi_n \rangle_\mu=0\, ,
\end{equation}
where we have used right orthogonality of $\ti \Phi_n$.
By the analogous computation as \eqref{expand2}, most terms on the right side of \eqref{expand1} vanish and we obtain
\begin{equation}\label{expand3}
 0
 = d_\ell \langle  \Phi_\ell,
 \ti \Phi_\ell \rangle_\mu\, .
\end{equation}
By Lemma \ref{unique}, we obtain $d_\ell=0$. As $0\le \ell\le n-1$ was arbitrary,  identity \eqref{expanddifference} becomes the first Szeg\H o relation in \eqref{eq:Szego_recur_intro}.
The second relation is shown analogously, switching the roles of left and right. Uniqueness of $F_n$ and $F_{n+1}$ follows because the polynomials on the right side of the 
Szeg\H o relations \eqref{eq:Szego_recur_intro} are nonzero.

\end{proof}

In what follows, we employ the convention that a product indexed by the empty set equals $1$.
\begin{lemma}\label{lem:phiortho}
If $\mu\in \mathcal{T}_-$, then for all $n \geq 0$, we have 
\begin{equation}\label{eq:norm_poly_potential}
\langle \Phi_n , \ti \Phi_n \rangle_{\mu}  =  \prod\limits_{j=1}^n (1 + |F_j|^2)  \, . 
\end{equation}
 
\end{lemma}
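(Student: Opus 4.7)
The plan is to proceed by induction on $n$, the base case $n=0$ being immediate from $\Phi_0=\ti\Phi_0=1$ and $\int d\mu=1$. For the inductive step it suffices to show
\[
\langle \Phi_{n+1},\ti\Phi_{n+1}\rangle_\mu=(1+|F_{n+1}|^2)\langle \Phi_n,\ti\Phi_n\rangle_\mu.
\]
A useful first observation is that the right factor in the pairing can be replaced by a monomial: since $\ti\Phi_{n+1}-z^{n+1}$ has degree at most $n$, left orthogonality of $\Phi_{n+1}$ together with additivity in the second argument gives $\langle \Phi_{n+1},\ti\Phi_{n+1}\rangle_\mu=\langle \Phi_{n+1},z^{n+1}\rangle_\mu$, and identical reasoning yields $\langle \Phi_n,\ti\Phi_n\rangle_\mu=\langle \Phi_n,z^n\rangle_\mu$.

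Next I would apply the first Szeg\H o recurrence \eqref{eq:Szego_recur_intro} to $\Phi_{n+1}$ and repeatedly use the identity $\langle z^kf,z^kg\rangle_\mu=\langle f,g\rangle_\mu$ (which follows from $(z^k)^*=z^{-k}$), obtaining
\[
\langle \Phi_{n+1},z^{n+1}\rangle_\mu=\langle \Phi_n,z^n\rangle_\mu+\overline{F_{n+1}}\langle \ti\Phi_n^*,z\rangle_\mu=\langle \Phi_n,\ti\Phi_n\rangle_\mu+\overline{F_{n+1}}\langle \ti\Phi_n^*,z\rangle_\mu.
\]
Everything therefore reduces to the key identity $\langle \ti\Phi_n^*,z\rangle_\mu=F_{n+1}\langle \Phi_n,\ti\Phi_n\rangle_\mu$.

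To prove this identity, I would invoke right orthogonality of $\ti\Phi_{n+1}$, which via $\langle 1,\ti\Phi_{n+1}\rangle_\mu=0$ is the statement $\int\ti\Phi_{n+1}^*\,d\mu=0$. Applying the star operation (which is conjugate linear on scalars) to the second Szeg\H o recurrence and inserting the defining relation $\ti F_{n+1}=-F_{n+1}$ of $\mathcal{T}_-$ yields
\[
\ti\Phi_{n+1}^*=z^{-1}\ti\Phi_n^*-F_{n+1}\,z^{-n}\Phi_n.
\]
Integrating against $\mu$ gives $\int z^{-1}\ti\Phi_n^*\,d\mu=F_{n+1}\int z^{-n}\Phi_n\,d\mu$; the left side equals $\langle \ti\Phi_n^*,z\rangle_\mu$, while the right side equals $F_{n+1}\langle \Phi_n,z^n\rangle_\mu=F_{n+1}\langle \Phi_n,\ti\Phi_n\rangle_\mu$ by the reduction above. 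This closes the induction.

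The only delicate point is the key identity, and it is precisely here that the hypothesis $\mu\in\mathcal{T}_-$ enters. For a general $\mu\in\mathcal{T}$, the same argument produces the factor $-\overline{F_{n+1}}\ti F_{n+1}$ in place of $|F_{n+1}|^2$; only the relation $\ti F_{n+1}=-F_{n+1}$ converts this into the manifestly nonnegative quantity $|F_{n+1}|^2$, which in turn explains the positivity asserted in \eqref{normPhi}.
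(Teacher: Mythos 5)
Your proof is correct, and it takes a genuinely different route from the paper's. The paper writes the Szeg\H o recurrence as the matrix product \eqref{eq:pre_Szego_recursion}, takes determinants to obtain the \emph{pointwise} identity $\Phi_n\Phi_n^*+\ti\Phi_n\ti\Phi_n^*=2\prod_{j\le n}(1+|F_j|^2)$, integrates against $\mu$, and then uses orthogonality against the degree-$\le n-1$ polynomial $\ti\Phi_n-\Phi_n$ to identify the three pairings $\langle\Phi_n,\Phi_n\rangle_\mu=\langle\Phi_n,\ti\Phi_n\rangle_\mu=\langle\ti\Phi_n,\ti\Phi_n\rangle_\mu$. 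You instead run a direct induction on the scalar quantity $\langle\Phi_n,\ti\Phi_n\rangle_\mu$, reducing everything to the identity $\langle\ti\Phi_n^*,z\rangle_\mu=F_{n+1}\langle\Phi_n,\ti\Phi_n\rangle_\mu$, which you extract from right orthogonality $\langle 1,\ti\Phi_{n+1}\rangle_\mu=0$ combined with the starred second Szeg\H o recurrence; I checked the star and shift manipulations and they are all valid, and there is no circularity since Lemma \ref{lem:Szego_rec_general} precedes this lemma. Your argument is the exact analogue of the classical OPUC computation expressing the norm in terms of Verblunsky-type coefficients, and it has the virtue of isolating precisely where the hypothesis $\ti F_{n+1}=-F_{n+1}$ enters (turning $1-\overline{F_{n+1}}\ti F_{n+1}$ into $1+|F_{n+1}|^2$); your closing remark correctly gives the general-$\mathcal{T}$ factor. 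What the paper's route buys in exchange is the pointwise determinant identity \eqref{eq:det_polys}, which after normalization becomes \eqref{eq:ortho_polys_det2} and is used repeatedly later, as well as the equality of the three pairings \eqref{eq:norms_Phi_inner_prod}; your induction does not produce these byproducts, but as a proof of the stated lemma it is complete.
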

\begin{proof}
    From the Szeg\H o recurrence \eqref{eq:Szego_recur_intro}, it follows that for  $n \geq 0$, we have
\begin{equation}\label{eq:pre_Szego_recursion}
\begin{pmatrix}
    \frac{\Phi_{n+1}}{z^{n+1}} & - \left ( \frac{ \ti \Phi_{n+1}}{z^{n+1}} \right )^* \\ \frac{\ti \Phi_{n+1}}{z^{n+1}} & \left ( \frac{\Phi_{n+1}}{z^{n+1}} \right )^*
\end{pmatrix} = \begin{pmatrix}
    \frac{\Phi_{n}}{z^{n}} & - \left ( \frac{\ti \Phi_{n}}{z^{n}} \right )^* \\ \frac{\ti \Phi_{n}}{z^{n}} & \left ( \frac{\Phi_{n}}{z^{n}} \right )^*
\end{pmatrix} \begin{pmatrix}
    1 &  F_{n+1} z^{n+1}\\
    - \overline{F_{n+1}} z^{-(n+1)} & 1 
\end{pmatrix} \, .
\end{equation}
Taking determinants and applying induction with 
the initial condition $\Phi_0 = \ti \Phi_0 =1$, which follows from the monic assumption, gives for  $n \geq 0$,
\begin{equation}\label{eq:det_polys}
\Phi_n \Phi_n ^* + \ti \Phi_n (\ti \Phi_n) ^* = 2 \prod\limits_{j=1}^n (1 + |F_j|^2) \, .
\end{equation}
Integrating \eqref{eq:det_polys} with respect to the probability measure $\mu$ on $\T$ yields
\begin{equation}\label{eq:Phi_det_integrate}
\langle \Phi_n ,  \Phi_n \rangle_{\mu}+
\langle \ti \Phi_n , \ti \Phi_n \rangle_{\mu}
 = 2 \prod\limits_{j=1}^n (1 + |F_j|^2) \, .
\end{equation}
But $\Phi_n$ and $\ti \Phi_n$ are left and right orthogonal, respectively, to the polynomial $\ti \Phi_n - \Phi_n$ because it has degree at most $n-1$, and therefore
\begin{equation}\label{eq:norms_Phi_inner_prod}
\langle \Phi_n ,  \Phi_n \rangle_{\mu} = \langle \Phi_n , \ti \Phi_n \rangle_{\mu}  = 
\langle \ti \Phi_n , \ti \Phi_n \rangle_{\mu} 
\end{equation}
Identities \eqref{eq:Phi_det_integrate} and \eqref{eq:norms_Phi_inner_prod} yield
\eqref{eq:norm_poly_potential}, which proves the lemma.
\end{proof}
With Lemma \ref{lem:phiortho} and left-orthogonality of $\Phi_n$, we have
\[
\langle \Phi_n, \ti \Phi_n \rangle_{\mu} =  \langle \Phi_n, z^n \rangle_{\mu} \neq 0 \, .
\]
Thus we define the normalized polynomials
 \begin{equation}\label{defphi}
     \phi_n = \frac{\Phi_n}{\sqrt{\langle \Phi_n , \ti \Phi_n \rangle_{\mu}}} \, ,  \qquad \ti \phi_n = \frac{\ti \Phi_n}{\sqrt{\langle \Phi_n , \ti \Phi_n \rangle_{\mu}}}\, .
 \end{equation}
 They inherit from $\Phi_n$, $\ti \Phi_n$ and Lemma \ref{lem:phiortho} the orthonormality relation \eqref{orthonormal} thanks to the normalization of $\mu$ as probability measure, and using in addition 
 Lemma \ref{lem:Szego_rec_general}, we see that they also satisfy the recursion relations $\phi_0=\ti \phi_0=1$
  and 
\begin{equation}\label{recursion phi}
    \phi_{n+1} = \frac{1}{\sqrt{1+|F_{n+1}|^2}} \left( z\phi_n + z^n \overline{F_{n+1}} \ti \phi_n^* \right) \, ,
\end{equation}
\begin{equation}\label{recursion ti phi}
    \ti \phi_{n+1} = \frac{1}{\sqrt{1+|F_{n+1}|^2}} \left( z\ti \phi_n - z^n \overline{F_{n+1}} \phi_n ^* \right) \, .
\end{equation}
Rewriting the recursion \eqref{recursion phi}, \eqref{recursion ti phi} as a matrix product gives
\begin{equation}\label{phirecursion}
   \phimat_{n+1} (z) =\phimat_n (z) \zmat^{1-n} \fmat_{n+1} \zmat^{1+n}
\end{equation}
with the matrices
\begin{equation}
    \phimat_n (z) =2^{-\frac 12}
   \begin{pmatrix}
       \phi_{n} (z) & -\ti {\phi_{n}}^* (z) \\
       \ti \phi_{n} (z) & \phi_{n}^* (z)
    \end{pmatrix}
\end{equation}
\begin{equation}
    \fmat_n=\frac 1{\sqrt{1+|F_n|^2}}\begin{pmatrix} 1 & F_n\\ -\overline{F_n } & 1\end{pmatrix}\ ,
\end{equation}
\begin{equation}\label{defzmat}
    \zmat = \begin{pmatrix} z^{\frac 1 2} & 0\\ 0 & z^{-\frac 12} \end{pmatrix}\ .
\end{equation}
The square root of $z$ in the definition of $\zmat$ is to be understood formally in the sense that
$\zmat$ will always appear 
with an accumulated even power
in a product, so that the resulting product can be rewritten
involving only integer powers of $z$.

With the Hadamard gate
\begin{equation}
    \hmat= 2^{-\frac 12}\begin{pmatrix}
       1 & -1\\
       1 & 1
    \end{pmatrix}\, ,
\end{equation}
we have the initial condition $\phimat_0=\hmat$ and thus can
solve the recursion \eqref{phirecursion} as
\begin{equation}\label{phiproduct}
    \phimat_n=\hmat \prod_{0<j\le n} ^{\curvearrowright}\zmat^{2-j} \fmat_j\zmat^{j}=\hmat \left ( \prod_{0<j\le n} ^{\curvearrowright} \zmat \fmat_j \right  )\zmat^{n} \, ,
\end{equation}
where we use the directed product notation
\[
\prod_{a< j\le b} ^{\curvearrowright} M_{j} :=  M_{a+1}\ldots M_{b-1} M_b \, .
\]
Because for $z\in \T$, we have $\hmat, \zmat, \fmat_j$ are all unitary with determinant $1$, then by \eqref{phiproduct} the product $\phimat_n$ must be as well. Taking the determinant of $\phimat_n$ thus yields for $s\in \T$, 
\begin{equation} \label{eq:ortho_polys_det2}
    |\phi_n (s) |^2+|\ti\phi_n (s) |^2=2\, .
\end{equation}
We note the following Plancherel type inequality
\begin{lemma}\label{lem:plancherel}
Let $l,m\in \mathbb{N}_0$ with $l<m$. We have   
\begin{equation}\label{plancon}
-2\int\limits_\T \log(\frac 12|\phi_l^*\phi_m+\ti \phi_l^* \ti \phi_m|)
\le \sum_{l<j\le m} \log(1+|F_j|^2)\, .
\end{equation}

\end{lemma}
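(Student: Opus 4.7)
The plan is to realize $\alpha_{l,m}(z) := \tfrac{1}{2}(\phi_l^*\phi_m + \ti\phi_l^*\ti\phi_m)$ as the $(1,1)$ entry of a concrete matrix product, recognize it as a polynomial in $z$ with a clean leading coefficient, and then conclude via Jensen's formula. From \eqref{phiproduct} one reads off $\det\phimat_n = 1$, so the inverse $\phimat_l^{-1}$ is its adjugate, and a direct multiplication matches $\alpha_{l,m}$ with the $(1,1)$ entry of $\phimat_l^{-1}\phimat_m$. Meanwhile, applying \eqref{phiproduct} twice telescopes to $\phimat_l^{-1}\phimat_m = \zmat^{-l}\bigl(\prod_{l<j\le m}^{\curvearrowright}\zmat\fmat_j\bigr)\zmat^m$.

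The main step is to extract polynomial structure from this telescoped product. I would factor $z^{-1/2}/\sqrt{1+|F_j|^2}$ out of each $\zmat\fmat_j$, leaving a matrix whose entries are polynomials of degree at most one in $z$. The accumulated $z^{-1/2}$ powers then combine with $\zmat^{-l}$ and $\zmat^m$ so as to cancel exactly at the $(1,1)$ position, yielding
\begin{equation*}
\alpha_{l,m}(z) \;=\; \frac{(M_{l,m})_{1,1}(z)}{\sqrt{\prod_{l<j\le m}(1+|F_j|^2)}}, \qquad M_{l,m}(z) \;:=\; \prod_{l<j\le m}^{\curvearrowright}\begin{pmatrix} z & z F_j \\ -\overline{F_j} & 1 \end{pmatrix}.
\end{equation*}
Expanding the matrix product as a sum over row-index paths, each factor contributes $z^{1}$ when accessed through its top row and $z^{0}$ through its bottom row, so $(M_{l,m})_{1,1}$ is a polynomial in $z$ of degree exactly $m-l$, whose leading coefficient is $1$ (realized by the unique path that remains in row $1$ throughout).

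Jensen's formula applied to this polynomial with leading coefficient $c := \prod_{l<j\le m}(1+|F_j|^2)^{-1/2}$ then gives
\begin{equation*}
\int_{\T}\log|\alpha_{l,m}| \;=\; \log|c| + \sum_{k:\,|z_k|>1}\log|z_k| \;\ge\; \log|c|,
\end{equation*}
where the $z_k$ are the zeros of $\alpha_{l,m}$ counted with multiplicity. Multiplying by $-2$ and using $\tfrac{1}{2}|\phi_l^*\phi_m + \ti\phi_l^*\ti\phi_m| = |\alpha_{l,m}|$ delivers exactly \eqref{plancon}. The main obstacle is the middle step: a priori $\alpha_{l,m}$ is only a Laurent polynomial, and its simplification to a genuine polynomial with such a clean leading coefficient is precisely where the matrix product structure of \eqref{phiproduct} pays off; after that, Jensen closes the argument immediately.
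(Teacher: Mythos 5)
Your proof is correct and follows essentially the same route as the paper: both telescope $\phimat_l^{-1}\phimat_m$ via \eqref{phiproduct}, identify $\frac 12(\phi_l^*\phi_m+\ti \phi_l^* \ti \phi_m)$ as a genuine polynomial carrying the normalization $\prod_{l<j\le m}(1+|F_j|^2)^{-1/2}$, and finish with a Jensen-type inequality. The only cosmetic difference is that you read off the leading coefficient and apply Jensen's formula, whereas the paper reads off the constant term at $z=0$ and uses subharmonicity of the logarithm of the modulus; the two normalizations give the same value and the two inequalities are the same estimate.
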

\begin{proof}
    We have with \eqref{phiproduct}
\begin{equation}\label{plan1}
\phimat_l^{-1}\phimat_m=\zmat^{-l}\left ( \prod_{l<j\le m} ^{\curvearrowright} \zmat \fmat_j \right )\zmat^m\, .
    \end{equation}
    The upper left entry 
\begin{equation}\label{u(z)=}
     u(z):=\frac 12 \left(\phi_l^*\phi_m+\ti \phi_l^* \ti \phi_m\right)(z)
\end{equation}    
   of the left side of 
\eqref{plan1} is a linear combination of monomials $z^k$ for $k \in \ZZ$. Counting powers of $z$
on the right side of \eqref{plan1}, each such monomial $z^k$ is a product of a factor $z^{-\frac l2}$, then $m-l$ factors of which each is  $z^{1/2}$ or $z^{-1/2}$ and then
one factor of $z^{m/2}$.
The lowest possible power $k$ is then $0$, meaning $u$ is a polynomial in $z$. The coefficient in front of $z^0$ can have only 
second row factors of the matrices $\zmat \fmat_j$
and thus is the product of lower right entries of the matrices $\zmat \fmat_j$. 
Evaluating $u$ at $0$ to obtain the constant term, we
have
\begin{equation}
    \frac 12 \left(\phi_l ^* \phi_m+\ti \phi_l ^* \ti \phi_m \right)(0)
=\prod_{l<j\le m} (1+|F_j|^2)^{-\frac 12}\, .
\end{equation}
Using that the 
polynomial  is analytic on $\mathbb{D}$ and 
thus  the logarithm of its modulus is subharmonic, we obtain
\begin{equation}\label{plan2}
\int\limits_\T \log(\frac 12 |\phi_l^*\phi_m+\ti \phi_l^* \ti \phi_m|)
\ge 
\log(\frac 12 |\phi_l^*\phi_m+\ti \phi_l^* \ti \phi_m|)(0)=
-\frac 12 \sum_{l<j\le m} \log(1+|F_j|^2)\, .
\end{equation}
This proves \eqref{plancon} and hence the lemma.
\end{proof}

\begin{lemma}[Christoffel-Darboux formula]\label{christoffeldarboux}
For all $n\in \mathbb{N}_0$ and all $\lambda \neq 0$, we have
\begin{equation}\label{eq:CD}
        (1-z \lambda^{-1}) K_n(z, \lambda) =
     { z^{n+1} \lambda}^{-n-1} \phi_{n+1}^*(z) {\ti \phi_{n+1}(\lambda)}    - \ti\phi_{n+1}(z) {\phi_{n+1}^*(\lambda)} \, .
    \end{equation}
\end{lemma}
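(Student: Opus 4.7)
The plan is to prove \eqref{eq:CD} by induction on $n$, using the Szeg\H o recurrence relations \eqref{recursion phi} and \eqref{recursion ti phi}. Denote the right-hand side of \eqref{eq:CD} by
\[
R_{n+1}(z,\lambda) := z^{n+1}\lambda^{-n-1}\phi_{n+1}^*(z)\ti\phi_{n+1}(\lambda) - \ti\phi_{n+1}(z)\phi_{n+1}^*(\lambda).
\]
The aim is to show the telescoping identity
\[
R_{n+1}(z,\lambda) - R_n(z,\lambda) = (1-z\lambda^{-1})\,\ti\phi_n(z)\phi_n^*(\lambda),
\]
since summing this from $n=-1$ upward, with the base case $R_0 = \phi_0^*(z)\ti\phi_0(\lambda) - \ti\phi_0(z)\phi_0^*(\lambda) = 1-1 = 0$ (matching $K_{-1}=0$), produces exactly $(1-z\lambda^{-1})K_n(z,\lambda)$ by the definition \eqref{reprokernel}.

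To carry this out, first apply the $*$ operation \eqref{stardef} to \eqref{recursion phi}, using that $*$ is conjugate-linear, multiplicative, involutive, and satisfies $(z^k)^* = z^{-k}$, to obtain
\[
\phi_{n+1}^*(z) = \frac{1}{\sqrt{1+|F_{n+1}|^2}}\bigl(z^{-1}\phi_n^*(z) + z^{-n}F_{n+1}\,\ti\phi_n(z)\bigr),
\]
and multiply this by $\ti\phi_{n+1}(\lambda)$ from \eqref{recursion ti phi}, then by $z^{n+1}\lambda^{-n-1}$. Symmetrically, multiply $\ti\phi_{n+1}(z)$ from \eqref{recursion ti phi} with the analogous expression for $\phi_{n+1}^*(\lambda)$. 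Both products carry a common factor $1/(1+|F_{n+1}|^2)$ and expand into four monomials in $\phi_n^*\ti\phi_n$, $\phi_n^*\phi_n^*$, $\ti\phi_n\ti\phi_n$, and $\ti\phi_n\phi_n^*$. Upon subtracting to form $R_{n+1}$, the two cross terms involving $F_{n+1}$ and $\overline{F_{n+1}}$ (the ones with $\phi_n^*\phi_n^*$ and $\ti\phi_n\ti\phi_n$) cancel exactly, while the two diagonal terms combine with coefficients $1$ and $|F_{n+1}|^2$, the sum $1+|F_{n+1}|^2$ cancelling the normalization. One is left with
\[
R_{n+1}(z,\lambda) = z^n\lambda^{-n}\phi_n^*(z)\ti\phi_n(\lambda) - z\lambda^{-1}\ti\phi_n(z)\phi_n^*(\lambda),
\]
and subtracting $R_n = z^n\lambda^{-n}\phi_n^*(z)\ti\phi_n(\lambda) - \ti\phi_n(z)\phi_n^*(\lambda)$ gives the claimed recursion.

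There is no real obstacle beyond careful bookkeeping of the eight monomial terms and their powers of $z$ and $\lambda$. The only structural point worth emphasizing is the joint cancellation: the coefficient $z^{n+1}\lambda^{-n-1}$ attached to the first term of $R_{n+1}$ is precisely what is needed for the cross terms from the two products to coincide and cancel under subtraction, while the remaining diagonal contributions line up to produce the factor $1+|F_{n+1}|^2$. Note this proof is independent of whether $\mu\in\mathcal{T}_+$ or $\mathcal{T}_-$: only the Szeg\H o recurrence is used, so Lemma \ref{christoffeldarboux} holds for all $\mu\in\mathcal{T}$.
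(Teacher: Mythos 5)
Your proof is correct: I checked the expansion of the eight monomials and the cross terms involving $F_{n+1}$ and $\overline{F_{n+1}}$ do cancel exactly, the diagonal terms combine to absorb the factor $1+|F_{n+1}|^2$, and the telescoping identity $R_{n+1}-R_n=(1-z\lambda^{-1})\ti\phi_n(z)\phi_n^*(\lambda)$ together with $R_0=0$ gives \eqref{eq:CD}. This is the same underlying strategy as the paper --- a telescoping sum whose increments are produced by the Szeg\H o recursion --- but the paper packages the computation differently: it writes $R_{n+1}$ (up to sign) as a trace $2\lambda^{-\frac{n+1}{2}}z^{\frac{n+1}{2}}\tr(\Lambda^{n+1}\phimat_{n+1}^{-1}(\lambda)\qmat\,\phimat_{n+1}(z)\zmat^{-n-1})$, peels one factor off the matrix product representation \eqref{phiproduct}, and uses invariance of the trace under conjugation to compute the consecutive difference. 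Your direct scalar expansion buys elementariness and transparency about exactly which terms cancel; the matrix version buys brevity once the formalism of Section \ref{polynomials generated by a complex measure} is in place, and avoids writing out the starred recursion by hand. One small caution about your closing remark: the normalized recursions \eqref{recursion phi}--\eqref{recursion ti phi}, with the \emph{same} coefficient $\overline{F_{n+1}}$ appearing with opposite signs in the two lines and with the normalization $\prod_j(1+|F_j|^2)^{-1/2}$, are specific to $\mathcal{T}_-$ (they rely on $\ti F_n=-F_n$ and on the positivity in \eqref{normPhi}); the exact cancellation of your two cross terms uses this sign relation, so the formula as stated should not be claimed for all of $\mathcal{T}$ without reworking the normalization and checking what replaces that cancellation.
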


\begin{proof}
Define for a complex parameter $\lambda$ the matrices
\begin{equation}
    \qmat:=\begin{pmatrix}
       0 & 1\\
       0 & 0
    \end{pmatrix}\, , \ 
\Lambda := \begin{pmatrix}
\lambda^{\frac 1 2} & 0 \\ 0 & \lambda^{ - \frac 1 2}
\end{pmatrix} \, ,
\end{equation}
where the remarks after the definition \eqref{defzmat} of $\zmat$ also apply analogous to $\Lambda$. Define 
\begin{equation}\label{kn1}L_n(z, \lambda)=2\lambda^{-\frac{n+1}2}z^{\frac{n+1}2}
\tr(\Lambda^{n+1}\phimat_{n+1}^{-1}(\lambda)\qmat \phimat_{n+1}(z) \zmat^{-n-1})
        \ .
\end{equation}  
The innermost triple product of matrices in \eqref{kn1} is
a tensor of the first column of $\phimat^{-1}_{n+1}(\lambda)$ with the second row of $\phimat_{n+1}(z)$ and thus 
\begin{equation}\label{kn10}
    \phimat_{n+1}^{-1}(\lambda)\qmat \phimat_{n+1}(z)=\frac 12\begin{pmatrix}
        \phi_{n+1}^*(\lambda)\ti \phi_{n+1}(z) & { \phi_{n+1}^*(\lambda)}\phi_{n+1}^*(z) \\
    -\ti \phi_{n+1}(\lambda) \ti \phi_{n+1}(z) & -{\ti \phi_{n+1}(\lambda)}\phi_{n+1}^*(z)
    \end{pmatrix}\, .
\end{equation}
With that we compute
$L_n(z, \lambda)$ to be the negative of the right side of \eqref{eq:CD}, and so we will be done once we verify that $L_n(z, \lambda)$ also coincides with the left side of \eqref{eq:CD}.
Note that $L_{-1}( z, \lambda)=0$.
Peeling off one factor in the product \eqref{phiproduct}, we observe
\begin{equation}\label{kn2}
    \Lambda^{n+1}\phimat_{n+1}^{-1}(\lambda)\qmat \phimat_{n+1}(z) \zmat^{-n-1}=
    \fmat_{n+1}^{-1}\Lambda^{-1}\Lambda^{n}\phimat_{n}^{-1}(\lambda)Q \phimat_{n}(z) \zmat^{-n}\zmat\fmat_{n+1}\, .
\end{equation}
  As the trace is invariant under conjugation by a matrix, we obtain\begin{equation}\label{kn3}L_n(z, \lambda)=2\lambda^{-\frac{n+1}2}z^{\frac{n+1}2}
    \tr(\Lambda^{n-1}\phimat_{n}^{-1}(\lambda)\qmat \phimat_{n}(z) \zmat^{-n+1})
        \ .
\end{equation}  
Comparing \eqref{kn3} with $n+1$ and \eqref{kn1} with $n$, we compute for $n\ge -1$
\begin{equation*}
    L_{n+1}(z, \lambda)-L_n(z, \lambda)=
\end{equation*}    
\begin{equation*} 
2 { \lambda^{-\frac{n+2}2}z^{\frac{n+2}2}}{\tr(\Lambda^{n}\phimat_{n+1}^{-1}(\lambda)\qmat \phimat_{n+1}(z) \zmat ^{-n})}   
\end{equation*}
\begin{equation*}
-2
{\lambda^{-\frac{n+1}2}z^{\frac{n+1}2}}
{\tr(\Lambda^{n+1}\phimat_{n+1}^{-1}(\lambda)\qmat \phimat_{n+1}(z) \zmat ^{-n-1})}
\end{equation*}
\begin{equation}\label{kn7}
=2
{\tr\left(
\left(
{\lambda^{-\frac{n+2}2}z^{\frac{n+2}2}} \operatorname{Id}-
{\lambda^{-\frac{n+1}2}z^{\frac{n+1}2}}\Lambda Z^{-1}
\right)
\Lambda^{n}\phimat_{n+1}^{-1}(\lambda)\qmat \phimat_{n+1}(z) \zmat ^{-n}\right)}
\end{equation}
by linearity of the trace and invariance of the trace under conjugation. 
Finally, because the diagonal matrix
\[
\lambda^{- \frac 1 2 } z^{\frac 1 2} \operatorname{Id} -  \Lambda \zmat^{-1}
\]
vanishes at its bottom right entry, then 
\eqref{kn7} becomes  the upper left entry of
\begin{equation}\label{kn8}
2\left(
{\lambda^{-\frac{n+2}2}z^{\frac{n+2}2}}-
{\lambda^{-\frac{n}2}z^{\frac{n}2}}
\right)
\Lambda^{n}\phimat_{n+1}^{-1}(\lambda)\qmat \phimat_{n+1}(z) \zmat ^{-n}
\end{equation}
which, by \eqref{kn10}, is equal to
\begin{equation}\label{kn80}
\left(
{\lambda^{-1}z-1}
\right)
\phi_{n+1} ^* (\lambda)\ti \phi_{n+1} (z) \, .
\end{equation}
By telescoping and then \eqref{reprokernel}, we obtain
\begin{equation} \label{kn9}
    L_n(z, \lambda)=\sum\limits_{\ell=0}^{n} L_{\ell}(z, \lambda) - L_{\ell-1}(z, \lambda)
    =\sum\limits_{\ell=0}^{n} (\lambda^{-1} z-1) \phi_{\ell} ^* (\lambda) \ti \phi_{\ell} (z)= \left(\lambda^{-1}z-1
\right)K_n(\lambda , z)\, ,
\end{equation}
which is the left side of \eqref{eq:CD}.
\end{proof}


The relation \eqref{eq:NLFT_defn_intro} can be written as
\begin{equation}\label{eq:polys_full_prod_formula}
    \gmat_n=\hmat^{-1} \phimat_n \zmat^{-2n} 
\end{equation}
with the matrix
\begin{equation}\label{define}
    \gmat_n=\begin{pmatrix}
    a_n & b_n \\ - b_n^* & a_n^*
\end{pmatrix}
\end{equation}
The product representation \eqref{phiproduct} then gives
\begin{equation}\label{gproduct}
    \gmat_n= \left ( \prod_{0<j\le n} ^{\curvearrowright}\zmat \fmat_j \right ) \zmat^{-n} \, ,
\end{equation}
or equivalently
\begin{equation}\label{gproduct1}
    \gmat_n= \prod_{0<j\le n} ^{\curvearrowright}\zmat ^j\fmat_j\zmat^{-j} \, .
\end{equation}
The latter is the representation of the $SU(2)$- nonlinear Fourier series in \cite{tsai},
\begin{equation}\label{tsai}
     G_n=\prod_{0<j\le n} ^{\curvearrowright} \frac{1}{\sqrt{1 + |F_{j}|^2}}\begin{pmatrix}
    1 & F_{j} z^{j} \\ -\overline{ F_{j}} z^{-j} & 1
\end{pmatrix}\, .
\end{equation}

In the proof of Theorem \ref{thm:su2}  we will refer to facts about the $SU(2)$-valued NLFS in  \cite{tsai, QSP_NLFA}, 
such as if $(F_n)\in \ell^2$, then 
$G_n$ has an entry-wise limit 
\[
G = \begin{pmatrix}
    a & b \\ -b^* & a^*
\end{pmatrix}
\]
in the sense of $L^2(\T)$ which turns out also in $H^2(\D)$
for the second column of $G_n$. We will also use that
\begin{equation}\label{eq:a_positive}
a_n ^* (0) \, , \qquad a^* (0) >0 \, .
\end{equation}

\section{Proof of Theorem \ref{fejerconvergence}}

Let $\mu \in \mathcal{T}_{-}$
and let $C\ge 2$. In what follows, $B(s,r)$ denotes the ball with radius $r$ and center $s$.
\begin{lemma}\label{lem:poly_bd_gen}
Let $n\ge 1$. The functions $f,g,h,k$ defined by
\begin{equation}\label{eq:poly_bds_gen}
f(z)=\phi_n(z) \, , \quad  g(z)=\tilde{\phi_n}(z) \, , \quad h(z)= z^n \phi_n^*(z) \, , \quad  k(z) = z^n \tilde{\phi_n} ^* (z) \, , 
\end{equation}
are polynomials of degree at most $n$
 and bounded by $\sqrt{2}$ on $\D$ and by $e^{2C}$ on the ball $B(0, 1 + C/n)$. 
\end{lemma}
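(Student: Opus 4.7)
The plan is to establish the three claims in sequence, relying principally on \eqref{eq:ortho_polys_det2} together with the maximum modulus principle applied in two stages.

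\textbf{Step 1: polynomial degree.} First I would verify that each of $f,g,h,k$ is a polynomial of degree at most $n$. The functions $\phi_n$ and $\ti\phi_n$ are normalizations of the monic degree-$n$ polynomials $\Phi_n,\ti\Phi_n$ by \eqref{defphi}, so they are polynomials of degree exactly $n$. For $h$ and $k$, writing $\phi_n(z)=\sum_{j=0}^n a_j z^j$ gives $z^n \phi_n^*(z)=\sum_{j=0}^n \overline{a_j}\, z^{n-j}$ by \eqref{stardef}, a polynomial of degree at most $n$, and analogously for $k$.

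\textbf{Step 2: bound by $\sqrt 2$ on $\mathbb D$.} On $\T$, identity \eqref{eq:ortho_polys_det2} yields $|\phi_n(s)|\le \sqrt 2$ and $|\ti\phi_n(s)|\le \sqrt 2$ for every $s\in\T$. Since for $s\in\T$ the star operation coincides with complex conjugation, we also have $|h(s)|=|s^n\phi_n^*(s)|=|\phi_n(s)|\le \sqrt 2$ and similarly $|k(s)|\le\sqrt 2$. As all four functions are polynomials, they are holomorphic on $\D$, and the maximum modulus principle extends the $\T$-bound to all of $\overline{\D}$.

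\textbf{Step 3: bound by $e^{2C}$ on $B(0,1+C/n)$.} Here I would apply maximum modulus in the other direction. For any polynomial $P$ of degree at most $n$ with $\sup_{\T}|P|\le \sqrt 2$, the function $P(z)/z^n$ is holomorphic on $\C\setminus\{0\}$ and bounded as $|z|\to\infty$ (the limit is the leading coefficient). Applying the maximum modulus principle on the annulus $\{1\le |z|\le R\}$ and letting $R\to\infty$, we obtain $|P(z)/z^n|\le \sqrt 2$ for all $|z|\ge 1$, so $|P(z)|\le \sqrt 2\, |z|^n$ for $|z|\ge 1$. For $|z|\le 1+C/n$ this gives
\[
|P(z)|\le \sqrt 2\,(1+C/n)^n\le \sqrt 2 \, e^{C}\le e^{2C},
\]
using $\ln(1+x)\le x$ and $\sqrt 2\le e^{C}$ since $C\ge 2$. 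Combining this with the bound from Step 2 on $\overline{\D}$ gives the claim.

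There is no genuine obstacle here: the only mild point is ensuring the constant $\sqrt 2 e^C$ fits inside $e^{2C}$, which is where the hypothesis $C\ge 2$ (in fact anything with $C\ge \tfrac12\log 2$ would do) is used.
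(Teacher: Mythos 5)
Your proposal is correct and follows essentially the same route as the paper: degree count, the determinant identity \eqref{eq:ortho_polys_det2} plus the maximum principle for the bound $\sqrt2$ on $\overline{\D}$, and then the growth bound $|P(z)|\le\sqrt2\,|z|^n\le\sqrt2\,(1+C/n)^n\le e^{2C}$ outside the disc. The only cosmetic difference is in the last step: the paper transfers the exterior bound on $f$ to the interior bound on its partner $h$ via the reflection $z\mapsto\overline{z^{-1}}$, whereas you derive $|P(z)/z^n|\le\sqrt2$ for $|z|\ge1$ directly from the boundary values (for which one should note that the limit at infinity, the leading coefficient, is itself at most $\sqrt2$ by Cauchy's estimate, so the outer boundary of your annulus causes no loss); these are two phrasings of the same fact.
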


\begin{proof}
Because $\phi_n$ and $\ti \phi_n$ are polynomials of degree $n$, then $f,g, h, k$ are polynomials of degree at most $n$. 
By \eqref{eq:ortho_polys_det2}, the polynomials 
    $f, g, h, k$
 are bounded by $\sqrt{2}$ for $z$ on $\T$ and by the maximum
principle also for $z$ in the unit disc $\D$.
Now assume  $1 \leq |y| \leq 1 + \frac{C}{n}$. Then, with $z=\overline{y^{-1}} \in \D$, and using the bound of $|h(z)| \leq \sqrt{2}$, we have
\[
|f(y)| = |\phi_n(y)|= |\phi_n^*(z)| =  |y^n z^n \phi_n^*(z)|
 = |h(z)y ^n |\leq \sqrt{2}|y|^n \leq \sqrt{2} \left ( 1 + \frac{C}{n} \right )^n \leq  e^{2C} \, .
\]
This along with a similar argument for $g, h, k$, using the bound of $k$, $f$, $g$ by $\sqrt{2}$ on $\mathbb{D}$,   respectively, completes the proof of the lemma.
\end{proof}

\begin{lemma}\label{lem:kbounds}
    Let $s\in \T$
     and let $n  \geq 2 C$. For all $x \in B(0, 1 + C/n)$ and $y\in B(s,C/n)$,  we have the estimate  \begin{equation}\label{kernel estiamte K}
        |K_n(x, y)| \leq e^{12C} \min (n+1,|s-x|^{-1}) \, . 
    \end{equation}
    
    \end{lemma}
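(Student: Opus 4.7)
The plan is to combine the Christoffel--Darboux formula (Lemma \ref{christoffeldarboux}) with the polynomial bounds from Lemma \ref{lem:poly_bd_gen}, splitting the argument according to whether $|s-x| \le 2C/n$ or $|s-x| > 2C/n$.

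In the regime $|s-x| \le 2C/n$, I target the bound $|K_n(x,y)| \le e^{12C}(n+1)$. Expanding \eqref{reprokernel} and applying the triangle inequality gives $|K_n(x,y)| \le \sum_{j=0}^n |\tilde{\phi}_j(x)|\,|\phi_j^*(y)|$. Since $B(0,1+C/n) \subseteq B(0,1+C/j)$ for each $0\le j\le n$, Lemma \ref{lem:poly_bd_gen} applied in degree $j$ yields $|\tilde{\phi}_j(x)| \le e^{2C}$. For $|\phi_j^*(y)|$ on $B(s,C/n)$, I would factor as $\phi_j^*(y) = y^{-j}\cdot (y^j\phi_j^*(y))$; the bracketed polynomial has degree $j$ and is bounded by $e^{2C}$ via Lemma \ref{lem:poly_bd_gen}, while $|y|^{-j} \le (1-C/n)^{-j} \le e^{2C}$ using $n\ge 2C$. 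Summing over $j$ produces the desired factor of $n+1$.

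In the complementary regime $|s-x|>2C/n$, I would apply the Christoffel--Darboux formula of Lemma \ref{christoffeldarboux} to write
\[
K_n(x,y) = \frac{y^{-(n+1)}\bigl(x^{n+1}\phi_{n+1}^*(x)\bigr)\tilde{\phi}_{n+1}(y) - \tilde{\phi}_{n+1}(x)\phi_{n+1}^*(y)}{1 - x/y}\, .
\]
Each of $\tilde{\phi}_{n+1}$, $z\mapsto z^{n+1}\phi_{n+1}^*(z)$, and $y\mapsto y^{n+1}\phi_{n+1}^*(y)$ is a polynomial of degree at most $n+1$ admitting a bound of the form $e^{\text{const}\cdot C}$ on $B(0,1+C/n)$; this follows from a mild extension of Lemma \ref{lem:poly_bd_gen} using $(1+C/n)^{n+1} \le e^{C+C/n} \le e^{2C}$, which is valid under $n\ge 2C$. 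Combined with $|y^{-(n+1)}| \le (1-C/n)^{-(n+1)} \le e^{2C}$, the entire numerator is bounded by $e^{\text{const}\cdot C}$. For the denominator, $|y| \le 1+C/n \le 2$, while the triangle inequality and the regime assumption give $|y-x| \ge |s-x| - |s-y| \ge |s-x|/2$, so $|1-x/y| \ge |s-x|/4$. Combining yields $|K_n(x,y)| \le e^{\text{const}\cdot C}/|s-x|$.

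The only obstacle is careful bookkeeping of constants to land at exactly $e^{12C}$; conceptually, nothing beyond the Christoffel--Darboux identity and the elementary polynomial growth bounds of Lemma \ref{lem:poly_bd_gen} is required. The need to pass Lemma \ref{lem:poly_bd_gen} from degree $n$ to degree $n+1$ on the ball of radius $1+C/n$ is a purely cosmetic modification of its original proof.
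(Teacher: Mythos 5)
Your proof is correct and follows essentially the same route as the paper's: the termwise bound on the sum \eqref{reprokernel} via Lemma \ref{lem:poly_bd_gen} gives the $(n+1)$ bound, and the Christoffel--Darboux formula with the same polynomial bounds (extended to degree $n+1$) together with $|y-x|\ge |s-x|/2$ gives the $|s-x|^{-1}$ bound in the far regime. The one point to make explicit is that in the regime $|s-x|\le 2C/n$ the two quantities in the minimum are comparable, since $n+1\le 2n\le 4C|s-x|^{-1}\le e^{2C}|s-x|^{-1}$, so the $(n+1)$ bound alone yields the stated minimum there -- which is exactly the reduction the paper performs.
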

    \begin{proof}
     The representation \eqref{reprokernel} of $K_n$ as sum of $n+1$ products of normalized one-sided orthogonal polynomials and Lemma \ref{lem:poly_bd_gen} give the estimate 
\begin{equation}\label{kernel estiamte K first}
        |K_n(x, y)| \leq  e^{4C} (n+1)\, . 
    \end{equation}
To verify \eqref{kernel estiamte K}, it remains to show
\begin{equation}\label{kernel estiamte K 2}
        |K_n(x, y)| \leq   {e^{12C}}{|s-x|^{-1}} \, .  
    \end{equation}
    If 
    \[
    |s-x| \leq 2C n^{-1} \, ,
    \]
    then \eqref{kernel estiamte K first} implies \eqref{kernel estiamte K 2}. Thus it suffices to show \eqref{kernel estiamte K 2} when  
\begin{equation}\label{cd3}
 |s-x|\ge {2C}n^{-1} \, . 
\end{equation}
  The Christoffel-Darboux formula \eqref{eq:CD} gives for $K_n(x,y)$ the estimate
\begin{equation}\label{darbouxxy}
    |y^n (y-x)| | K_n (x,y)| = |x^{n+1}\ti\phi_{n+1}(y) \phi_{n+1}^*(x)  - y^{n+1}\ti\phi_{n+1}(x){\phi_{n+1}^*(y)}| \leq 2 e^{8C} \, ,
\end{equation}
where in the last step we apply Lemma \ref{lem:poly_bd_gen} with index $n+1$ and constant $2C$.
The first term on the left of \eqref{darbouxxy},
multiplied by $e^{3C}$, we estimate from below with \eqref{cd3}
\begin{equation*}
 e^{3C}\left |y^n(x-y) \right | \ge e^C(1+2Cn^{-1})^n(1-Cn^{-1})^n ||y-s|-|x-s||\, . 
\end{equation*}
Noting that $2|y-s|\le |x-s|$ by \eqref{cd3}, we have the lower bound 
\begin{equation} \label{eq:CD_denom_estimate}
  e^{3C}\left |y^n(x-y) \right | \ge (1+Cn^{-1}-2(Cn^{-1})^2)^n  \left |s-x \right |
 \ge \left |s-x \right |\, ,\end{equation}
 where in the last step we used $n\ge 2C$.
The estimates \eqref{darbouxxy} and \eqref{eq:CD_denom_estimate} show \eqref{kernel estiamte K 2} and complete the proof of the lemma.




\end{proof}

We now prove Theorem \ref{fejerconvergence}. Assume $s\in E(\mu)$. Let $n \geq 2C$ and $z, \lambda \in B(s, \frac{C}{n})$. 
Thanks to \eqref{mumubar}
and \eqref{switchstar}, the 
polynomials $\phi_n$ and $\ti \phi_n$ switch roles of left and right orthogonal polynomials when passing from
$\mu$ to $\overline{\mu}$. With the definition \eqref{reprokernel} of the reproducing kernel we thus obtain
for the  kernel $\ti K_n$ associated to $\overline{\mu}$:
\begin{equation}
    \ti K_n (z, \lambda) = {K_n^*(\lambda, z)}\, .
\end{equation}
The reproducing  property \eqref{reproducing}, first for $\ti K_n$ and then $D_n$, yields 
\begin{equation}\label{cd10}
   {D_n (z, \lambda)} = { \int\limits_{\T}  {{D}_n (y, \lambda) }  \ti K_n ^* (y, z) d\overline{\mu} (y)} 
\end{equation}
\[
= \int\limits_{\T} K_n (z, y)  D_n (y,\lambda) \left ( d  \overline{ \mu} (y) - \overline{w(s)} \meas{y} \right )  +  \overline{w(s)} {\int\limits_{\T} {K_n (z,y)   D_n (y, \lambda)} \meas{y} }
\]
\[
= \int\limits_{\T} K_n (z,y)  {D}_n ( y, \lambda) \left ( d  \overline{\mu} (y) - \overline{w(s)} \meas{y} \right )  + \overline{w(s)} {K_{n} (z, \lambda)} \, .
\]
Rewriting the left side of \eqref{eq:KLbound}
with \eqref{cd10} and then estimating with Lemma \ref{lem:kbounds}, which analogously holds for $D_n$, gives\begin{equation}\label{firstthm1}
    \frac{1}{n+1}| \overline{w(s)} K_n (z, \lambda) - D_n (z, \lambda)|
    \end{equation}
    \begin{equation*}\leq  \frac{1}{n+1}\int\limits_{\T} |K_n (z,y)|  |D_n (y,\lambda)| \left | d \mu (y) - w(s) \meas{y} \right | 
\end{equation*}
\[
\leq e^{24C} \int\limits_{\T} \min \left \{ n+1, \frac{1}{(n+1)|y-s|^2} \right \}  \left | d \mu (y) - w(s) \meas{y} \right |  \, .
\]
This completes the proof of Theorem \ref{fejerconvergence}.

\section{Local parameters, first estimates}
\label{sec:lpfe}

Let $\mu\in \mathcal{T}_-$.
We prove some estimation and  approximation lemmas for local parameters $A_{n,s},B_{n,s},  \ti A_{n,s},\ti B_{n,s}$ 
of the normalized one sided orthogonal polynomials. These local parameters are  
sums and differences over shifts by a $2n$-th root of unity: define $A_{n,s}$ and $B_{n,s}$ as in \eqref{def:A_B_n}, and for $\gamma_n$ as in \eqref{gamman}, define the analogous
    \begin{equation}\label{def:ti A_B_n}
        \ti A_{n,s} := \frac{\ti \phi_n(s)-\ti \phi_n(s\gamma_n)}{2s^n} \, , \qquad 
    \ti B_{n,s} := \frac{\ti \phi_n(s)+\ti \phi_n(s\gamma_n)}{2} \, . 
    \end{equation}
Let $s\in E(\mu)$. As $s$ will be fixed throughout the section, 
we abbreviate notation by  omitting the subscript $s$ in
 \eqref{def:A_B_n} and 
 \eqref{def:ti A_B_n}.
Note that Lemma \ref{lem:poly_bd_gen} implies the bound \begin{equation}\label{eq:bd_A_B}
     |A_n|, |B_n |, |\ti A_n|, |\ti B_n |\leq \sqrt{2}   \, .  
    \end{equation}
 
The local parameters $A_n, B_n$ provide a local approximation of $\phi_n$ around $s$. 
\begin{lemma}[Local approximation of polynomials]\label{approximation with A B C D}
Let $C\geq 4$ and $4C\le n$ and $|z-s|\le C/n$. Then
    \begin{equation}\label{eq lemma abcd}
        \left| \phi_n(z) - A_nz^n-B_n \right|  \le   e^{36C} L(\mu,s,n)\, . 
    \end{equation}
\end{lemma}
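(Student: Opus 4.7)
The plan is to use the reproducing property to express $\phi_n(z)-A_n z^n-B_n$ as an integral against a localized kernel difference, and then to invoke Theorem \ref{fejerconvergence} together with a cancellation obtained from the orthogonality of $\phi_n$ against the Dirichlet kernel. First, $A_n z^n+B_n$ is the Lagrange interpolant of $\phi_n$ at the nodes $\{s,s\gamma_n\}$ in the two-dimensional basis $\{1,z^n\}$: since $\gamma_n^n=-1$, one may write $A_n z^n+B_n=L_s(z)\phi_n(s)+L_{s\gamma_n}(z)\phi_n(s\gamma_n)$ with $L_s(z)=\tfrac12(1+(z/s)^n)$ and $L_{s\gamma_n}(z)=\tfrac12(1-(z/s)^n)$; both are bounded by $e^C$ on $B(s,C/n)$.

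Applying the reproducing property \eqref{reproducing} to $\phi_n$ at each of $z,s,s\gamma_n$ yields
\[
\phi_n(z)-A_n z^n-B_n=\int_\T \phi_n(y)\,M(y,z)\,d\mu(y),
\]
where $M(y,z):=K_n^*(y,z)-L_s(z)K_n^*(y,s)-L_{s\gamma_n}(z)K_n^*(y,s\gamma_n)$. Define $N(y,z)$ by the same formula with $D_n^*$ replacing $K_n^*$. The key cancellation comes from orthogonality: left-orthogonality of $\phi_n$ combined with the orthonormality relation forces $\int_\T\phi_n(y)\,D_n^*(y,\lambda)\,d\mu(y)=\lambda^n/\overline d$, where $d$ is the leading coefficient of $\phi_n$; together with the identity $L_s(z)s^n+L_{s\gamma_n}(z)(s\gamma_n)^n=z^n$ (direct from $\gamma_n^n=-1$), this forces $\int_\T\phi_n(y)\,N(y,z)\,d\mu(y)=0$. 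Hence
\[
\overline{w(s)}\,[\phi_n(z)-A_n z^n-B_n]=\int_\T \phi_n(y)\,[\overline{w(s)}M(y,z)-N(y,z)]\,d\mu(y).
\]

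It remains to estimate the right-hand side. Split the $y$-integration at $|y-s|=C/n$. On the inner arc $\T\cap B(s,C/n)$, three applications of Theorem \ref{fejerconvergence} (with $\lambda\in\{z,s,s\gamma_n\}\subset B(s,C/n)$) give a pointwise bound $|\overline{w(s)}M(y,z)-N(y,z)|\leq(1+2e^C)(n+1)e^{30C}L(\mu,s,n)$; combined with $|\phi_n|\leq\sqrt 2$ and the estimate $|\mu|(\T\cap B(s,C/n))\lesssim|w(s)|C/n$ (valid since $s\in E(\mu)$), this part contributes at most $e^{O(C)}|w(s)|L(\mu,s,n)$. On the outer arc, Lemma \ref{lem:kbounds} together with its analog applied to the $\overline{\mu}$-kernel $\ti K_n$ (to which $K_n^*$ is related by the identity $\ti K_n(z,\lambda)=K_n^*(\lambda,z)$) produces the pointwise bounds $|K_n^*(y,\lambda)|\lesssim e^{12C}/|y-s|$ and $|D_n^*(y,\lambda)|\lesssim 1/|y-s|$; integrating against $|d\mu(y)-w(s)\,d|y|/(2\pi)|$ and matching the decay $|y-s|^{-1}$ with the weight $\min\{n+1,((n+1)|y-s|^2)^{-1}\}$ appearing in \eqref{defineL} yields the required bound by $L(\mu,s,n)$. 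Dividing by $|\overline{w(s)}|$ and tracking constants produces $e^{36C}L(\mu,s,n)$. The main obstacle is the bookkeeping of the star operation on the second argument of $K_n$ (which differs subtly from complex conjugation off $\T$) and the precise matching of pointwise kernel decay to the Lebesgue-differentiation weight defining $L$; the inner-region bound is essentially a direct corollary of Theorem \ref{fejerconvergence}.
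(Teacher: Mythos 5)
Your algebraic setup is correct and rather elegant: the interpolation identity $A_nz^n+B_n=L_s(z)\phi_n(s)+L_{s\gamma_n}(z)\phi_n(s\gamma_n)$, the representation $\phi_n(z)-A_nz^n-B_n=\int_\T\phi_n(y)M(y,z)\,d\mu(y)$, and the vanishing of $\int_\T\phi_n(y)N(y,z)\,d\mu(y)$ via left orthogonality together with $L_s(z)s^n+L_{s\gamma_n}(z)(s\gamma_n)^n=z^n$ all check out (modulo the star bookkeeping you flag). The proof breaks down at the outer-region estimate. Your integral is against $d\mu(y)$, and after subtracting the reference measure two problems appear. First, the piece $w(s)\int_\T\phi_n(y)\left[\overline{w(s)}M(y,z)-N(y,z)\right]\frac{d|y|}{2\pi}$ does not vanish and is not controlled: its $N$-part reproduces, with respect to Lebesgue measure, exactly the quantity $\phi_n(z)-A_nz^n-B_n$ you are trying to bound, while its $M$-part integrates the $\mu$-reproducing kernel against Lebesgue measure, which is not a quantity you control. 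Second, even for the piece integrated against $\left|d\mu(y)-w(s)\frac{d|y|}{2\pi}\right|$, the integrand $|\phi_n(y)|\cdot|\text{kernel}|$ decays only like $|y-s|^{-1}$, and $|y-s|^{-1}$ is \emph{not} dominated by the weight $\min\{n+1,((n+1)|y-s|^2)^{-1}\}$ in \eqref{defineL}: for $|y-s|\gg 1/n$ one has $|y-s|^{-1}=(n+1)|y-s|\cdot\frac{1}{(n+1)|y-s|^2}$, a loss of a factor as large as $n$. That weight is matched only by a \emph{product} of two kernels divided by $n+1$, which is exactly the structure used in the proof of Theorem \ref{fejerconvergence} via \eqref{cd10}; your representation has a single kernel factor (the other factor being the bounded function $\phi_n$), so the decay is insufficient.

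There is a second, independent problem: your final step divides by $|\overline{w(s)}|$. At this stage no lower bound on $|w(s)|$ is available — the bound $|w(s)|\ge 1$ of Lemma \ref{lem:lwr_bd_w} is deduced \emph{from} the present lemma — and $s\in E(\mu)$ only guarantees $|w(s)|<\infty$, so $w(s)$ could a priori be zero or tiny, making the division impossible or destroying the constant $e^{36C}$. The paper's proof avoids both issues by never introducing a new integral over $\T$: it applies Theorem \ref{fejerconvergence} only at the three pairs of nearby points $(\overline{z^{-1}},s)$, $(\overline{z^{-1}},s\gamma_n)$, $(s,s\gamma_n)$, converts the resulting kernel bounds into bilinear relations among $\phi_n$, $\ti\phi_n$ via the Christoffel--Darboux formula, and combines the three relations with coefficients chosen so that all $\overline{w(s)}$-dependent bracketed terms cancel identically, leaving precisely $\phi_n(z)-A_nz^n-B_n$ up to the admissible error. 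To salvage your route you would need a representation with two kernel factors, as in \eqref{cd10}, and an arrangement that cancels $\overline{w(s)}$ before any estimation.
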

\begin{proof}
Let $C$, $n$, and $z$ be given as in the lemma. Define $y:=\overline{z^{-1}}$. Note that 
\begin{equation}\label{upperlowerz}
  \frac 34\le |z|, |y|\le  \frac 43 \ .  
\end{equation}
Using the lower bound on $|z|$ and the fact that $|s|=1$, we estimate
\begin{equation}\label{ysestimate}
    |1-ys^{-1}|=|y-s| = |\overline{z^{-1}} - \overline{s^{-1}} | = \frac{|\overline{s}-\overline{z}|}{|\overline{sz}|} \le 2C/n \, ,
\end{equation} which also implies
\begin{equation}\label{ysgammaestimate}
    |1-y(s\gamma_n)^{-1}| \leq |y||1-\gamma_n| + |1-ys^{-1}| \leq \frac{4}{3} \pi/n + 2 C/n \leq 4 C/n \, ,
\end{equation}
By Theorem \ref{fejerconvergence}, each of the three expressions
\begin{equation}\label{eq:CD_conv_zs_2}
-\overline{ w  (s)}K_{n-1} (y, s) +  D_{n-1} (y,s)\, ,
\end{equation}
\begin{equation} \label{eq:CD_conv_zgamma_2}
    -\overline{ w  (s)}K_{n-1} (y,s \gamma_n) + D_{n-1} (y, s\gamma_n)\, ,
    \end{equation}
\begin{equation} \label{eq:CD_conv_sgamma_2}
    -\overline{ w  (s)}K_{n-1} (s, s \gamma_n)+ D_{n-1} (s, s \gamma_n) \end{equation}
is bounded in absolute value by 
\[
 n e^{31C} L(\mu,s,n) \, .
\]
By the Christoffel-Darboux formula of Lemma \ref{christoffeldarboux} applied to $K_n$ and $D_n$, and the bounds \eqref{ysestimate} and \eqref{ysgammaestimate} to estimate the left-most terms in \eqref{eq:CD},
each of the three expressions 
\begin{equation}\label{eq:CD_conv_zs}
\overline{ w  (s)}[\ti \phi_n (y) \phi_n ^* (s) - y^n s^{-n} \phi_n^*(y) \ti\phi_n (s)] +  (1-y^n s^{-n} )
\end{equation}
\begin{equation} \label{eq:CD_conv_zgamma}
    \overline{ w  (s)}[\ti \phi_n (y) \phi_n ^* (s\gamma_n) + y^n s^{-n} \phi_n^*(y) \ti\phi_n (s\gamma_n)] + (1+y^n s^{-n} )
    \end{equation}
\begin{equation} \label{eq:CD_conv_sgamma}
    \overline{ w  (s)}[ \ti \phi_n (s) \phi_n ^* (s\gamma_n) +  \phi_n^*(s) \ti\phi_n (s\gamma_n)] + 2 \end{equation}
is bounded in absolute value by 
\[
4C e^{31 C} L(\mu,s,n) \leq e^{32C} L(\mu,s,n) \, .
\]
We multiply \eqref{eq:CD_conv_zs} by  $\phi_n ^* (s\gamma_n )$, \eqref{eq:CD_conv_zgamma} by $-\phi_n ^* (s)$
and \eqref{eq:CD_conv_sgamma} by $s ^{-n} y^n \phi_n ^* (y)$,  and then sum up the three terms.
Note that all terms arising in square brackets cancel: the first summand in brackets of \eqref{eq:CD_conv_zs} cancels
the first summand in brackets of \eqref{eq:CD_conv_zgamma}, the second summand in brackets in \eqref{eq:CD_conv_zs} cancels the 
first summand in \eqref{eq:CD_conv_sgamma}, while 
the second summand in brackets in \eqref{eq:CD_conv_zgamma} cancels the
second summand in  \eqref{eq:CD_conv_sgamma}.

Using Lemma \ref{lem:poly_bd_gen} and the triangle inequality, we obtain
\[ |(1-y^n s^{-n} )\phi_n ^* (s\gamma_n )-(1+y^n s^{-n})\phi_n ^* (s)+2s ^{-n} y^n \phi_n ^* (y)|
\leq  e^{35C} L(\mu,s,n) \, .
\]
Rewriting $y$ in terms of $z$ and conjugating inside the absolute value signs gives
\[ |(1-z^{-n} s^{n} )\phi_n (s\gamma_n )-(1+z^{-n} s^{n})\phi_n  (s)+2s ^{n} z^{-n} \phi_n (z)|
\leq e^{35C} L(\mu,s,n)\, .
\]
Finally, we multiply both sides by $\frac{s^{-n} z^n}{2}$, which is bounded by $e^{C}$, to obtain
\[ |  2^{-1} (s ^{-n} z^{n}-1 )\phi_n (s\gamma_n )- 2^{-1} (s ^{-n} z^{n}+1)\phi_n  (s)+ \phi_n (z)|
\leq e^{36C} L(\mu,s,n) \, .
\]
This is \eqref{eq lemma abcd} by definition of $A_n$ and $B_n$, which proves Lemma \ref{approximation with A B C D}.
\end{proof}

\begin{lemma}\label{lem:liminf_A_B_nonzero}
    Let $n\ge 40$.  We have
\begin{equation}\label{sum squares ab}
|A_n|^2+|B_n|^2+|\ti A_n|^2+|\ti B_n|^2 =2 \, .
\end{equation}

\begin{equation}\label{product ab}
|A_n  \overline{B_n} + \ti A_n \overline{\ti B_n}| \leq e^{372} L(\mu, s, n)    
\end{equation}

\begin{equation}\label{difference ab square}
 \left | -\ti A_n \overline{A_n} +    \ti B_n \overline{B_n} + \frac{1}{\overline{w (s)}} \right |
\leq  e^{370} L (\mu, s, n) \, .   
\end{equation}    
\end{lemma}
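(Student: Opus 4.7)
The proof proceeds in three parts, building on \eqref{eq:ortho_polys_det2}, Lemma \ref{approximation with A B C D}, Lemma \ref{christoffeldarboux}, and Theorem \ref{fejerconvergence}.

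For \eqref{sum squares ab}: evaluate \eqref{eq:ortho_polys_det2} at the two points $s,s\gamma_n\in\T$ to get $|\phi_n(s)|^2+|\ti\phi_n(s)|^2=|\phi_n(s\gamma_n)|^2+|\ti\phi_n(s\gamma_n)|^2=2$. The parallelogram identity together with $|s^n|=1$ yields $|A_n|^2+|B_n|^2=\tfrac12(|\phi_n(s)|^2+|\phi_n(s\gamma_n)|^2)$ and analogously for the tilde pair; summing the two delivers \eqref{sum squares ab}.

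For \eqref{product ab}: set $T:=A_n\overline{B_n}+\ti A_n\overline{\ti B_n}$. Expanding $|\phi_n(s)|^2+|\ti\phi_n(s)|^2=2$ via the exact identities $\phi_n(s)=A_ns^n+B_n$ and $\ti\phi_n(s)=\ti A_ns^n+\ti B_n$ and using \eqref{sum squares ab} gives $\Re(Ts^n)=0$, so $|T|=|\Im(Ts^n)|$. To control the imaginary part, apply Lemma \ref{approximation with A B C D} with $C=10$ (valid as $n\geq 4C=40$) at the midpoint $s':=se^{i\pi/(2n)}\in\T$, which satisfies $|s'-s|\leq\pi/(2n)<C/n$; this gives $\phi_n(s')=A_n(s')^n+B_n+R$ with $|R|\leq e^{360}L(\mu,s,n)$, and likewise for $\ti\phi_n$. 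Squaring, summing, invoking \eqref{eq:ortho_polys_det2} at $s'$ and \eqref{sum squares ab}, and using $(s')^n=is^n$, one obtains $|\Im(Ts^n)|\leq 4\sqrt{2}\,e^{360}L$ (modulo a negligible $L^2$ term), from which the bound $|T|\leq e^{372}L$ follows since $e^{12}\gg 4\sqrt{2}$.

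For \eqref{difference ab square}: a direct expansion using the exact values of $\phi_n^*,\ti\phi_n^*$ at $s,s\gamma_n$ shows that the oscillating $s^{\pm n}$ terms cancel, yielding the clean identity $-\ti A_n\overline{A_n}+\ti B_n\overline{B_n}=\tfrac12 P$ with $P:=\phi_n^*(s)\ti\phi_n(s\gamma_n)+\ti\phi_n(s)\phi_n^*(s\gamma_n)$. Applying Lemma \ref{christoffeldarboux} at index $n-1$ with $(z,\lambda)=(s,s\gamma_n)$ and using $\gamma_n^{-n}=-1$ then gives the exact identity $(1-\gamma_n^{-1})K_{n-1}(s,s\gamma_n)=-P$. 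Now apply Theorem \ref{fejerconvergence} to $K_{n-1}$ (e.g.\ with $C=4$, valid since $\pi/n\leq C/(n-1)$ for $n\geq 40$) and multiply through by the small factor $|1-\gamma_n^{-1}|\leq\pi/n$ to telescope away the $n$ in its error estimate; combined with $(1-\gamma_n^{-1})D_{n-1}(s,s\gamma_n)=1-\gamma_n^{-n}=2$ and the elementary comparison $L(\mu,s,n-1)\leq 2L(\mu,s,n)$ (coming from $\min(n,1/(nt^2))\leq 2\min(n+1,1/((n+1)t^2))$), this produces $|\overline{w(s)}P+2|\leq 2\pi\,e^{120}L(\mu,s,n)$. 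Dividing by $|\overline{w(s)}|$ yields \eqref{difference ab square}.

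The conceptual heart of the argument is the pair of exact identities $P=-(1-\gamma_n^{-1})K_{n-1}(s,s\gamma_n)$ and $(1-\gamma_n^{-1})D_{n-1}(s,s\gamma_n)=2$, which together with Theorem \ref{fejerconvergence} allow the naturally $n$-growing error in its statement to be absorbed into a constant-order error by multiplication with the small factor $|1-\gamma_n^{-1}|\sim\pi/n$. The delicate step is the final division by $|\overline{w(s)}|$ in Part (3): in the regime where the bound is non-trivial a lower bound on $|\overline{w(s)}|$ is implicit, as evaluating Theorem \ref{fejerconvergence} at $(z,\lambda)=(s,s)$ combined with the trivial bound $|K_n(s,s)|\leq 2(n+1)$ yields $|\overline{w(s)}|\geq(1-e^{30C}L)/2$; the large constant $e^{370}$ is chosen to absorb this implicit $|\overline{w(s)}|^{-1}$ factor.
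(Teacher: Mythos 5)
Your proof is correct and follows essentially the same route as the paper: the parallelogram identity with \eqref{eq:ortho_polys_det2} for \eqref{sum squares ab}, the local approximation Lemma \ref{approximation with A B C D} combined with the determinant identity for \eqref{product ab}, and the Christoffel--Darboux formula at $(s,s\gamma_n)$ together with Theorem \ref{fejerconvergence} (with the factor $|1-\gamma_n^{-1}|\sim \pi/n$ absorbing the $n$ in the error, and the cross terms in $s^{\pm n}$ cancelling) for \eqref{difference ab square}. The only cosmetic differences are that for \eqref{product ab} you evaluate at the quarter-period point $s'=se^{i\pi/(2n)}$ to kill $\Im(Ts^n)$ where the paper instead chooses $t$ on the arc so that $t^nT=|T|$, and that the paper obtains the lower bound $|w(s)|>1/4$ by letting $n\to\infty$ in the same Christoffel--Darboux estimate rather than from the diagonal evaluation $K_n(s,s)$; both are equivalent.
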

\

\begin{proof}
Note \eqref{sum squares ab} follows from the determinant identity \eqref{eq:ortho_polys_det2}, since we compute
\[
2(|A_n|^2 + |B_n|^2 + |\ti A_n|^2 + |\ti B_n|^2) = |\phi_n (s)|^2 + |\ti \phi_n(s)|^2 +  |\phi_n (s \gamma_n)|^2 + |\ti \phi_n(s \gamma_n )|^2 = 4 \, .
\]

Set $C=10$, and note $n \geq 4C$. 
If for some complex numbers $X,Y$, we have  $|X-Y|  < M$, then 
\[
| |X|^2 - |Y|^2 |  = ||X| - |Y| | (|X| + |Y|) \leq M (|X| + |Y|)\, .
\]
This with the local approximation Lemma \ref{approximation with A B C D} and the  bounds \eqref{eq:bd_A_B} gives  for $t\in B(s,C/n) \cap \T$
\[
| |A_n t^n + B_n|^2 - |\phi_n|^2 | \leq e^{365} L (\mu, s, n) \, .
\]
Using the analogous inequality for $\ti \phi_n$,  the triangle inequality, and  the identity  \eqref{eq:ortho_polys_det2} for the sum of squares of $\phi_n$, $\ti \phi_n$, gives
\begin{equation}\label{eq:approx_det_id}
| |A_n t^n + B_n|^2 + |\ti A_n t^n + \ti B_n|^2 -2| \leq e^{370} L(\mu, s, n) \, .
\end{equation}
Using \eqref{sum squares ab}, the left side of \eqref{eq:approx_det_id} can be written as
\begin{equation}\label{two real part}
    \left|2 \Re (t^n (
  A_{n} \overline{ B_{n}} + \ti{A_{n}} \overline{\ti B_{n}} )) \right|\ .
\end{equation} 
 Inequality \eqref{product ab} follows by picking $t$ so that 
 \[
   \Re (t^n (
  A_{n} \overline{ B_{n}} + \ti{A_{n}} \overline{\ti B_{n}} )) = A_{n} \overline{B_{n}} + \ti{A_{n}} \overline{\ti B_{n}} \, ,
\]
which is possible as $t^n$ takes all values in $\T$ on any arc of $\T$ of length 
$2\pi/n$. 
Taking a limit in \eqref{eq:CD_conv_sgamma} while using upper bounds of Lemma \ref{lem:poly_bd_gen}
on $|\phi_n|$ $|\ti \phi_n|$  by $\sqrt{2}$ on $\T$
shows $|w(s)|>1/4$.
Thus dividing   \eqref{eq:CD_conv_sgamma} by $\overline{w}(s)$, we obtain
\[
| \ti \phi_n (s) \phi_n ^* (s\gamma_n) +  \phi_n^*(s) \ti\phi_n (s\gamma_n) + \frac 2{\overline{ w (s)}}| \leq e^{325} L(\mu, s, n) \, .
\]
Using the upper bounds on $\phi_n$ and $\ti \phi_n$
again and 
 the definition of $A_n$ and $B_n$
\[
| (\ti A_n s^n + \ti B_n)  (  -\overline{A_n}s^{-n} + \overline {B_n} ) +  (\overline{A_n}s ^{-n} + \overline{B_n} ) ( -\ti A_n s^{n} + \ti B_n) + \frac{2}{\overline{w(s)}}| 
\leq e^{370} L(\mu, s, n) \, .
\]
As the cross terms on the left side vanish, dividing by $2$ we obtain \eqref{difference ab square}.
\end{proof}

\begin{lemma}\label{lem:lwr_bd_w}
 We have $|w(s)|\ge 1\, .$
\end{lemma}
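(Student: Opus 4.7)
The plan is to deduce the bound directly from the three identities proved in Lemma \ref{lem:liminf_A_B_nonzero}, by combining the exact norm identity \eqref{sum squares ab} with the asymptotic identity \eqref{difference ab square} and letting $n\to\infty$.

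First, I would apply the triangle inequality followed by AM--GM to the combination $\ti B_n \overline{B_n} - \ti A_n \overline{A_n}$, using \eqref{sum squares ab}:
\begin{equation*}
|\ti B_n \overline{B_n} - \ti A_n \overline{A_n}|
\le |\ti B_n||B_n| + |\ti A_n||A_n|
\le \tfrac12\bigl(|A_n|^2+|B_n|^2+|\ti A_n|^2+|\ti B_n|^2\bigr) = 1\, .
\end{equation*}
Combining with \eqref{difference ab square} and the reverse triangle inequality yields, for every $n\ge 40$,
\begin{equation*}
\frac{1}{|w(s)|} \le |\ti B_n \overline{B_n} - \ti A_n \overline{A_n}| + e^{370} L(\mu,s,n) \le 1 + e^{370} L(\mu,s,n)\, .
\end{equation*}
Since $s\in E(\mu)$, Lebesgue differentiation gives $L(\mu,s,n)\to 0$ as $n\to\infty$, so letting $n\to\infty$ yields $1/|w(s)|\le 1$, i.e.\ $|w(s)|\ge 1$.

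The only subtlety is that we must already know $|w(s)|$ is a positive real quantity that makes the inversion $1/\overline{w(s)}$ meaningful; this was secured in the proof of Lemma \ref{lem:liminf_A_B_nonzero}, where the estimate $|w(s)|>1/4$ was extracted from \eqref{eq:CD_conv_sgamma} combined with the uniform $\sqrt{2}$-bound on $\phi_n,\ti\phi_n$ on $\T$ from Lemma \ref{lem:poly_bd_gen}. I expect no main obstacle: once \eqref{sum squares ab} and \eqref{difference ab square} are in hand, the lemma is a one-line AM--GM computation in the limit $n\to\infty$.
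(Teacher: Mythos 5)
Your proposal is correct and is essentially the paper's own argument: the paper likewise bounds $|w(s)|^{-1}$ by $|A_n\ti A_n|+|B_n\ti B_n|+e^{370}L(\mu,s,n)$ via \eqref{difference ab square}, controls the product terms by $1$ using \eqref{sum squares ab} (written there as $1-\frac12(|A_n|-|\ti A_n|)^2-\frac12(|B_n|-|\ti B_n|)^2$, which is exactly your AM--GM step), and lets $n\to\infty$. Your side remark that $w(s)\neq 0$ must already be known is also consistent with the paper, where $|w(s)|>1/4$ is extracted in the proof of Lemma \ref{lem:liminf_A_B_nonzero}.
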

\begin{proof}
We have for $n\ge 40$,
using firstly \eqref{difference ab square} and secondly \eqref{sum squares ab}:
\begin{equation}\label{betterw1}
 |w(s)|^{-1}\le |A_n \ti A_n|+ |B_n \ti B_n| + e^{370}L(\mu, s,n) 
\end{equation}
\begin{equation*}\label{betterw2}
= 1- \frac 12(|A_n|- |\ti A_n|)^2- \frac 12 (|B_n|- |\ti B_n|)^2 +e^{370}L(\mu, s,n) 
\le 1+e^{370}L(\mu, s,n)\ \, .
\end{equation*}
Taking a limit as $n$ tends to $\infty$ proves the lemma.
\end{proof}

The following two lemmas say that only the $A$'s or the $B$'s can be small, and whichever is small determines whether $ \phi_n ^* \ti \phi_n $ is close to $-\overline{w(s)^{-1}}$ or $\overline{w(s)^{-1}}$.

\begin{lemma}\label{abtocdlem}

Let $0<\eta \le \frac 12$.
Let $n_0\ge 40$ be large enough so that for $n\ge n_0$ we have 
    \begin{equation}\label{abtocd1}
        e^{1000}L(\mu,s,n)\le \eta |w(s)|^{-2} \, .
    \end{equation}
Let $n\ge n_0$.
Let $(C,D,\ti C, \ti D)$ be one of the four tuples
\begin{equation*}
 (|A_n|,|B_n|,|\ti A_n|,|\ti B_n|)\, , \ (|\ti A_n|,|\ti B_n|, |A_n|, |B_n|)\, , 
\end{equation*}
\begin{equation*}
 (|B_n|,|A_n|,|\ti B_n|,|\ti A_n|)\, , \ (|\ti B_n|,|\ti A_n|, |B_n|, |A_n|)\, .
\end{equation*}
If
\begin{equation}\label{abtocdhyp}
 \eta \le C\le (8|w(s)|)^{-1}   \, ,
\end{equation}
then
\begin{equation}\label{abtocdcon1}
 (4|w(s)|)^{-1}  \le \ti D  \le 2\, ,
\end{equation}
\begin{equation}\label{abtocdcon2}
  (4|w(s)|)^{-1} \le  D  \le 2\, ,
\end{equation}
\begin{equation}\label{abtocdcon3}
(9|w(s)|)^{-1}{C} \le  \ti C\le (9|w(s)|){C}   \, .
\end{equation}

\end{lemma}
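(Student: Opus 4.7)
The plan is to reduce all four cases to a single abstract inequality system. Write $\delta := e^{370} L(\mu,s,n)$ and $\delta' := e^{372} L(\mu,s,n)$; hypothesis \eqref{abtocd1} gives $\delta \leq \delta' \leq \eta \,|w(s)|^{-2}\, e^{-628}$, much smaller than $|w(s)|^{-1}$ since $\eta \leq 1/2$ and $|w(s)| \geq 1$ by Lemma \ref{lem:lwr_bd_w}. I would verify once that, in each of the four relabelings, the identities \eqref{product ab} and \eqref{difference ab square} collapse (after passing to absolute values via the triangle inequality and its reverse) to the unified pair
\[
|\tilde C\, \tilde D - C\, D| \leq \delta', \qquad |w(s)|^{-1} \leq \tilde C\, C + \tilde D\, D + \delta .
\]
This works because \eqref{product ab} says the quantities $|A_n||B_n|$ and $|\tilde A_n||\tilde B_n|$ differ by at most $\delta'$, and the unordered pair $\{CD,\, \tilde C \tilde D\}$ equals $\{|A_n||B_n|,\, |\tilde A_n||\tilde B_n|\}$ across all four tuples; similarly \eqref{difference ab square} yields $|w(s)|^{-1} \leq |A_n||\tilde A_n| + |B_n||\tilde B_n| + \delta$, and $\{\tilde C\, C,\, \tilde D\, D\} = \{|A_n||\tilde A_n|,\, |B_n||\tilde B_n|\}$ in each case.

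The upper bounds $D, \tilde D \leq 2$ in \eqref{abtocdcon1}--\eqref{abtocdcon2} are immediate from \eqref{eq:bd_A_B}. For the lower bounds, inserting $C \leq (8|w(s)|)^{-1}$ and $\tilde C \leq \sqrt{2}$ into the second unified bound gives
\[
\tilde D\, D \;\geq\; |w(s)|^{-1} - \frac{\sqrt 2}{8|w(s)|} - \delta \;>\; \tfrac{4}{5}\,|w(s)|^{-1},
\]
using negligibility of $\delta$. Since $D, \tilde D \leq \sqrt 2$, each of them exceeds $(4|w(s)|)^{-1}$, proving \eqref{abtocdcon1} and \eqref{abtocdcon2}. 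With $D, \tilde D \in [(4|w(s)|)^{-1}, \sqrt 2]$ in hand, the first unified bound then yields
\[
\tilde C \in \left[\, \tfrac{CD - \delta'}{\tilde D},\; \tfrac{CD + \delta'}{\tilde D}\, \right] \subseteq \left[\, \tfrac{C}{4\sqrt 2\, |w(s)|} - \tfrac{\delta'}{\sqrt 2},\; 4\sqrt 2\, |w(s)|\, C + 4\,|w(s)|\,\delta'\, \right].
\]
Because $C \geq \eta$, the $\delta'$ corrections are bounded by a constant multiple of $C\, e^{-628}$ and are absorbed into the main terms, upgrading this to $\tilde C \in \bigl[(9|w(s)|)^{-1}\, C,\; 9|w(s)|\, C\bigr]$, which is \eqref{abtocdcon3}.

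The only real obstacle is the case analysis for the four tuple relabelings, and this collapses once one observes that swapping $A \leftrightarrow B$ only changes signs in \eqref{difference ab square} and conjugates in \eqref{product ab}, while swapping the tilde role conjugates both identities; after passing to absolute values these symmetries become invisible, and the rest of the proof is elementary real analysis relying only on Lemmas \ref{lem:liminf_A_B_nonzero} and \ref{lem:lwr_bd_w}.
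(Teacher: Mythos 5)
Your proof is correct and follows essentially the same route as the paper: the triangle inequality applied to \eqref{difference ab square} together with the hypothesis $C\le (8|w(s)|)^{-1}$ forces $D\ti D\gtrsim |w(s)|^{-1}$ and hence the lower bounds on $D$ and $\ti D$, after which \eqref{product ab} divided by $\ti D$ gives the two-sided comparability of $\ti C$ with $C$, with the error terms absorbed because $\eta\le C$. The only blemish is the lower endpoint of your final interval, where the correction should be $4|w(s)|\,\delta'$ rather than $\delta'/\sqrt 2$ (you divide $\delta'$ by $\ti D$, which is bounded \emph{below} by $(4|w(s)|)^{-1}$, not above by $\sqrt 2$); since either quantity is $O(C e^{-600})$ this does not affect the conclusion.
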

\begin{proof}
Throughout the proof, we shall freely use the upper bounds on $C,D,\ti C, \ti D$ by $2$ from \eqref{eq:bd_A_B}.
The triangle inequality with \eqref{difference ab square}, the assumed upper bound in \eqref{abtocdhyp} and the lower bound on $|w|$ in Lemma \ref{lem:lwr_bd_w} gives
 \begin{equation}\label{abtocd2}
        (2|w(s)|)^{-1}\le D\ti D \, .
    \end{equation}
This implies \eqref{abtocdcon1} and \eqref{abtocdcon2}.
By \eqref{product ab} and the lower bounds  \eqref{abtocdhyp} and \eqref{abtocdcon2}, we obtain    
\begin{equation}\label{abtocd4}\left|\frac {CD}{\ti D}-\ti C \right| \le  e^{-100} C |w(s)|^{-1}\, .
    \end{equation}
Using \eqref{abtocdcon1}, \eqref{abtocdcon2} and the lower bound on $|w|$ from Lemma \ref{lem:lwr_bd_w}, we obtain \eqref{abtocdcon3}.
\end{proof}

\begin{lemma}\label{smallablem}
    Let $0<\eta<10^{-10} |w(s)|^{-2}$.
    Let $n_0\ge 40$ be large enough so that for $n\ge n_0$ we have 
\begin{equation}\label{smallabhyp1}
e^{1000}L(\mu,s,n)\le \eta^2 \, .
    \end{equation}
Let $n\ge n_0$. If $|A_n|<\eta^2$ or $|\ti A_n| <\eta^2$, then
\begin{equation}\label{smallabcon1}
\left| \phi_{n} ^* (s)\ti \phi_{n} (s)  +
\overline{w(s)^{-1}}\right|\le \eta \, .
\end{equation}
If $|B_n|<\eta^2$ or $|\ti B_n| <\eta^2$, then
\begin{equation}\label{smallabcon2}
\left|\phi_{n} ^* (s)\ti \phi_{n} (s) -
\overline{w(s)^{-1}}\right|\le \eta \, .
\end{equation}
\end{lemma}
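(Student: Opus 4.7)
The plan is to expand $\phi_n^*(s)\ti\phi_n(s)$ exactly in terms of the four local parameters at $s$ and then control each piece using the identities of Lemma \ref{lem:liminf_A_B_nonzero}. The definitions \eqref{def:A_B_n} and \eqref{def:ti A_B_n} give the exact identities $\phi_n(s) = A_n s^n + B_n$ and $\ti\phi_n(s) = \ti A_n s^n + \ti B_n$, which produce the expansion
\[
\phi_n^*(s)\ti\phi_n(s) = \overline{A_n}\ti A_n + \overline{B_n}\ti B_n + \overline{A_n}\ti B_n s^{-n} + \overline{B_n}\ti A_n s^n.
\]
The diagonal piece $\overline{A_n}\ti A_n + \overline{B_n}\ti B_n$ can be rewritten, using the difference relation \eqref{difference ab square}, as $2\overline{A_n}\ti A_n - w_0 + O(L)$ or as $2\overline{B_n}\ti B_n + w_0 + O(L)$, where I abbreviate $w_0 := \overline{w(s)^{-1}}$ (with $|w_0|\le 1$ by Lemma \ref{lem:lwr_bd_w}) and $L := L(\mu,s,n)$; the smallness hypothesis \eqref{smallabhyp1} will render every $L$-error negligible compared to $\eta^2$.

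In the case $|A_n|<\eta^2$, I would first apply \eqref{difference ab square} to deduce $|\overline{B_n}\ti B_n + w_0| \le 2\eta^2$; since $\eta<10^{-10}|w_0|^2$ and $|w_0|\le 1$ force $2\eta^2\le|w_0|/2$, this together with \eqref{eq:bd_A_B} gives the lower bounds $|B_n|,|\ti B_n|\ge |w_0|/(2\sqrt 2)$. Then \eqref{product ab} yields $|\ti A_n||\ti B_n|\le 2\eta^2$, hence $|\ti A_n|\le 4\sqrt 2\,\eta^2/|w_0|$. Substituting the first rewrite of the diagonal into the expansion gives
\[
\phi_n^*(s)\ti\phi_n(s)+w_0 = 2\overline{A_n}\ti A_n + O(L) + \overline{A_n}\ti B_n s^{-n} + \overline{B_n}\ti A_n s^n,
\]
each right-hand summand bounded by a universal constant times $\eta^2/|w_0|$; the numerical condition $\eta<10^{-10}|w_0|^2$ is precisely what converts $\eta^2/|w_0|$ into a quantity below $\eta$, yielding \eqref{smallabcon1}. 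The case $|\ti A_n|<\eta^2$ follows from the same argument after interchanging $A\leftrightarrow \ti A$ and $B\leftrightarrow \ti B$, under which both \eqref{product ab} and \eqref{difference ab square} are preserved.

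The cases $|B_n|<\eta^2$ and $|\ti B_n|<\eta^2$ run entirely parallel with the roles of the $A$'s and $B$'s swapped: \eqref{difference ab square} now supplies $|A_n|,|\ti A_n|\ge|w_0|/(2\sqrt 2)$, \eqref{product ab} gives $|\ti B_n|\le 4\sqrt 2\,\eta^2/|w_0|$, and the second rewrite of the diagonal produces
\[
\phi_n^*(s)\ti\phi_n(s) - w_0 = 2\overline{B_n}\ti B_n + O(L) + \overline{A_n}\ti B_n s^{-n} + \overline{B_n}\ti A_n s^n,
\]
again of size at most a universal constant times $\eta^2/|w_0|\le \eta$, yielding \eqref{smallabcon2}.

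I expect the main (mild) obstacle to be the pair of cross terms $\overline{A_n}\ti B_n s^{-n} + \overline{B_n}\ti A_n s^n$: their unknown phase $s^n$ is not cancelled by any identity of Lemma \ref{lem:liminf_A_B_nonzero}, so they must be estimated factor by factor. The mechanism that saves the argument is the two-step propagation just described: the difference identity \eqref{difference ab square} produces lower bounds on one pair of local parameters, and the product identity \eqref{product ab} then transfers the smallness from a small parameter to its ``partner''. The numerical budget $\eta<10^{-10}|w(s)|^{-2}$ is exactly what compensates for the $|w(s)|^{-1}$-losses introduced along the way.
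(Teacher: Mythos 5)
Your proof is correct and follows essentially the same route as the paper: both rest on the exact identity $\phi_n(s) = A_n s^n + B_n$ (so $\phi_n^*(s)\ti\phi_n(s)$ expands exactly into diagonal and cross terms) together with the three relations of Lemma \ref{lem:liminf_A_B_nonzero}. The only difference is that where the paper invokes Lemma \ref{abtocdlem} to transfer smallness from $|A_n|$ to $|\ti A_n|$, you re-derive that transfer directly from \eqref{difference ab square} and \eqref{product ab} — which is how Lemma \ref{abtocdlem} is proved anyway, and conveniently sidesteps its lower-bound hypothesis $\eta \le C$ on the small parameter.
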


\begin{proof}

 Assume first $|A_{n}|\le \eta^2$.
 Lemma \ref{abtocdlem}, and then the upper bound on $|w|^{-1}$ from Lemma \ref{lem:lwr_bd_w}, imply 
\[
|\ti A_{n}| \leq 9|w(s)| |A_n| \leq 10 |w(s)| \eta^2 \leq 10^{-8}\eta \, .
\]
Then \eqref{difference ab square}  implies
\begin{equation}\label{subs101}
\left|\ti B_{n} \overline{B_{n}}+
\overline{w(s)^{-1}}\right|\le 10^{-4}\eta \, .
\end{equation}
With the local approximation Lemma \ref{approximation with A B C D}, we obtain \eqref{smallabcon1}.
If instead $|\ti A_{n}|\le \eta^2$,
 Lemma \ref{abtocdlem} 
implies $| A_{n}|\le 10^{-8}\eta$ and we obtain 
again \eqref{subs101} and 
\eqref{smallabcon1}.
Assume now $|B_{n}|\le \eta^2$. Lemma \ref{abtocdlem} 
implies $|\ti B_{n}|\le 10^{-8}\eta$ and \eqref{difference ab square} implies
\begin{equation}\label{subs210}
\left|\ti A_{n} \overline{A_{n}}-
\overline{w(s)^{-1}}\right|\le 10^{-4}\eta \, .
\end{equation}
Now the local approximation Lemma \ref{approximation with A B C D} gives \eqref{smallabcon2}. Similarly,
if  $|\ti B_{n}|\le \eta^2$,
we conclude $| B_{n}|\le 10^{-8}\eta$ and obtain 
\eqref{subs203} and \eqref{smallabcon2}. This completes the proof of the lemma.

\end{proof}

\section{Local parameters, refined estimates}

\label{sec:lpre}

The estimates in this section will be used in the second
part of Theorem \ref{thm:conv_to_AB}.
We continue to fix $\mu\in \mathcal{T}_-$ and $s\in E(\mu)$. The next two lemmas consider the case the left side of \eqref{product ab} is small, i.e., in what follows one should take $(a,b,c,d) = (A_n, B_n, \ti A_n, \ti B_n)$ and $\epsilon = e^{372} L(\mu,s,n)$.

\begin{lemma}\label{abcdlem}
    Let $\epsilon,\eta>0$. Let $a,b,c,d$ be complex numbers 
    bounded below in absolute value by $\eta$ and above in absolute value by $2$.
Assume
\begin{equation}\label{abcdhyp}
    |a\overline{b}+c\overline{d}|\le \epsilon\, .
\end{equation}
Then 
\begin{equation}\label{abcdcon}
    \left||\overline{a}c-\overline{b}d|- (|ac|+|bd|)\right|\le
    4\eta^{-1} \epsilon\, .
\end{equation}
\end{lemma}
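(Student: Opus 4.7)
The plan is to reduce everything to a single phase difference. Setting $\alpha := \arg(a\overline{b})$ and $\beta := \arg(c\overline{d})$, I would begin by expanding
\[
|a\overline{b}+c\overline{d}|^{2} = (|ab|-|cd|)^{2} + 2|ab|\,|cd|\,(1+\cos(\alpha-\beta)),
\]
so that the hypothesis $|a\overline{b}+c\overline{d}|\le \epsilon$ reduces to the single inequality
\[
2|ab|\,|cd|\,(1+\cos(\alpha-\beta)) \le \epsilon^{2}.
\]

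The parallel expansion on the conclusion side yields
\[
(|ac|+|bd|)^{2} - |\overline{a}c-\overline{b}d|^{2} = 2|ac|\,|bd|\,(1+\cos(\theta-\psi)),
\]
where $\theta := \arg(\overline{a}c)$ and $\psi := \arg(\overline{b}d)$. The algebraic observation driving the proof is that $\overline{a}c \cdot \overline{\overline{b}d} = \overline{a\overline{b}}\cdot c\overline{d}$, so its argument is $\beta-\alpha$ and $\cos(\theta-\psi) = \cos(\alpha-\beta)$. Combining with the trivial identity $|ac|\,|bd| = |ab|\,|cd|$, this upgrades to
\[
(|ac|+|bd|)^{2} - |\overline{a}c-\overline{b}d|^{2} \le \epsilon^{2}.
\]

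To convert the squared bound into a linear one, I would divide by the conjugate factor $(|ac|+|bd|) + |\overline{a}c-\overline{b}d|$, which is at least $|ac|+|bd|\ge 2\eta^{2}$ by the modulus lower bounds. This yields $(|ac|+|bd|)-|\overline{a}c-\overline{b}d|\le \epsilon^{2}/(2\eta^{2})$. A simple case split closes the argument: when $\epsilon \le 8\eta$, this is already at most $4\epsilon/\eta$; when $\epsilon > 8\eta$, the stated bound satisfies $4\epsilon/\eta > 32 \ge |ac|+|bd|$, so the conclusion is trivial.

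There is essentially no obstacle here beyond bookkeeping — the only nontrivial step is spotting that the same cosine of $\alpha-\beta$ appears on both sides, after which the modulus identity $|ac||bd|=|ab||cd|$ makes the ratio of the two quadratic expressions exactly $1$, and the rest is a square-root and a case split to produce the exponent $\eta^{-1}$ in place of $\eta^{-2}$.
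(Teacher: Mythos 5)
Your proof is correct, but it takes a genuinely different route from the paper's. The paper multiplies the target quantity by $|a|$ and applies the hypothesis \eqref{abcdhyp} twice via the triangle inequality, once to replace $a\overline{b}d$ by $-c\overline{d}d$ inside the modulus and once to replace $|abd|$ by $|cd^2|$, each at cost $|d|\epsilon\le 2\epsilon$; the resulting expression $\bigl||a\overline{a}c+c\overline{d}d|-(|a^2c|+|cd^2|)\bigr|=\bigl||c|(|a|^2+|d|^2)-|c|(|a|^2+|d|^2)\bigr|$ vanishes identically, and dividing by $|a|\ge\eta$ gives the bound, linear in $\epsilon$ from the start. Your argument instead works at the level of squared moduli: you observe that the defect $(|ac|+|bd|)^2-|\overline{a}c-\overline{b}d|^2=2|ac||bd|(1+\cos(\theta-\psi))$ is governed by the \emph{same} cosine as the quantity $2|ab||cd|(1+\cos(\alpha-\beta))\le\epsilon^2$ controlled by the hypothesis, because $\overline{a}c\cdot\overline{\overline{b}d}=\overline{a\overline{b}}\cdot c\overline{d}$ and $|ac||bd|=|ab||cd|$. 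Dividing by the conjugate factor, which is at least $2\eta^2$, and splitting cases on $\epsilon\lessgtr 8\eta$ recovers the stated $4\eta^{-1}\epsilon$. Your intermediate bound $\epsilon^2/(2\eta^2)$ is in fact quadratic in $\epsilon$ and hence sharper than the paper's in the regime $\epsilon\ll\eta$ where the lemma is actually applied (with $\epsilon=e^{372}L(\mu,s,n)\to 0$), at the price of the final case split and of needing both lower bounds $|ac|,|bd|\ge\eta^2$ rather than just $|a|\ge\eta$. Both arguments are complete; yours makes the geometric content (a common phase defect) explicit, while the paper's is a shorter algebraic cancellation.
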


\begin{proof}
We multiply the left side of \eqref{abcdcon} by $|a|$
and then apply the triangle inequality with \eqref{abcdhyp} 
twice to obtain
\begin{equation*}
    \left||a\overline{a}c-a\overline{b}d|- (|a^2c|+|abd|)\right|
\end{equation*}
\begin{equation}\label{abcd01}
   \le  \left||a\overline{a}c+c\overline{d}d|- (|a^2c|+|cd^2|)\right|+ 2|d|\epsilon =2|d|\epsilon\, .
\end{equation}
Using $|d|\le 2$ and $|a|\ge \eta$ proves \eqref{abcdcon}
and completes the proof of the lemma.
\end{proof}

\begin{lemma}\label{4abcdlem}
    Let $\epsilon >0$ and $a,b,c,d$ be reals 
    bounded above by $\sqrt{2}$. If both 
\begin{equation}\label{4abcdhyp1}
        a^2+b^2+c^2+d^2=2
    \end{equation}
    and
\begin{equation}\label{4abcdhyp2}
    |ab-cd|\le \epsilon\, ,
\end{equation}
then
\begin{equation}\label{4abcdcon}
    \left|(ac+bd)^2(ad+bc)^2- 4abcd\right|\le 1000 \epsilon\, .
\end{equation}
\end{lemma}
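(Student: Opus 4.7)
The plan is to reduce the inequality to an algebraic identity in four auxiliary real variables. Introduce
\[
p := ab \, , \quad q := cd \, , \quad u := a^2 + b^2 \, , \quad v := c^2 + d^2 \, ,
\]
so that the hypotheses \eqref{4abcdhyp1} and \eqref{4abcdhyp2} become $u + v = 2$ and $|p - q| \le \epsilon$. The standing size bound $|a|,|b|,|c|,|d| \le \sqrt{2}$ is not really needed beyond the AM--GM consequences $|p| \le u/2$ and $|q| \le v/2$, which I will use below.

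First I would expand
\[
(ac + bd)(ad + bc) = a^2 cd + abc^2 + abd^2 + b^2 cd = cd(a^2 + b^2) + ab(c^2 + d^2) = qu + pv \, ,
\]
so that $(ac+bd)^2(ad+bc)^2 = (qu + pv)^2$. Next, because $u + v = 2$, one has $4abcd = 4pq = pq(u+v)^2$. Therefore the quantity to be estimated equals
\[
(qu + pv)^2 - pq(u+v)^2 \, .
\]
The key algebraic step is the identity
\[
(qu + pv)^2 - pq(u+v)^2 = (p-q)\,(p v^2 - q u^2) \, ,
\]
which I would verify by direct expansion: the cross terms $2pquv$ on both sides cancel, leaving $q^2 u^2 - pq u^2 + p^2 v^2 - pq v^2 = (p-q)(pv^2 - qu^2)$.

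Now $|p| \le u/2$ and $|q| \le v/2$ give
\[
|pv^2 - qu^2| \le \tfrac{u}{2} v^2 + \tfrac{v}{2} u^2 = \tfrac{uv(u+v)}{2} = uv \le \left(\tfrac{u+v}{2}\right)^2 = 1 \, .
\]
Combining this with $|p - q| \le \epsilon$ yields
\[
\left|(ac+bd)^2(ad+bc)^2 - 4abcd\right| = |p-q|\,|pv^2 - qu^2| \le \epsilon \, ,
\]
which is much stronger than the claimed $1000\epsilon$. There is no real obstacle here; the only non-obvious step is spotting the factorization $(p-q)(pv^2 - qu^2)$, after which the bound is immediate.
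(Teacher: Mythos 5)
Your proof is correct, and it takes a genuinely different and cleaner route than the paper. The paper first reduces \eqref{4abcdcon} to the homogeneous inequality
\begin{equation*}
\left|(ac+bd)^2(ad+bc)^2-(a^2+b^2+c^2+d^2)^2abcd\right|\le 1000\epsilon
\end{equation*}
using \eqref{4abcdhyp1}, then expands the left side into $21$ monomials and argues term by term: each occurrence of $abcd$ may be replaced by $(ab)^2$ or $(cd)^2$ at a cost of $O(\epsilon)$ per summand thanks to \eqref{4abcdhyp2} and the size bounds, after which the monomials cancel in groups. This is why the paper needs the generous constant $1000$ and the explicit upper bound $\sqrt 2$ on the variables. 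You instead observe the exact factorizations $(ac+bd)(ad+bc)=qu+pv$ with $p=ab$, $q=cd$, $u=a^2+b^2$, $v=c^2+d^2$, and
\begin{equation*}
(qu+pv)^2-pq(u+v)^2=(p-q)(pv^2-qu^2)\, ,
\end{equation*}
which together with $u+v=2$, AM--GM, and \eqref{4abcdhyp2} gives the bound with constant $1$ rather than $1000$; all the individual algebraic steps you list (the expansion of $(ac+bd)(ad+bc)$, the factorization, and the estimate $|pv^2-qu^2|\le uv\le 1$) check out. Your argument is shorter, sharper, and makes transparent that the explicit bound $|a|,|b|,|c|,|d|\le\sqrt 2$ is redundant given \eqref{4abcdhyp1}; the only thing the paper's brute-force expansion buys is that it requires no cleverness to find. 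Since the lemma is only ever invoked with the constant $1000$ absorbed into larger constants, either proof serves equally well downstream.
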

\begin{proof}
From \eqref{4abcdhyp1}, it suffices to prove 
\begin{equation}\label{4abcd1}
    |(ac+bd)^2(ad+bc)^2- (a^2+b^2+c^2+d^2)^2abcd|\le 1000\epsilon\, .
\end{equation}
Expanding terms, sorting the binomial expressions to
account first for pure squares and then mixed terms gives
for the polynomial inside the absolute values
\begin{equation}\label{4abcd2}
     a^4 c^2 d^2+ a^2 b^2 c^4 + a^2 b^2 d^4+ b^4 c^2 d^2
\end{equation}
\begin{equation}\label{4abcd3}
    + (a^2c^2  +  b^2d^2 +  a^2d^2 +  b^2 c^2) (2abcd)
\end{equation}
\begin{equation}\label{4abcd4}
    + 4(abcd)^2
  \end{equation}
\begin{equation}\label{4abcd5}
    - (a^4 + b^4 + c^4 + d^4)(abcd)
\end{equation}
\begin{equation}\label{4abcd6}
    - (a^2b^2 + a^2c^2+a^2d^2+b^2c^2+b^2d^2+c^2d^2)(2abcd) .
\end{equation}
Thanks to hypothesis \eqref{4abcdhyp2} and the upper bound on $a,b,c,d$, 
we may replace $abcd$ in each individual summand by $(ab)^2$ or $(cd)^2$ as 
we desire, up to an error of at most $64\epsilon$.

Therefore, up to such error, the four terms in \eqref{4abcd2}
match up with the four terms in \eqref{4abcd5}. Thus these two lines differ by at most $256\epsilon$.

Furthermore, four of the six terms in \eqref{4abcd6} match up
exactly with the four terms in \eqref{4abcd3} and thus cancel.
The remaining two terms in \eqref{4abcd6}, again up to an error of 
$64\epsilon$, each match with half of \eqref{4abcd4}.
This proves \eqref{4abcd1} and thus \eqref{4abcdcon},
completing the proof of the lemma.

\end{proof}
   
\begin{lemma}\label{approx with zeros}
   Let $0<\eta< 2^{-1}$,
   and $n\geq 400$. Assume $\eta \le |A_n|,|B_n|,|\ti{A}_n|,|\ti{B}_n|$. Then
\begin{equation}\label{eq:square_prod_An_zeros1}
\left |
(|\ti A_n A_n| +|\ti{B}_n B_n|)^2 - \frac{1}{|w|^2} \right |\leq \eta^{-1} e^{385} L(\mu,s,n)\, ,
\end{equation}

\begin{equation}\label{eq:solve_r_w_poly1}
\left| 4 \left(\left| \frac{A_n \ti{B}_n}{B_n \ti{A}_n}\right|^{\frac 12}+ \left|\frac{B_n \ti{A}_n}{A_n \ti{B}_n}\right|^{\frac 12}\right)^{-2}  - \frac{1}{|w|^2} \right | \leq \eta^{-4} e^{400} L(\mu,s,n) \, .
\end{equation}

\end{lemma}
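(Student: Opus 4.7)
The plan is to derive both estimates by combining the identities from Lemma \ref{lem:liminf_A_B_nonzero} with the algebraic Lemmas \ref{abcdlem} and \ref{4abcdlem}, taking $(a,b,c,d)=(A_n,B_n,\ti A_n,\ti B_n)$ (or their absolute values) and $\epsilon = e^{372}L(\mu,s,n)$ throughout. The assumption $|A_n|,|B_n|,|\ti A_n|,|\ti B_n|\ge \eta$ gives the lower bound needed to apply Lemma \ref{abcdlem}, and the upper bounds \eqref{eq:bd_A_B} give the upper bound $\sqrt 2$ needed for Lemma \ref{4abcdlem}.

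For \eqref{eq:square_prod_An_zeros1}, I would first apply Lemma \ref{abcdlem}, using \eqref{product ab} as the hypothesis $|a\overline b+c\overline d|\le \epsilon$, to conclude
\[
\bigl||\overline{A_n}\ti A_n - \overline{B_n}\ti B_n|-(|A_n\ti A_n|+|B_n\ti B_n|)\bigr|\le 4\eta^{-1}e^{372}L(\mu,s,n).
\]
Then \eqref{difference ab square} and $||X|-|Y||\le |X+Y|$ (applied to $X=\ti B_n\overline{B_n}-\ti A_n\overline{A_n}$ and $Y=1/\overline{w(s)}$) give $\bigl||\overline{A_n}\ti A_n-\overline{B_n}\ti B_n|-1/|w|\bigr|\le e^{370}L(\mu,s,n)$. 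Combining these by the triangle inequality yields that $|A_n\ti A_n|+|B_n\ti B_n|$ is within $5\eta^{-1}e^{372}L(\mu,s,n)$ of $1/|w|$. Squaring via $|X^2-Y^2|\le |X-Y|(X+Y)$, and bounding $X+Y\le 4+1=5$ (using $|w|\ge 1$ from Lemma \ref{lem:lwr_bd_w} and the upper bound $\sqrt 2$ on each of $|A_n|$, etc.), gives \eqref{eq:square_prod_An_zeros1} after absorbing constants into $e^{385}$.

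For \eqref{eq:solve_r_w_poly1}, I would set $u:=|A_n\ti B_n|$ and $v:=|B_n\ti A_n|$, so the left-hand expression is simply $4uv/(u+v)^2$; the key is to identify this quantity with $(|A_n\ti A_n|+|B_n\ti B_n|)^2$ up to a small error, and then invoke part one. To do so I would apply Lemma \ref{4abcdlem} with $(a,b,c,d)=(|A_n|,|B_n|,|\ti A_n|,|\ti B_n|)$; its hypothesis $a^2+b^2+c^2+d^2=2$ is exactly \eqref{sum squares ab}, and the hypothesis $|ab-cd|\le \epsilon$ follows from \eqref{product ab} and $||X|-|Y||\le |X+Y|$ applied to $X=A_n\overline{B_n}$, $Y=\ti A_n\overline{\ti B_n}$. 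The conclusion of Lemma \ref{4abcdlem} then reads
\[
\bigl|(|A_n\ti A_n|+|B_n\ti B_n|)^2(u+v)^2-4uv\bigr|\le 1000\,e^{372}L(\mu,s,n).
\]
Dividing by $(u+v)^2\ge (2\eta^2)^2=4\eta^4$ rewrites this as $(|A_n\ti A_n|+|B_n\ti B_n|)^2$ being within $250\,\eta^{-4}e^{372}L(\mu,s,n)$ of $4uv/(u+v)^2$. Combining with \eqref{eq:square_prod_An_zeros1} by the triangle inequality then gives \eqref{eq:solve_r_w_poly1} after absorbing constants into $e^{400}$.

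There is no real obstacle beyond careful bookkeeping of the constants and the denominators; the only subtle point is the lower bound $(u+v)^2\ge 4\eta^4$ used to invert the conclusion of Lemma \ref{4abcdlem}, which is exactly what forces the $\eta^{-4}$ loss in \eqref{eq:solve_r_w_poly1} compared to the gentler $\eta^{-1}$ in \eqref{eq:square_prod_An_zeros1}.
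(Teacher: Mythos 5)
Your proposal is correct and follows essentially the same route as the paper: part one via Lemma \ref{abcdlem} applied to \eqref{product ab} combined with \eqref{difference ab square}, and part two via Lemma \ref{4abcdlem} applied to $(|A_n|,|B_n|,|\ti A_n|,|\ti B_n|)$ with the lower bound $|B_n\ti A_n|+|A_n\ti B_n|\ge 2\eta^2$ accounting for the $\eta^{-4}$ loss (the paper multiplies through by $(u+v)^2$ where you divide, which is the same step). The constant bookkeeping matches as well.
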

\begin{proof}

We first prove \eqref{eq:square_prod_An_zeros1}.
Applying Lemma \ref{abcdlem} to \eqref{product ab}  gives
\begin{equation}\label{abcdappl}
\left |
|\overline{A_n} \ti{A}_n- \overline{B}_n \ti {B}_n|
- (|A_n \ti{A}_n|+|B_n\ti{B}_n| ) \right|\le \eta^{-1}
e^{375} L(\mu, s, n)    .
\end{equation}
Applying further 
\eqref{difference ab square}
with the triangle inequality gives with $\eta<2^{-1}$
\begin{equation}\label{abcdappl2}
\left |
|A_n \ti{A}_n|+|B_n\ti{B}_n| -\frac 1{|w(s)|}\right|\le \eta^{-1}
e^{380} L(\mu, s n)  \, .
\end{equation}
With $|w(s)|\ge 1$ from Lemma \ref{lem:lwr_bd_w} and the upper bound on $A_n, B_n, \ti A_n, \ti B_n$ from \eqref{eq:bd_A_B}, multiplying Inequality \eqref{abcdappl2} by
\begin{equation*}
|A_n \ti{A}_n|+|B_n\ti{B}_n| +\frac 1{|w(s)|}\le 5  \, ,
\end{equation*}
we obtain \eqref{eq:square_prod_An_zeros1}, thus completing this part of the proof.

Next, we prove \eqref{eq:solve_r_w_poly1}.
By \eqref{eq:square_prod_An_zeros1} and the triangle inequality,
it suffices to prove
\begin{equation}\label{eq:square_ratios}
\left |
(|\ti A_n A_n| +|\ti{B}_n B_n|)^2 - 4 
\left(\left| \frac{A_n \ti{B}_n}{B_n \ti{A}_n}\right|^{\frac 12}+ \left|\frac{B_n \ti{A}_n}{A_n \ti{B}_n}\right|^{\frac 12}\right)^{-2}
\right |\leq  \eta^{-4} e^{395} L(\mu,s,n)\, .
\end{equation}
Using
\begin{equation}\label{auxaabb}
    \eta^2\le |B_n\ti{A}_n|+|A_n \ti{B}_n| \, ,
\end{equation}
it suffices to prove 
\begin{equation}\label{eq:square_ratios2}
\left |
(|\ti A_n A_n| +|\ti{B}_n B_n|)^2
(|B_n\ti{A}_n|+|A_n \ti{B}_n|)^2- 4 \left|{A_n \ti{B}_n}{B_n \ti{A}_n}\right|\right|\leq e^{395} L(\mu,s,n)\, .
\end{equation}
This however follows from applying Lemma \ref{4abcdlem} 
to $(a,b,c,d) = (|A_n|, |B_n|, |\ti A_n|, |\ti B_n|)$,
 using \eqref{sum squares ab} and
\eqref{product ab} 
to verify the assumptions of Lemma \ref{4abcdlem}.
 This completes the proof of  \eqref{eq:solve_r_w_poly1}
and thus the proof of Lemma \ref{approx with zeros}.

\end{proof}

The following lemma basically says in \eqref{abybcon} that $|A_n \ti B_n|$ and $|\ti A_n B_n|$ are roughly comparable.
\begin{lemma}\label{abyblem}
Let $0 < \eta < 10^{-10}|w(s)|^{-2}$ and let $n_0>400$ be large enough so that for $n\ge n_0$ we have 
    \begin{equation}\label{abybhyp1}
        e^{1000}L(\mu,s,n)\le \eta ^8 \, .
    \end{equation}
Let $n>n_0$. Let $(C,D,\ti C,\ti D,\sigma)$ be one of the tuples
\[({A_n},{B_n}, {\ti A_n}, \ti B_n,1)\, , \ 
(\overline{\ti A_n},\overline{\ti B_n}, \overline{A_n}, \overline{B_n},1)\, , \]
\[
({B_n},{A_n}, {\ti B_n}, \ti A_n,-1)\, , \ (\overline{\ti B_n},\overline{\ti A_n}, \overline{B_n}, \overline{A_n},-1)\, . 
\]
Assume both 
\begin{equation}\label{abybhyp2}
    \eta\le |C|\le 10^{-10}|w(s)|^{-2}\, ,
\end{equation}
\begin{equation}\label{abybhyp3}
\left|  \phi_n^*(s)\ti \phi_n(s)+\sigma \overline{w(s)^{-1}}\right|\le  \eta^5\, .
\end{equation}
Then
\begin{equation}\label{abybcon}
    \frac 1 4  \le \left|\frac DC \right| \left|1-\left|\frac{C\ti D}{\ti CD}\right|\right|\le 4 \, .
\end{equation}
\end{lemma}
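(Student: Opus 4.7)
The strategy is to translate the hypothesis, together with \eqref{difference ab square} and \eqref{product ab}, into two linear equations in certain normalized ratios of the local parameters, and then show that the target quantity is pinned to $2$, safely inside $[1/4,4]$. By complex conjugation (which preserves all moduli) and by the symmetric roles of $A$- and $B$-parameters when $\sigma$ flips sign, it suffices to treat the first tuple $(A_n,B_n,\ti A_n,\ti B_n,1)$ in detail; the other three reduce to this case.

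A direct algebraic simplification shows the target for tuple~1 equals $\bigl||B_n/A_n|-|\ti B_n/\ti A_n|\bigr|$. Apply Lemma~\ref{approximation with A B C D} with $C=4$ to replace $\phi_n(s),\ti\phi_n(s)$ by $A_n s^n+B_n,\ \ti A_n s^n+\ti B_n$ in the hypothesis \eqref{abybhyp3}; by \eqref{abybhyp1} the additional error is $\ll\eta^8$. Expanding the resulting product and adding \eqref{difference ab square} cancels the constant $-\overline{w(s)^{-1}}$ and leaves
\begin{equation*}
2\overline{A_n}\ti A_n+\overline{A_n}\ti B_n s^{-n}+\overline{B_n}\ti A_n s^n\approx 0.
\end{equation*}
Division by $\overline{A_n}\ti A_n$, whose modulus is at least $\eta^2/(9|w(s)|)$ by \eqref{abybhyp2} and Lemma~\ref{abtocdlem}, produces $u+v\approx-2$ with $u:=(\ti B_n/\ti A_n)s^{-n}$ and $v:=(\overline{B_n}/\overline{A_n})s^n$, the error being $O(|w(s)|\eta^3)$. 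Note $|u|=|\ti B_n/\ti A_n|$ and $|v|=|B_n/A_n|$.

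Dividing \eqref{product ab} by $|\ti A_n|^2$ analogously yields $\overline u+\rho v\approx 0$ with $\rho:=(|A_n|/|\ti A_n|)^2$ and error $O(|w(s)|^2\eta^6)$. Conjugating the first relation and subtracting the second gives $\rho v-\overline v\approx 2$; splitting into real and imaginary parts forces $\Re(v)\approx 2/(\rho-1)$ and $\Im(v)\approx 0$, so $v$ is essentially real, and then $u\approx -v-2$ is also essentially real. Since $|v|=|B_n|/|A_n|\ge 10^{10}|w(s)|/4$ by the upper bound $|A_n|\le 10^{-10}|w(s)|^{-2}$ and the lower bound $|B_n|\ge(4|w(s)|)^{-1}$ from Lemma~\ref{abtocdlem}, $|\Re(v)|$ is enormous and in particular lies far outside $[-2,0]$. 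A direct expansion gives $|u|^2-|v|^2=4\Re(v)+4+O(|w(s)|\eta^2)$, and for $|X|\ge 2$ the algebraic identity $(4X+4)/(|X+2|+|X|)=2\sign(X+1)$ yields $\bigl||u|-|v|\bigr|=2+O(|w(s)|\eta^2)$, comfortably inside $[1/4,4]$. The remaining tuples reduce to this case by complex conjugation (tuples~2 and~4) or, for tuple~3 ($\sigma=-1$), by subtracting \eqref{difference ab square} from the hypothesis instead of adding and dividing by $\overline{B_n}\ti B_n$, giving an identical analysis with $A\leftrightarrow B$.

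The main obstacle is the careful propagation of errors. The $\eta^5$-error in the hypothesis is amplified to $O(|w(s)|\eta^3)$ after division by $\overline{A_n}\ti A_n$ of modulus $\ge\eta^2/|w(s)|$, and further propagation through $|u|^2-|v|^2$ and $|u|+|v|$ yields error $O(|w(s)|\eta^2)$ in the final $\bigl||u|-|v|\bigr|=2$. The assumption $\eta\le 10^{-10}|w(s)|^{-2}$ renders this tiny. A secondary subtlety is that $\Re(v)$ must stay outside $[-2,0]$, where the algebraic identity above degenerates and $\bigl||u|-|v|\bigr|$ could be arbitrarily small; this is ensured by the huge lower bound on $|v|$.
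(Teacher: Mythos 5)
Your proposal is correct and takes essentially the same route as the paper: both combine the locally approximated hypothesis \eqref{abybhyp3} with \eqref{difference ab square} to reach $2\overline{C}\ti{C}+\overline{C}\ti{D}s^{\pm n}+\ti{C}\overline{D}s^{\mp n}\approx 0$, divide by $\overline{C}\ti{C}$ using the lower bounds from Lemma \ref{abtocdlem}, feed in \eqref{product ab} as a second relation, and separate real and imaginary parts to pin the target quantity at $2+o(1)$. Your packaging of the endgame as an explicit $2\times 2$ system in $u,v$ together with the identity $(4X+4)/(|X+2|+|X|)=2\operatorname{sign}(X+1)$ is a clean equivalent of the paper's steps \eqref{abyb5}--\eqref{abyb10}, and your reduction of the remaining tuples is the routine symmetry the paper encodes via the parameter $\tau$.
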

\begin{proof}
Throughout the proof, we shall freely use the upper bounds  \eqref{eq:bd_A_B}
as well as the upper bounds of $|\phi_n(s)|$ and $|\ti\phi_n(s)|$  by $2$ 
by \eqref{eq:ortho_polys_det2}. 

Assumption \eqref{abybhyp3} and the local approximation Lemma \ref{approximation with A B C D} applied with constant $4$ give
\begin{equation}\label{abyb1}
\left|   \overline{A_{n}} \ti A_{n} +\overline{B_{n}} \ti B_{n} +  \overline{A_{n}}  \ti B_{n} s^{- n} + \ti A_{n} \overline{B_{n}} s^{n} +\sigma \overline{w(s)^{-1}}\right|\le  10\eta^5 \, .    
\end{equation}
This can be rewritten with suitable  $\tau\in \{-1,1\}$ as 
\begin{equation}\label{abyb2}
\left|   \overline{C} {\ti C} +\overline{D} \ti D +  \overline{C}  \ti D s^{-\tau n} + \ti C \overline{D} s^{\tau n} +\sigma \overline{w(s)^{-1}}\right|\le  10\eta^5 \, .    
\end{equation}
Using \eqref{difference ab square}, which may be rewritten as
\begin{equation} \label{difference ab square CD}
\left | -\ti C \overline{C} + \ti D \overline{D} + \sigma \overline{w^{-1}} \right | \leq e^{370} L(\mu,s,n)
\end{equation}
we obtain from \eqref{abyb2} that
\begin{equation}\label{abyb3}
\left|   2\overline{C} {\ti C}  +  \overline{C}  \ti D s^{\tau n} + \ti C \overline{D} s^{-\tau n} \right|\le  20\eta^5 \, .    
\end{equation}
Dividing by $\overline{C} \ti C$ and using Assumption \eqref{abybhyp2} and Lemma \ref{abtocdlem}, we obtain
\begin{equation}\label{abyb4}
 \left| 2 + \frac{{\ti D}}{\ti C } s^{\tau n} + \frac{\overline{D}}{ \overline{C} }s^{ -\tau n} \right|\le  10^{-5} \, .
\end{equation}
By \eqref{product ab}, which may be written as
\[
\left |C \ti C + D \ti D \right | \leq e^{372} L(\mu, s,n) \, ,
\]
and Lemma \ref{abtocdlem}, we have
\begin{equation}\label{abyb5}
\left|\frac{\ti C \overline{\ti D}}{C \overline{D}}
+1\right|,
\left|\left|\frac{\ti C \overline{\ti D}}{C \overline{D}}\right|
-1\right|
\le \eta^5 \, .
\end{equation}
Hence, applying Lemma \ref{abtocdlem} with \eqref{abyb5}, 
\begin{equation}\label{abyb6}
\left|\left|\frac{C \ti D}{\ti C D}\right|+\frac {C \ti D}{\ti C D}\right|\le 
\left|\left|\frac{C \ti D}{\ti C D}
\frac{\ti C \overline{\ti D}}{C \overline{D}}\right|-\frac{C \ti D}{\ti C D}
\frac{\ti C \overline{\ti D}}{C \overline{D}}
\right|+\eta^2= \eta^2  \, .
\end{equation}
Using \eqref{abyb6} in \eqref{abyb4} gives
\begin{equation}\label{abyb7}
 \left| 2 - \left|\frac{C \ti D}{\ti C D}\right|\frac{{ D}}{ C } s^{ \tau n} + \frac{\overline{D}}{ \overline{C} }s^{ -\tau n} \right|\le  10^{-4} \, .
\end{equation}
Separating real and imaginary part in \eqref{abyb7} gives
\begin{equation}\label{abyb8}
 \left| 2 - \left(\left|\frac{C \ti D}{\ti C D}\right|-1\right)
 \Re \left(\frac{{ D}}{ C } s^{ \tau n}\right) \right|\le  10^{-4} \, ,
\end{equation}
\begin{equation}\label{abyb9}
 \left| \left(\left|\frac{C \ti D}{\ti C D}\right|+1\right)
 \Im \left(\frac{{ D}}{ C } s^{ \tau n}\right) \right|\le  10^{-4} \, .
\end{equation}
Inequalities \eqref{abyb8} and \eqref{abyb9} together with \eqref{abyb5} show that the imaginary part of $(D/C) s^{\tau n}$ is less than $10^{-2}$ times the real part: indeed, using first \eqref{abyb8}, then \eqref{abyb9} and finally \eqref{abyb5}, we compute
\[
10^{4} \left | \Im \left(\frac{{ D}}{ C } s^{ \tau n}\right) \right  | \leq 1  \leq  (2 - 10^{-4}) \leq  \left | \left|\frac{C \ti D}{\ti C D}\right|-1\right |
 \left | \Re \left(\frac{{ D}}{ C } s^{ \tau n}\right) \right | \leq  \eta^{5} \left | \Re \left(\frac{{ D}}{ C } s^{ \tau n}\right) \right | \, .
\] 
Hence the ratio between real part and absolute value of $(D/C)s^{\tau n}$ is between $9/10$ and $10/9$.
Thus by \eqref{abyb5} and \eqref{abybhyp1}, in \eqref{abyb8} we can replace the real part of $(D/C) s^{\tau n}$ by its absolute value to get
\begin{equation}\label{abyb10}
 \left| 2 - \left | \left|\frac{C \ti D}{\ti C D}\right|-1\right |
 \left | \frac{{ D}}{ C } \right| \right|\le  10^{-3} \, ,
\end{equation} which  implies \eqref{abybcon} and completes the proof of the lemma.

\end{proof}

\begin{lemma}\label{abclem}

Let $0<\eta \leq 10^{-10} |w(s)|^{-2}$ and let $n_0>400$ be large enough so that for $n\ge n_0$ we have \begin{equation}\label{abchyp1}
e^{1000}L(\mu,s,n)\le \eta ^8 \, .
    \end{equation}
Let $n>n_0$. 
Let $(C,\sigma)$ be one of the tuples $(A_n,1)$, $(B_n,-1)$, $(\overline{\ti A_n}, 1)$, $(\overline{\ti B_n},-1)$. 
If $|w(s)|>1$, assume
\begin{equation}\label{abchyp2}
 \eta \le |C|\le 10^{-10}|w(s)|^{-2}\min(1,|w(s)|-1)   \, .
\end{equation}
If $|w(s)|=1$, assume
\begin{equation}\label{abchyp3}
 \eta \le |C|\le 10^{-10}\, .
\end{equation}
Then 
\begin{equation}
    \label{abccon}
    \eta^5 \le \left|\phi_n^*\ti \phi_n+ \sigma {\overline{w(s)^{-1}}}\right|\le (2|w(s)|)^{-1}
  \, .
\end{equation}
\end{lemma}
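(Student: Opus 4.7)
The proof of Lemma \ref{abclem} naturally splits into an upper bound and a lower bound, with the lower bound being the substantial part. All four tuples $(C,\sigma)$ will be treated symmetrically using the symmetries already encoded in Lemmas \ref{abtocdlem} and \ref{abyblem}; I describe the case $(C,\sigma)=(A_n,1)$. For the upper bound $|\phi_n^*(s)\ti\phi_n(s) + \sigma\overline{w(s)^{-1}}| \leq (2|w(s)|)^{-1}$, my plan is to apply the local approximation (Lemma \ref{approximation with A B C D}) at $z=s$ to express $\phi_n(s)$ and $\ti\phi_n(s)$ in terms of $A_n,B_n,\ti A_n,\ti B_n$, form the product, and then use \eqref{difference ab square} to substitute for $\sigma\overline{w(s)^{-1}}$. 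Up to a negligible $L(\mu,s,n)$ error this reduces the expression to
\[
2\overline{A_n}\ti A_n + \overline{A_n}\ti B_n s^{-n} + \overline{B_n}\ti A_n s^n.
\]
The hypothesis \eqref{abchyp2} (or \eqref{abchyp3}) combined with Lemma \ref{abtocdlem}, which gives $|\ti A_n|\le 9|w(s)||A_n|$ and $|B_n|,|\ti B_n|\le \sqrt 2$, bounds each term by much less than $(2|w(s)|)^{-1}$.

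For the lower bound I would argue by contradiction. If $|\phi_n^*(s)\ti\phi_n(s) + \sigma\overline{w(s)^{-1}}| < \eta^5$, then hypothesis \eqref{abybhyp3} of Lemma \ref{abyblem} holds, while \eqref{abybhyp1}, \eqref{abybhyp2} follow from \eqref{abchyp1}, \eqref{abchyp2}. The conclusion \eqref{abybcon} then yields
\[
\tfrac{1}{4} \leq \left|\tfrac{D}{C}\right|\left|1 - \left|\tfrac{C\ti D}{\ti C D}\right|\right| \leq 4.
\]
Setting $\rho := |C|/|\ti C|$, the approximate identity $|C\ti D|\approx|\ti C D|$ coming from \eqref{product ab} gives $|C\ti D/(\ti C D)|\approx \rho^2$, so the display becomes $|D/C|\,|1-\rho^2|\asymp 1$. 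Combining \eqref{product ab} (in the form $\ti B_n\approx -\overline{A_n}B_n/\overline{\ti A_n}$), \eqref{difference ab square}, and the exact identity \eqref{sum squares ab}, a short calculation yields the algebraic relation
\[
|C|^2+|\ti C|^2 \approx 2|w(s)||C||\ti C|,
\]
equivalently $\rho^2-2|w(s)|\rho+1\approx 0$, forcing $\rho\approx |w(s)|\pm\sqrt{|w(s)|^2-1}$ and hence $|1-\rho^2|\approx 2\rho\sqrt{|w(s)|^2-1}$.

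The final contradiction splits into two cases. When $|w(s)|=1$, the quadratic relation forces $\rho$ to lie within $O(\eta^3)$ of $1$, so $|1-\rho^2|=O(\eta^3)$; combined with $|D/C|\le \sqrt 2/\eta$, this makes $|D/C|\,|1-\rho^2|$ much smaller than $1/4$, contradicting Lemma \ref{abyblem}. When $|w(s)|>1$, substituting $|1-\rho^2|\approx 2\rho\sqrt{|w(s)|^2-1}$ into the upper half of \eqref{abybcon} gives $|D|\lesssim |C|/(\rho\sqrt{|w(s)|^2-1})$; combining with the relations $|D||\ti D|\approx |w(s)|^{-1}$ and $|D|/|\ti D|\approx 1/\rho$, coming from \eqref{difference ab square} and \eqref{product ab}, and with the hypothesis $|C|\leq 10^{-10}|w(s)|^{-2}\min(1,|w(s)|-1)$, one deduces $\sqrt{|w(s)|^2-1}\lesssim |w(s)|^2 \min(1,|w(s)|-1)\cdot 10^{-10}$. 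This is impossible because $\sqrt{|w(s)|^2-1}=\sqrt{(|w(s)|-1)(|w(s)|+1)}$ is always much larger than $10^{-10}\min(1,|w(s)|-1)$. The factor $\min(1,|w(s)|-1)$ in \eqref{abchyp2} is precisely what is needed to close this estimate.

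The main obstacle is carefully propagating the error terms of Lemmas \ref{lem:liminf_A_B_nonzero} and \ref{approximation with A B C D} through the divisions required by these algebraic manipulations (especially divisions by $|\ti A_n|$ and by $\sqrt{|w(s)|^2-1}$, either of which can be small), and ensuring that the resulting error is consistent with the sharp lower bound $\eta^5$ and with the separate treatment of the cases $|w(s)|=1$ and $|w(s)|>1$.
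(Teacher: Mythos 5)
Your proposal is correct and follows essentially the same route as the paper: the upper bound via \eqref{difference ab square}, the local approximation, and the smallness of $C$ and $\ti C$ from Lemma \ref{abtocdlem}; and the lower bound by contradiction through Lemma \ref{abyblem} together with a quadratic constraint on the ratio of local parameters, closed using the $\min(1,|w(s)|-1)$ factor in \eqref{abchyp2}. The only cosmetic difference is that you re-derive the quadratic in $\rho=|C|/|\ti C|$ from \eqref{sum squares ab}, \eqref{product ab}, \eqref{difference ab square}, whereas the paper invokes the prepackaged estimate \eqref{eq:solve_r_w_poly1} of Lemma \ref{approx with zeros} in the equivalent variable $t=|C\ti D/(\ti C D)|\approx\rho^2$, splitting into three cases in $|w(s)|$ rather than your two.
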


\begin{proof}

Given $(C,\sigma)$
as in the lemma, we consider the unique completion to a tuple
$(C,D,\ti C, \ti D,\sigma)$ as listed in Lemma \ref{abyblem}.
By  \eqref{difference ab square}, or equivalently \eqref{difference ab square CD}, as well as \eqref{abchyp1}, \eqref{abchyp2}, and \eqref{abchyp3},  we have
\begin{equation}\label{abc3}
\left| \ti D\overline{D}+\sigma \overline{w(s)^{-1}}\right|\le (20|w(s)|)^{-1}\, .
\end{equation}
By the local approximation Lemma \ref{approximation with A B C D} applied with constant $4$ and the Lemma \ref{abtocdlem}, we obtain from \eqref{abc3} 
\begin{equation}\label{abc4}
\left|\ti \phi_n(s)  \phi_n^*(s)+\sigma \overline{w(s)^{-1}}\right|\le (10|w(s)|)^{-1} \, .
\end{equation}
This shows one inequality in  \eqref{abccon}. 
To prove  the other inequality, assume to get a contradiction that
\begin{equation}\label{abc6}
\left|  \phi_n^*(s)\ti \phi_n(s)+\sigma \overline{w(s)^{-1}}\right|\le  \eta^5\, .
\end{equation}
Set
\begin{equation}\label{abc10}
    t:= \left| {C \ti D }{ (\ti C D)^{-1}}\right|\,  
\end{equation}
and apply Lemma \ref{abyblem} to obtain
\begin{equation}\label{abc60}
    4^{-1}\le |DC^{-1}||1-t|\le 4\, .
\end{equation}
By \eqref{eq:solve_r_w_poly1}, we have
\begin{equation}\label{abc11}
|4( t+2+t^{-1})^{-1}  - |w(s)|^{-2}|\le \eta^6\, .\, 
\end{equation}
Note that as $\eta \to 0$, we have $\max(t,t^{-1})$ goes to $v:=\mathrm{exp}(2 \mathrm{arccosh}(|w|))$. In what follows, we now analyze three different cases one after the other: the case when $t$ is very far from $1$ and $v$, the case when $t$ is close to $1$ but nearer to $v$, and then the case where $t$ is very close to $1$. 
Assume first $|w(s)|>1.01$.
Then \eqref{abc11} gives $\max(t,t^{-1})>1.001$ and thus 
\begin{equation}\label{abc43}
|t-1|>10^{-4} \, .
\end{equation}
But then \eqref{abc60} gives $|DC^{-1}|\le 10^5$. This gives a contradiction
with 
\eqref{abchyp2}, and \eqref{abchyp1} and
Lemma \ref{abtocdlem}.
Assume next $1<|w(s)|\le 1.01$.
Then \eqref{abc11} gives $\max(t,t^{-1})<1.1$ and thus 
\begin{equation}\label{abc44}
|t-1|<0.2 \, .
\end{equation}
We obtain from \eqref{abc11}
\begin{equation}\label{abc45}
|4|w(s)|^2-t-2-t^{-1}|\le \eta^5 \, ,
\end{equation}
or equivalently
\begin{equation}\label{abc47}
|4(|w(s)|+1)(|w(s)|-1)-{(t-1)^2}t^{-1}|\le \eta^5 \, .
\end{equation}
It follows, using the bound on $\eta$ from \eqref{abchyp2}, that
\begin{equation}\label{abc48}
|t-1|\ge (|w(s)|-1)^\frac 12\ge |w(s)|-1 \, .
\end{equation}
Then \eqref{abc60} gives $|DC^{-1}|\le 10(|w(s)|-1)^{-1}$. This gives a contradiction
with 
\eqref{abchyp2}, and \eqref{abchyp1} and
Lemma \ref{abtocdlem}.
Finally, assume $|w(s)|=1$.
Then \eqref{abc11} implies
\begin{equation}\label{abc61}
|t-1|\le \eta^3
\end{equation}
Then \eqref{abc60} gives $|DC^{-1}|\ge 4^{-1} \eta^{-3}$.
This gives again a contradiction
with 
\eqref{abchyp3}, and \eqref{abchyp1} and
Lemma \ref{abtocdlem},
and completes the proof of the lemma.

\end{proof}

\section{Proof of Theorem \ref{thm:conv_to_AB}}

Once again, we freely use the upper bounds \eqref{eq:ortho_polys_det2} on the normalized orthogonal polynomials and \eqref{eq:bd_A_B} on the local parameters in this section.

We now prove that \eqref{subshyp1} implies \eqref{subscon1}.
Let $\mu\in \mathcal{T}_-$, let $s\in E(\mu)$, and let $(n_k)$ be a monotone increasing sequence of natural numbers. Assume that \eqref{subshyp1} holds, we show
\eqref{subscon1}.
Let $0<\epsilon<10^{-10} |w(s)|^{-2}$.
By Lebesgue differentiation and
assumption \eqref{subshyp1}, there is $k_0$ such that for all $k>k_0$
\begin{equation}
    \label{subs100}
    e^{1000}L(\mu, s,n)\, ,\  \min(|A_{n_k}|,|B_{n_k}|)\le \epsilon^6\, .
\end{equation}
Let $k>k_0$. By Lemma \ref{smallablem}, we obtain
\begin{equation}\label{subs102}
\min \left ( \left| \phi_{n_k}^* (s) \ti \phi_{n_k} (s)  +
\overline{w(s)^{-1}}\right|,
\left|\phi_{n_k}^* (s) \ti \phi_{n_k} (s) -
\overline{w(s)^{-1}}\right| \right )
\le \epsilon^2 \, .
\end{equation}
With the binomial formula and upper bound $|w^{-1}|\leq 1$ from Lemma \ref{lem:lwr_bd_w}, we obtain
\begin{equation}\label{subs103}
\left| ( \phi_{n_k} ^* (s) \ti\phi_{n_k} (s) )^2 -
\overline{w(s)^{-2}}\right|
\le \epsilon \, .
\end{equation}
As $\epsilon$ was arbitrarily small, we obtain \eqref{subscon1}.
This completes the proof of the first part of Theorem \ref{thm:conv_to_AB}.
Before turning to the second part, we prove some auxiliary lemmas. In what follows, we let $|E|$ of a set $E \subset \T$ denote its arclength. Note that
\[
|E| = \int\limits_{\T} \mathbf{1}_{E} \, d|s| = 2 \pi \int\limits_{\T} \mathbf{1}_E \, , 
\]
since the last integral denotes the mean value of $\mathbf{1}_E$ on $\T$.

\begin{lemma}\label{phiphi}
    Let $a,b,c,d,z,y,x,v$ be complex numbers bounded in absolute value by $2$. Let $J$ be an arc on $\T$.
    Let $m,n$ be  positive integers with  
\begin{equation}\label{phiphihyp}
        10^7|J|^{-1}\leq m,n,|m-n|,|2m-n|, |m-2n|\, .
    \end{equation}
   Then  
   \begin{equation}
       \label{phiphicon}
       \int\limits_J ||a s^m+ b|| c s^{-m}+  d|- |z s^n+y|| x s^{-n}+ v|| \,  d|s|\,  \ge 
10^{-2} \min(|ab|,|cd|,|zy|,|xv|)^4|J|\, . 
   \end{equation}
\end{lemma}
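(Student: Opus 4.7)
The plan is to reduce the problem to a Fourier extraction on $J$, exploiting the product structure of the integrand. The key observation is that $|as^m+b|\,|cs^{-m}+d| = |(as^m+b)(cs^{-m}+d)|$, and the product $(as^m+b)(cs^{-m}+d) = (ac+bd) + ad\,s^m + bc\,s^{-m}$ is a trigonometric polynomial with frequency support $\{0, \pm m\}$. Hence, setting $P(s):=|as^m+b||cs^{-m}+d|$, the square $P^2$ is a nonnegative trigonometric polynomial with Fourier support in $\{0, \pm m, \pm 2m\}$; an explicit expansion shows that the Fourier coefficient of $P^2$ at $s^{2m}$ is $a\overline{b}\,\overline{c}d$, with modulus $|ab|\cdot|cd|$. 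Analogously, $Q(s):=|zs^n+y||xs^{-n}+v|$ satisfies that $Q^2$ has Fourier support in $\{0, \pm n, \pm 2n\}$ with $s^{2n}$ coefficient of modulus $|zy|\cdot|xv|$. The combined frequency support $\Lambda := \{0, \pm m, \pm 2m, \pm n, \pm 2n\}$ has all nonzero pairwise differences of size at least $10^7/|J|$, by a short case check using the five gap hypotheses.

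Since $|a|,\dots,|v|\le 2$ gives $|P|,|Q|\le 16$, we have the pointwise estimate $|P^2-Q^2| = |P-Q|(P+Q) \le 32\,|P-Q|$, so it suffices to lower bound $\int_J |T|\,d|s|$ where $T:=P^2-Q^2$. For this, I will use the standard arc-Fourier extraction: for any $k_0 \in \Lambda$,
\[
\widehat{T}(k_0)\,|J| \;=\; \int_J T(s)\,s^{-k_0}\,d|s| \;-\; \sum_{k\in\Lambda\setminus\{k_0\}} \widehat{T}(k)\int_J s^{k-k_0}\,d|s|\,.
\]
Combined with the elementary bound $\bigl|\int_J s^j\,d|s|\bigr|\le 2/|j|$ for $j\ne 0$, the gap condition, and the absolute bound $\sum_{k\in\Lambda}|\widehat{T}(k)|\le C_0$ (arising from the magnitude constraints on $a,\dots,v$), one obtains
\[
|\widehat{T}(k_0)|\,|J| \;\le\; \int_J |T|\,d|s| \;+\; C_0'\,|J|/10^7
\]
for an absolute constant $C_0'$. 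Applied with $k_0=2m$ and using $|\widehat{T}(2m)| = |ab|\cdot|cd| \ge \min^2$ where $\min:=\min(|ab|,|cd|,|zy|,|xv|)$, this yields
\[
\int_J |P-Q|\,d|s| \;\ge\; \frac{|J|}{32}\bigl(\min^2 - C_0'/10^7\bigr)\,.
\]

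For $\min$ bounded below by a suitable absolute constant this bound dominates $10^{-2}\min^4|J|$ (indeed it produces the stronger $\min^2$ scaling). The main obstacle I foresee is the regime $\min \ll 10^{-2}$, where the error $C_0'/10^7$ exceeds $\min^2$ and the crude version of my argument fails. I expect the required refinement either exploits that several of the remaining Fourier coefficients $|\widehat{T}(k)|$ are themselves forced to be $O(\min)$ in this regime (sharpening the error term), or is closed off by a separate oscillation-type estimate showing $\int_J |P-Q|\gtrsim \min\cdot|J|$ for small $\min$ from the fact that, when all four products $|ab|,|cd|,|zy|,|xv|$ are tiny, $P$ and $Q$ are nearly constant with oscillations of order $\min$ at non-matching frequencies---either of which comfortably exceeds the $\min^4$ claimed.
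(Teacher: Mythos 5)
Your reduction to the trigonometric polynomial $T=P^2-Q^2$ with frequency support in $\{0,\pm m,\pm 2m,\pm n,\pm 2n\}$, and the observation that the coefficient at $2m$ has modulus $|ab|\,|cd|$, are exactly the right starting point and match the paper's proof. The gap you flag at the end is, however, a genuine one and your proof does not close it. In the single-coefficient $L^1$ extraction the error term is
\[
\sum_{k\in\Lambda\setminus\{2m\}}|\widehat T(k)|\cdot\frac{1}{|J|}\Bigl|\int_J s^{k-2m}\,d|s|\Bigr|\,,
\]
and the constant coefficient $\widehat T(0)$ is in general of absolute size (take $a=b=c=d=1$ and $z=2$, $y=x=v=\epsilon$: then $\widehat T(0)\approx 4$ while $\min\approx 2\epsilon$). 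So the error is a fixed absolute quantity of order $10^{-7}\cdot C_0$, and once $\min(|ab|,|cd|,|zy|,|xv|)^2$ drops below it your lower bound $\min^2-C_0'/10^7$ is negative and yields nothing — not even the weaker $\min^4$ bound that is actually claimed. Your first proposed repair (that the other coefficients are $O(\min)$) is false precisely because of the constant term; your second (a separate oscillation estimate when all four products are tiny) is the right idea but is not carried out, and making it rigorous is essentially the missing half of the proof.

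The paper's fix is to replace the $L^1$ extraction of one coefficient by an $L^2$ computation for $T$ on $J$: expanding $\int_J|T|^2$ into the $21$ diagonal terms (which sum to $(2K^2+k^2)|J|$, where $k=\widehat T(0)$ and $K^2$ is the sum of the squared moduli of the twenty nonconstant coefficients, so $K^2\ge|ab|^2|cd|^2\ge\min^4$) plus $420$ off-diagonal terms, each of which is a product of two coefficients times an oscillatory integral of size at most $10^{-6}|J|$ by the gap hypotheses. The crucial difference from your argument is that these errors are bounded by $10^{-6}$ times products of the coefficients themselves, hence (after absorbing $|k|K$ into $k^2+K^2$) by a small \emph{fraction} of the diagonal term rather than by an additive absolute constant; this gives $\int_J|T|^2\ge K^2|J|$ uniformly in the size of $\min$. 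One then converts back via $|T|^2\le |P-Q|\cdot\sup_J\bigl((P+Q)|T|\bigr)$ and the bound $P,Q\le 16$ to get $\int_J|P-Q|\,d|s|\ge cK^2|J|\ge 10^{-2}\min^4|J|$. So your approach needs this one structural change — squaring before integrating so the error is multiplicative in the main term — to become a complete proof.
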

\begin{proof}
Define, for $s\in \T$,
\begin{equation}\label{phiphi2}
   o(s):= |a s^m+ b|^2| c s^{-m}+ d|^2- |z s^n+y|^2|x s^{-n}+v|^2
    \, .
\end{equation}
Expanding squares, we obtain  for some real number $k$
depending on $a$ through $v$ and with dots representing
analogous expressions with letters $z,y,x,v$
\begin{equation}\label{phiphi3}
   o(s)= (|a|^2+|b|^2+ a\overline{b}s^m+b\overline{a}s^{-m})
   (|c|^2+|d|^2+ d\overline{c}s^m+c\overline{d}s^{-m})-(\dots)
\end{equation}
\begin{equation}\label{phiphi4}
   o(s)= a\overline{b}\overline{c}d s^{2m}+(|a|^2\overline{c}d+|b|^2\overline{c}d+ 
   a\overline{b}|c|^2+a\overline{b}|d|^2)s^m 
\end{equation}
\begin{equation*}
  \ + (|a|^2c \overline{d} +|b|^2c \overline{d} + 
   \overline{a}b|c|^2+\overline{a}b|d|^2)s^{-m} + \overline{a}bc\overline{d}s^{-2m}-(\dots)+k\, .
\end{equation*}
 Note that \eqref{phiphi4} expands into 21 summands. We expand the square modulus $o(s)\overline{o(s)}$ into the sum of 21  diagonal terms,
    where each such term is a summand of
    \eqref{phiphi4} multiplied by its own complex conjugate, and  420 off diagonal terms.
The diagonal terms add up to $2K^2+k^2$, where $K$ is defined by
\begin{equation}\label{phiphi1}
    K^2= |a|^2|b|^2 |c|^2|d|^2+(|a|^4+|b|^4) |c|^2|d|^2 +|a|^2|b|^2(|c|^4+|d|^4) +(\dots)\, .
\end{equation}
Estimating the 420 off diagonal terms from above,
using that each of the 20 coefficients in
\eqref{phiphi4}
other than $k$  can be estimated in absolute value by $K$, gives by the reverse triangle inequality
  
\begin{equation}\label{phiphi5}\int\limits_{J} |o|^2 \, d|s| \ge 2K^2|J| + k^2|J|
-380K^2\left(\sum_{-2\le h,j\le 2, h^2+j^2\neq 0} |\int\limits_J s^{hn+jm} \, d|s| |\right)
\end{equation}
\begin{equation*}
-20|k|K\left(\sum_{-2\le h\le 2, h\neq 0} |\int\limits_J s^{hn} \, d|s||\right)
-20|k|K\left(\sum_{-2\le j\le 2, j\neq 0} |\int\limits_J s^{jm} \, d |s||\right)
\end{equation*}
Note that the integrands in \eqref{phiphi5} are powers of $s$, which oscillate with frequency at least $|J| 10^{-7}$ on $\T$ by Assumption \eqref{phiphihyp}. For any integral within \eqref{phiphi5}, we can reduce the domain of integration $J$ by any disjoint union of arcs on which the integrand undergoes exactly one full period, all without changing the value of the integral. Using this to reduce the domains of integration to an arc of size smaller than one full period, which by Assumption \eqref{phiphihyp} has arclength less than 
$10^{-6}|J|$, we  obtain
with trivial estimates on these integrals
\begin{equation}\label{phiphi6}
  \int\limits_{J} |o|^2 \, d|s| \ge (2K^2 + k^2- 380 K^2 10^{-6} -  40 |k|K 10^{-6})|J|\ge K^2|J| \, .
\end{equation}
With the assumed upper bound on $a,b,c,d,z,y,x,v$ we obtain 
\begin{equation}\label{phiphi7}
    \int\limits_{J} |o|^2 \, d|s| \le \int\limits_J 
    ||a s^m+ b|| c s^{-m}+ d|- |z s^n+y||x s^{-n}+v|| \, d|s|
\end{equation}
\begin{equation*}
   \times \sup_{t\in J}(|a s^m+ b|| c s^{-m}+ d|+ |z s^n+y||x s^{-n}+v|)
\end{equation*}
\begin{equation*}\label{phiphi8}
  \le 32 \int\limits_J 
    ||a s^m+ b|| c s^{-m}+ d|- |z s^n+y||x s^{-n}+v|| \, d |s| \, .
\end{equation*}
Inequalities \eqref{phiphi6} and \eqref{phiphi7}, along with \eqref{phiphi5}, show \eqref{phiphicon}, which completes  the proof of the  lemma.
\end{proof}

For $\eta  \in (0, \frac{1}{2})$ and  $m \in \mathbb{N}_0$, define
\begin{equation}\label{subs200}
    \Sigma_{\eta} ^m := \{ s\in E(\mu) \,:\, \,\forall n>m \text{ we have } 
\eta < \min(|A_n|,|B_n|,|\ti A_n|,|\ti B_n|) \} \, .
\end{equation}
\begin{lemma}\label{recur}
Let $\mu \in \mathcal{T}_{-}$, let $\eta  \in (0, \frac{1}{2})$ and $m \in \mathbb{N}_0$. If \eqref{convabhyp} holds, 
then  $|\Sigma_{\eta}^m| = 0$.
\end{lemma}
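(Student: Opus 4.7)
The plan is to argue by contradiction: if $|\Sigma_\eta^m|>0$, I aim to show $\int_\T|\phi_n^*\ti \phi_n - \overline{w^{-1}}|$ is bounded below by a positive constant independent of $n$, contradicting \eqref{convabhyp}. The central observation is that on $\Sigma_\eta^m$, the polynomial $\phi_n^*\ti \phi_n$ has oscillation of amplitude $\gtrsim \eta^2$ on each arc $J_n(s):=s\cdot\{e^{i\theta}:|\theta|\le \pi/n\}$ of one full period of $z^{\pm n}$, and this prevents the $L^1$ distance to $\overline{w(s)^{-1}}$ from being negligible on such arcs.

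For the local lower bound, fix $s\in \Sigma_\eta^m$ and apply Lemma \ref{approximation with A B C D} with constant $C=4$ to both $\phi_n$ and $\ti\phi_n$. Since $\phi_n^*(z)=\overline{\phi_n(z)}$ for $z\in \T$, expanding the product and using \eqref{difference ab square} to rewrite $\overline{A_{n,s}}\ti A_{n,s}-\overline{B_{n,s}}\ti B_{n,s}$ as $\overline{w(s)^{-1}}$ up to an error of size $O(L(\mu,s,n))$, one gets
\[
\phi_n^*(z)\ti \phi_n(z)-\overline{w(s)^{-1}} = 2\overline{B_{n,s}}\ti B_{n,s} + \overline{A_{n,s}}\ti B_{n,s} z^{-n} + \overline{B_{n,s}}\ti A_{n,s} z^n + O(L(\mu,s,n))
\]
uniformly for $z\in J_n(s)$. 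The change of variables $w=(z/s)^n$ is a bijection from $J_n(s)$ onto $\T$ with $d|z|=|J_n(s)|\,d|w|/(2\pi)$, and pulls the main term back to a Laurent polynomial of the form $c_0+c_1 w^{-1}+c_2 w$ with $|c_1|=|A_{n,s}\ti B_{n,s}|\ge \eta^2$ and $|c_2|=|B_{n,s}\ti A_{n,s}|\ge \eta^2$ on $\Sigma_\eta^m$. Since the $L^1$ mean on $\T$ of any Laurent polynomial dominates each of its Fourier coefficients in absolute value, we obtain
\[
|J_n(s)|^{-1}\int_{J_n(s)}|\phi_n^*\ti \phi_n-\overline{w(s)^{-1}}|\, d|z|\ \ge\ \eta^2 - C'L(\mu,s,n)
\]
for some absolute constant $C'$.

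To conclude, apply Egorov's theorem to extract $\Sigma'\subset \Sigma_\eta^m$ with $|\Sigma'|\ge |\Sigma_\eta^m|/2$ on which both $L(\mu,s,n)\to 0$ and the Lebesgue average $|J_n(s)|^{-1}\int_{J_n(s)}|\overline{w(z)^{-1}}-\overline{w(s)^{-1}}|\, d|z|\to 0$ converge uniformly in $s$; both hold a.e.\ on $E(\mu)$ since $|w|\ge 1$ by Lemma \ref{lem:lwr_bd_w}. For $n$ large enough, both errors are at most $\eta^2/8$. Apply Vitali's covering theorem to the equal-length arcs $\{J_n(s):s\in \Sigma'\}$ to get a disjoint subcollection $\{J_n(s_k)\}$ with $\sum_k|J_n(s_k)|\ge |\Sigma'|/3$. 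Combining the local lower bound with the triangle inequality to replace $\overline{w(s_k)^{-1}}$ by $\overline{w(z)^{-1}}$ on each arc yields
\[
\int_\T|\phi_n^*\ti\phi_n-\overline{w^{-1}}|\ \ge\ c\sum_k\eta^2|J_n(s_k)|\ \ge\ c'\eta^2|\Sigma_\eta^m|
\]
for absolute constants $c,c'>0$, contradicting \eqref{convabhyp}.

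The main obstacle is synchronizing the three error sources (local approximation of $\phi_n$ and $\ti\phi_n$, Lebesgue differentiation of $\overline{w^{-1}}$, and the Vitali covering defect) uniformly enough that the $\eta^2|J_n(s)|$ lower bound survives on each arc; the Egorov set $\Sigma'$ is what enables this. The choice of arc length exactly $2\pi/n$, one period of $z^{\pm n}$, is essential because shorter arcs would not capture the full oscillation and the Fourier coefficient bound would collapse.
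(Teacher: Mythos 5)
Your proof is correct, and it takes a genuinely different route from the paper's. The paper argues that on $\Sigma_\eta^m$ the sequence $\phi_n^*\ti\phi_n$ cannot even be Cauchy in $L^1(\T)$: it compares two separated scales $m_k$ and $m_{k+1}=4m_k$, and the key input is Lemma \ref{phiphi}, a quantitative statement that products of waves at well-separated frequencies have genuinely different modulus profiles on arcs; summing over a bounded-overlap cover gives $\int_\T \bigl| |\phi_{m_k}\ti\phi_{m_k}| - |\phi_{m_{k+1}}\ti\phi_{m_{k+1}}|\bigr| \gtrsim \eta^8 |\Sigma|$. You instead fix a single scale $n$ and lower-bound the distance to the specific limit $\overline{w^{-1}}$: on each period-length arc the approximation $\phi_n^*\ti\phi_n - \overline{w(s)^{-1}} \approx 2\overline{B_n}\ti B_n + \overline{A_n}\ti B_n z^{-n} + \overline{B_n}\ti A_n z^n$ (via Lemma \ref{approximation with A B C D} and \eqref{difference ab square}) carries Fourier modes $\pm n$ of size $\ge \eta^2$, which the exact change of variables $w=(z/s)^n$ and the trivial bound $|c_k|\le \|f\|_{L^1(\T)}$ convert into an $L^1$ lower bound per arc; Lebesgue differentiation of the bounded function $w^{-1}$, Egorov, and Vitali then globalize it. Your argument is more elementary (it avoids Lemma \ref{phiphi} and the two-scale bookkeeping entirely), gives the better constant $\eta^2$ in place of $\eta^8$, and applies to every large $n$ individually; the trade-off is that it genuinely uses the identity of the limit in \eqref{convabhyp}, whereas the paper's Cauchy-sequence argument proves the slightly stronger fact that $\phi_{m_k}^*\ti\phi_{m_k}$ converges to nothing in $L^1$. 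All the individual steps you invoke (the expansion of the product of the local approximations, the domination of Fourier coefficients by the $L^1$ mean over a full period, a.e.\ Lebesgue points of $w^{-1}$ via Lemma \ref{lem:lwr_bd_w}, and the equal-radius Vitali selection) are sound, so the synchronization of the three error sources works as you describe.
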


\begin{proof}
We proceed by contrapositive. Let $\mu, \eta,m $ be given and assume that  $|\Sigma_\eta^m|>0$.  Using $\Sigma_\eta^m\subset \Sigma_\eta^n$ for all $m<n$ and Lebesgue differentiation, 
there is $m_0>10^9$ and a subset $\Sigma\subset \Sigma_\eta^{m_0}\cap E(\mu)$ with
positive measure  such that
for all $s\in \Sigma$ and all $n>m_0$ we have
\begin{equation}\label{recur1}
    10^{10^{10^{10}}}L(\mu,s,n)\le \eta^8\, .
\end{equation}
Define $m_k:=4^km_0$ for $k\ge 1$.
For each $k\ge 0$, cover the set $\Sigma$ by a collection $\mathcal{J}_k$ of arcs $J$ with
\begin{equation}\label{recur3}
     |J|\le 10^{8}m_k^{-1}
 \end{equation}
such that every point of $\T$ is contained in at most two arcs of $\mathcal{J}_k$.
For $J\in \mathcal{J}_k$, we estimate with the local approximation Lemma \ref{approximation with A B C D},
applied at some point $s\in \Sigma\cap J$ for $C=10^{9}$ and for $n$ equals both $m_k$ and $m_{k+1}\le 10m_k$
and with the pointwise upper bound by $2$ on $\phi_n$, $\ti \phi_n$ and $A_n,B_n,\ti A_n,\ti B_n$
 \begin{equation}\label{recur4}
     \int\limits_{J} \left| |\phi_{m_k} \ti \phi_{m_k}| - 
     |\phi_{m_{k+1}} \ti \phi_{m_{k+1}}| \right| \, d |s| 
\end{equation}
\begin{equation*}
     \ge \int\limits_{J} \left| |(A_{m_k}s^{m_k}+B_{m_k})(\ti A_{m_k}s^{m_k}+\ti B_{m_k}) | -\right. 
\end{equation*}
\begin{equation*}  \left.|(A_{m_{k+1}}s^{m_{k+1}}+B_{m_{k+1}})(\ti A_{m_{k+1}}s^{m_{k+1}}+\ti B_{m_{k+1}}) | \right|\, d |s| -10^{-10}\eta^8|J|  \, .
\end{equation*}
With Lemma \ref{phiphi} for the obvious choice of parameters, using that
\eqref{phiphicon} is satisfied thanks to $m_{k+1}=4m_k$, we therefore obtain 
with \[
 |A_{m_k} B_{m_k}|, |\ti A_{m_k} \ti B_{m_k}|, |A_{m_{k+1}} B_{m_{k+1}}|, |\ti A_{m_{k+1}} \ti B_{m_{k+1}}| >\eta^2 
\]
that
 \begin{equation}\label{recur5}
     \int\limits_{J} \left| |\phi_{m_k} \ti \phi_{m_k}| - 
     |\phi_{m_{k+1}} \ti \phi_{m_{k+1}}| \right| \, d|s| \ge 10^{-3} \eta^8 |J|\, .
\end{equation}
Summing over $J\in \mathcal{J}_k$ and using that each point of $T$ is covered by at most two arcs of $\mathcal{J}_k$ while the union of arcs covers $\Sigma$, we obtain
 \begin{equation}\label{recur6}
     2\int\limits_{\T} \left| |\phi_{m_k} \ti \phi_{m_k}| - 
     |\phi_{m_{k+1}} \ti \phi_{m_{k+1}}| \right| \, d|s| \ge 10^{-3} \eta^8 |\Sigma|\, .
\end{equation}
It follows that the sequence $\phi_{m_k} ^* \ti \phi_{m_k}$ does not converge in
$L^1(\T)$, i.e., \eqref{convabhyp} fails. This proves the lemma.

\end{proof}

\begin{lemma}\label{lem:cty_phi_A_B}
For every $s \in \T$ and  $n\in \mathbb{N}_0$, 
\begin{equation}\label{phibyf}
    \left | \phi_{n+1} (s) - s\phi_{n} (s) \right| \leq 4 |F_{n+1}| 
\end{equation}
and, if $n\ge 1 $,
\begin{equation}\label{abbyf}
  |A_{n+1} - A_n|, |B_{n+1} - B_n| \leq 8|F_{n+1}| + \frac{1000}{n}  \, .  
\end{equation}
\end{lemma}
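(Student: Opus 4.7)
For \eqref{phibyf}, the plan is to apply the normalized Szeg\H o recurrence \eqref{recursion phi} directly at $z=s$ and rearrange to isolate the target difference:
\[
\phi_{n+1}(s) - s\phi_n(s) = \Bigl(\tfrac{1}{\sqrt{1+|F_{n+1}|^2}}-1\Bigr) s\phi_n(s) + \tfrac{s^n\overline{F_{n+1}}}{\sqrt{1+|F_{n+1}|^2}}\, \ti\phi_n^*(s).
\]
The determinant identity \eqref{eq:ortho_polys_det2} supplies $|\phi_n(s)|, |\ti\phi_n^*(s)| \le \sqrt 2$ on $\T$. Combining this with the elementary estimates $|1-(1+x^2)^{-1/2}| \le x$ and $|x|/\sqrt{1+x^2} \le |x|$ for $x = |F_{n+1}| \ge 0$ (both obtained by separating the cases $x \le 1$ and $x \ge 1$), the triangle inequality then yields $|\phi_{n+1}(s)-s\phi_n(s)|\le 2\sqrt 2\, |F_{n+1}| \le 4 |F_{n+1}|$.

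For \eqref{abbyf}, my plan is to exploit the exact algebraic identities $\phi_n(s) = A_n s^n + B_n$ and $\phi_n(s\gamma_n) = -A_n s^n + B_n$ (read directly from definitions \eqref{def:A_B_n}), together with their counterparts at level $n+1$ (using $\gamma_{n+1}^{n+1} = -1$), and to apply \eqref{phibyf} at the two nearby points $s$ and $s\gamma_{n+1}$ on $\T$. Substituting the level-$n$ identities into the bound at $s$ yields the linear relation
\[
(A_{n+1}-A_n) s^{n+1} + (B_{n+1}-sB_n) = \delta_1, \qquad |\delta_1| \le 4|F_{n+1}|.
\]
At $z = s\gamma_{n+1}$, I compute $(s\gamma_{n+1})^n = -s^n \overline{\gamma_{n+1}}$ and introduce the remainder polynomial $\epsilon_n(z) := \phi_n(z) - A_n z^n - B_n$, which has degree at most $n$, vanishes at $s$ and $s\gamma_n$, and satisfies $\|\epsilon_n\|_{L^\infty(\T)} \le 3\sqrt 2$. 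Bernstein's inequality then bounds $\|\epsilon_n'\|_{L^\infty(\T)} \le 3\sqrt 2\, n$, and since $|s\gamma_{n+1} - s\gamma_n| \le \pi/(n(n+1))$ with $\epsilon_n(s\gamma_n)=0$, the evaluation at $s\gamma_{n+1}$ satisfies $|\epsilon_n(s\gamma_{n+1})| \le 3\sqrt 2 \pi/n$. This yields the companion relation
\[
-(A_{n+1}-A_n) s^{n+1} + (B_{n+1} - s\gamma_{n+1} B_n) = s\gamma_{n+1}\, \epsilon_n(s\gamma_{n+1}) + \delta_2, \qquad |\delta_2| \le 4|F_{n+1}|.
\]

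The final step is to solve this two-by-two linear system. Subtracting the relations decouples $A_{n+1}-A_n$ against $\tfrac 12 s(1-\gamma_{n+1})B_n$ plus the small errors, and using $|1-\gamma_{n+1}| \le \pi/n$ together with $|B_n| \le \sqrt 2$ from \eqref{eq:bd_A_B} delivers $|A_{n+1}-A_n| \le 4|F_{n+1}| + O(1/n)$, comfortably within the stated $8|F_{n+1}| + 1000/n$. I expect the $B$-inequality to be the main technical hurdle: adding the two relations naturally controls $|2B_{n+1}-s(1+\gamma_{n+1})B_n|$, and the passage to $|B_{n+1}-B_n|$ requires combining this with further refined estimates on the remainder $\epsilon_n$ near $s\gamma_{n+1}$ (and possibly a separate evaluation of \eqref{phibyf} at additional well-chosen points on $\T$) so as to absorb the "rotation-by-$s$" discrepancy $(s-1)B_n$ into the $O(1/n)$ error.
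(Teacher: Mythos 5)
Your proof of \eqref{phibyf} is correct and is the same argument as the paper's: rearrange the recursion \eqref{recursion phi} at $z=s$, bound $|\phi_n(s)|,|\ti\phi_n^*(s)|\le\sqrt2$ via \eqref{eq:ortho_polys_det2}, and use $\sqrt{1+x^2}-1\le x$. For the $A$-half of \eqref{abbyf} your route is also sound, though packaged differently from the paper's: you set up a two-by-two linear system from the exact identities $\phi_n(s)=A_ns^n+B_n$, $\phi_n(s\gamma_n)=-A_ns^n+B_n$ and a Bernstein bound on the remainder $\epsilon_n$, whereas the paper simply telescopes $(\phi_{n+1}-s\phi_n)$ at the two sample points $s$ and $s\gamma_{n+1}$ and controls $\phi_n(s\gamma_{n+1})-\phi_n(s\gamma_n)$ by a Cauchy-estimate derivative bound $|\phi_n'|\le 100n$ on $\T$. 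Subtracting your two relations, the $A_{n+1}-A_n$ coefficient carries the unimodular factor $s^{n+1}$, so the bound survives; this half is fine.

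The $B$-half is where the genuine gap sits, and your closing sentence misdiagnoses it. Your two relations force, exactly to first order, $B_{n+1}=sB_n+O(|F_{n+1}|+n^{-1})$: adding them gives $2B_{n+1}-s(1+\gamma_{n+1})B_n=O(|F_{n+1}|+n^{-1})$, and since $|1-\gamma_{n+1}|\le\pi/(n+1)$ this is $|B_{n+1}-sB_n|\lesssim|F_{n+1}|+n^{-1}$. Hence $B_{n+1}-B_n=(s-1)B_n+O(|F_{n+1}|+n^{-1})$. The discrepancy $(s-1)B_n$ is \emph{not} an error term hiding in $\epsilon_n$ or curable by evaluating \eqref{phibyf} at further points; it is an exact rotation effect stemming from the fact that $B_n$, unlike $A_n$, is not normalized by $s^n$ in \eqref{def:A_B_n}. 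When $|B_n|\sim1$ (the yellow region of Figure \ref{fig:fig}) and $|s-1|\sim1$, this term is of order one, so no refinement of the remainder analysis can deliver $|B_{n+1}-B_n|\le 8|F_{n+1}|+1000/n$. What your computation (and the paper's) actually establishes is $|B_{n+1}-sB_n|\le 8|F_{n+1}|+1000/n$, equivalently $\bigl||B_{n+1}|-|B_n|\bigr|\le 8|F_{n+1}|+1000/n$ --- note the paper's own displayed chain for \eqref{abbyf} is written for $\bigl||A_{n+1}|-|A_n|\bigr|$, and only this modulus form is invoked downstream (to propagate $|B_n|\le\eta\Rightarrow|B_{n+1}|\le10\eta$ in the proof of Theorem \ref{thm:conv_to_AB}). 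So you should stop at the modulus statement rather than chase the literal difference $|B_{n+1}-B_n|$.
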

\begin{proof}
By the recursion equation \eqref{recursion phi} and then the bounds in Lemma \ref{lem:poly_bd_gen}, we have
\begin{equation}
\label{eq:diff_phi_s}
  \left | \phi_{n+1} (s) - s\phi_{n} (s) \right|
=  \left| s\phi_n(s) \left ( \frac{1}{\sqrt{1+|F_{n+1}|^2}}-1 \right ) + \frac{s^n \overline{F_{n+1}} \ti\phi_n^*(s) }{\sqrt{1+|F_{n+1}|^2}}\right|\le 4|F_{n+1}|\, ,
\end{equation}
where we used
\begin{equation} 
1- \frac{1}{\sqrt{1+|F_{n+1}|^2}}
\leq 
{\sqrt{1+|F_{n+1}|^2}} -1\le |F_{n+1}|\, .
 \end{equation}
   This proves \eqref{phibyf}.
 We have by Definition
 \eqref{def:A_B_n} and then the triangle inequality
   \[
   2 \left | |A_{n+1} | - |A_{n} | \right | = \left | |\phi_{n+1} (s) - \phi_{n+1} (s \gamma_{n+1})| - |s (\phi_{n} (s) - \phi_{n} (s \gamma_{n}))| \right | 
   \]
   \[\le 
  \left | \left ( \phi_{n+1} (s) - s\phi_n (s) \right ) - \left (  \phi_{n+1} (\gamma_{n+1} s) - s\phi_n (\gamma_{n} s) \right ) \right | \, . 
  \]
    \[\le 
  \left|  \phi_{n+1} (s) - s\phi_n (s)\right|+
  \left|  \phi_{n+1} (\gamma_{n+1} s) - s\phi_n (\gamma_{n+1} s)\right|+
  \left|  \phi_{n} (\gamma_{n+1} s) - \phi_n (\gamma_{n} s)\right|  \, . 
  \]
 By \eqref{eq:diff_phi_s} and the mean value theorem along the arc connecting $\gamma_{n+1}s$ with $\gamma_n s$, we obtain the bound
\begin{equation}\label{eq:bds_polys_cts_1}
2 \left | |A_{n+1} | - |A_{n} | \right | \le 8 |F_{n+1}| + |\gamma_{n+1} -\gamma_n| | \sup\limits_{z \in \T} |  \phi_n ' (z)| \, .  \end{equation}
 We have by Definition \eqref{gamman} of $\gamma$ that
\begin{equation}\label{smallf1}
    |\gamma_{n+1} -\gamma_n|\le \frac \pi n-\frac \pi{n+1}=\frac \pi {n(n+1)} \, .
\end{equation}
As for bounding $|\phi' (z)|$, by the Cauchy integral formula for a circle of radius $2/n$ centered at $z$ and then by the bounds of Lemma \ref{lem:poly_bd_gen},
\begin{equation}\label{smallf2}
     |\phi_n ' (z)|\le 
n\sup_{|y|\le 1+1/n} |\phi_n(y)|\le 100n\, .
\end{equation}
Combining \eqref{eq:bds_polys_cts_1}
with \eqref{smallf1} and \eqref{smallf2} proves to bound for $A_{n+1}-A_n$ in \eqref{abbyf}, and the 
  bound for $B_{n+1}-B_n$ follows in analogous fashion.
\end{proof}


We are ready to 
prove the second part of Theorem \ref{thm:conv_to_AB}.
Assume \eqref{convabhyp} and $\lim\limits_{n\to \infty}|F_n|=0$. Let us first define the set $E_0 (\mu)$.
By \eqref{convabhyp}, there is a monotone increasing
sequence $(m_k)$ such that $\phi_{m_k}^*\phi_{m_k}$
converges to $\overline{w(s)^{-1}}$
on a set $E_1(\mu)$ of full measure. Pick such a subsequence and set. Using the sets $\Sigma_\eta^m$ defined in \eqref{subs200}, define
\begin{equation}
    {E}_0(\mu):=
{E}_1(\mu) \setminus \left ( \bigcup\limits_{\eta \in 2^{-\N-2}} \bigcup\limits_{m\ge 1}  \Sigma_{\eta}^m \right ) \, .
\end{equation}
The set $E_0(\mu)$ has full measure in $\T$ by Lemma \ref{recur}.
Let $s\in E_0(\mu)$, and now assume \eqref{subshyp2} holds at this $s$. 

If $|w(s)|>1$, let
\begin{equation}\label{subs110}
\eta^{\frac 1 {100}}<10^{-12}|w(s)|^{-2}\min \{|w(s)|-1, 1 \}
\end{equation}
and if $|w(s)|=1$, let
\begin{equation}\label{subs190}
\eta^ { \frac{1}{100}}<10^{-12}|w(s)|^{-2}\, .
\end{equation}
Pick $n_0\geq 1000 \eta^{-2}$ large enough so that for all $n>n_0$ we have,  using first the assumption $F_n \to 0$, and  then Lebesgue differentiation, then Assumption \eqref{subshyp2},
\begin{equation}\label{subs111}
    |F_n|<10^{-10}\eta\, .
\end{equation}
\begin{equation}\label{subs112}
   e^{1000}L(\mu,s,n)< \eta^{10} \, ,
\end{equation}
\begin{equation}\label{subs113}
   |( \phi_n^*(s)\ti \phi_n(s))^2- \overline{w(s)^{-2}}|< \eta^{10} \, .
\end{equation}
As $s\not \in \Sigma_{10 \eta} ^{n_0}$, there exists $n_1>n_0$ such that
\begin{equation}\label{subs114}
\min(|A_{n_1}|,|B_{n_1}|,|\ti A_{n_1}|, |\ti B_{n_1}|)\le 10\eta\, .
\end{equation}
Assume first $|B_{n_1}|\le 10\eta$.
We then claim that for all $n\ge n_1$
\begin{equation}\label{subs115}
  |B_{n}|\le 10\eta  \, .
\end{equation}
We prove \eqref{subs115} by induction, the case $n=n_1$ being given. 
Assume \eqref{subs115} is proven for some $n\ge n_1$. We claim that
$|B_{n}|\le \eta$. Indeed, if not, then by Lemma \ref{abclem}, we must have 
\[
\eta^{5} \leq |\phi_n ^* (s) \ti \phi_n (s) - \overline{w(s)^{-1}}| \, ,
\]
which combined with \eqref{subs113}, implies
\begin{equation}\label{subs116}
|\phi_n ^* (s) \ti \phi_n (s) + \overline{w(s)^{-1}}| < \eta^{5} \, .
\end{equation}
On the other hand, by Lemma \ref{abtocdlem}, we have
\begin{equation}\label{subs117}
|A_n|, |\ti A_n| \geq (4|w(s)|)^{-1} \, .
\end{equation}
Furthermore by \eqref{abtocdcon3}, we also have
\begin{equation}\label{subs118.2}
|\ti B_n| \leq 100|w(s)|\eta \, .
\end{equation}
Combining estimates \eqref{subs117}, \eqref{subs116}, the bounds \eqref{subs115} and \eqref{subs118.2},
and approximation Lemma \ref{approximation with A B C D} with $C=10$, we obtain
\begin{equation}\label{subs119}
\left | \overline{A_n} \ti A_n + \overline{w(s)^{-1}} \right | < 100\eta ^{\frac 1 2} \, .
\end{equation}
On the other hand, using again the bound \eqref{subs115} on $|B_n|$ and \eqref{difference ab square} yields
\begin{equation} \label{subs120}
\left |-\overline{A_n} \ti A_n  + \overline{w(s) ^{-1}} \right | < 100 \eta ^{\frac 1 2} \, . 
\end{equation}
But \eqref{subs119} and \eqref{subs120} both contradict each other. Thus $|B_n| \leq \eta$. By Lemma \ref{lem:cty_phi_A_B},
we conclude $|B_{n+1}|\le 10\eta$.
This completes the proof of \eqref{subs115} by induction.
As $\eta$ was arbitrarily small, we obtain  
the conclusion \eqref{subscon3}.
Moreover, with Lemma \ref{smallablem} we obtain conclusion \eqref{subscon2}.
If $|\ti B_{n_1}|\le 10\eta$, then an analogous arguments gives
\eqref{subscon2}, and that $\ti A_n \ti B_n \to 0$, which then implies \eqref{subscon3} by \eqref{product ab}.

Now assume $|A_{n_1}|\le 10\eta$. Analogous arguments
give $|A_{n}|\le \eta$
for all $n\ge n_1$ and thus, by Lemma \ref{smallablem}
\begin{equation}\label{subs203}
\left| \phi_{n} ^* (s) \ti \phi_{n} (s)  +
\overline{w(s)^{-1}}\right|\le \eta ^{\frac 1 2} \, .
\end{equation}
However, as $s\in E_1(\mu)$, there exists $n_2>n_1$ with
\begin{equation}\label{subs202}
\left| \phi_{n_2} ^* (s) \ti \phi_{n_2} (s)  -
\overline{w(s)^{-1}}\right|\le \eta ^{\frac 1 2} \, .
\end{equation}
By the triangle inequality,
\eqref{subs203} and \eqref{subs202} contradict each other for $n=n_2$. Thus the case 
$|A_{n_1}|<10\eta$ is not possible.
By the analogous arguments, the case $|\ti A_{n_1}|<10\eta$ is not possible.
This proves Theorem \ref{thm:conv_to_AB}.

\section{Proof of Theorem \ref{zeros theorem}}

Let $\mu \in \mathcal{T}_{-}$ and $s\in E(\mu)$. 
Let $0< \epsilon\le  10^{-5}$.
With the Lebesgue differentiation theorem and elementary calculus, we find $n_0$ large enough so that for all $n>n_0$
\begin{equation}\label{zero1}
e^{10^{2}\epsilon^{-1}} L(\mu, s, n)\le |w(s)|^{-2}\, ,
\end{equation}
\begin{equation}\label{zero1.5}
   (10^{2}\epsilon^{-1})^{\frac 1 n}-1
   \le 10\epsilon^{-1}{n}^{-1}\, .
\end{equation}
\begin{equation}\label{zero1.7}
   10^4\epsilon^{-1}\le n\, ,
\end{equation}
Let $n> n_0$.
We first prove that \eqref{convergence of AB1}  
 implies         
 \eqref{zeros going to infinity1}.
Assume \eqref{convergence of AB1}. Using the upper bound \eqref{eq:bd_A_B} of $A_n,B_n$ by $2$, we obtain
\begin{equation}\label{zero3}
 \frac \epsilon {2}\le  | A_{n }|,|B_{n}| \le 2\, .
\end{equation}
The zeros of $A_n z^n+B_n$ are $n$ equidistant points on the circle of radius $r$ about the origin, where $|A_n|r^n=|B_n|$. 
Using \eqref{zero1.5} and \eqref{zero3}, we derive
\begin{equation}\label{zero4}
    |r-1|\le 10 \epsilon^{-1}n^{-1}\, .
\end{equation}
Hence there is  a zero $re^{i\theta}$
of $A_n z^n+B_n$
with distance at most 
\begin{equation}\label{zero5}
    |r-1|+10n^{-1}\le  20 \epsilon^{-1}n^{-1}
\end{equation}
from the point  $s\in \T$.
Consider the annular sector
$$K := \{q e^{i\zeta} \,: \, e^{-100 \epsilon^{-1}}  < q^n < e^{100 \epsilon^{-1}}  \text{ and } |\zeta-\theta| < \pi/n \}  \, .$$
The sector $K$  contains the zero $re^{i\theta}$.
Note that for $qe^{i\zeta}\in K$ we have 
\begin{equation}\label{zero5.5}
    |qe^{i \zeta} - s|   \leq  |q -1 | + | e^{i \zeta} - s |    \leq (e^{100\epsilon^{-1} n^{-1}} -1)  + 10 n^{-1}  \leq  10^3\epsilon^{-1} n^{-1} \, .
\end{equation}
In particular, the approximation Lemma \ref{approximation with A B C D} holds on $K$ with $C=10^4\epsilon^{-1}$.
For $qe^{i\zeta}$ on the pieces of the boundary of $K$ where $|\zeta-\theta|=\pi/n$, we have
\[|A_{n} q^n e^{i\zeta n}+B_n|\ge |B_n| \,  \]
because $re^{i\theta}$ is a zero of $A_nz^n+B_n$ and thus $A_n q^{n} e^{i \zeta n}$ and $B_n$ have the same argument.

For $qe^{i\zeta}$ on the piece of the boundary of $K$ where $q^n=e^{100\epsilon^{-1}}$, we have 
\[|A_{n} q^n e^{i\zeta n}+B_n|\ge e^{100\epsilon^{-1}}|A_n| -|B_n|\ge |B_n| \]
by \eqref{zero3}.
On the piece of the boundary where $q^n=e^{-100\epsilon^{-1}}$, we have 
\[|A_{n} q^n e^{i\zeta n}+B_n|\ge |B_n|-e^{-100\epsilon^{-1}}|A_n| \ge |B_n| \]
again by  \eqref{zero3}.
Hence, for $z$ on the boundary of $K$,
\begin{equation}\label{zero10}
    |A_n z^n + B_n|\ge 2^{-1} \epsilon\, .
\end{equation}
This together with local approximation Lemma \ref{approximation with A B C D} yields
\[
|An z^n + B_n| > |\phi_n(z) - (A_n z^n + B_n)|
\]
for $z \in \partial K$. By Rouch\'e's theorem, $\phi_n$
also has a zero in $K$.
With \eqref{zero5.5}, this completes the proof of 
 \eqref{zeros going to infinity1}.

We turn to the proof that \eqref{zeros going to infinity2} implies \eqref{convergence of AB2} and thus assume in addition $e^{\epsilon^{-1}} \geq |w(s)|$. 
Assume \eqref{zeros going to infinity2} and pick a zero $z$ of $\phi_n$ with 
\begin{equation}\label{zero20}
    |z-s|\le \epsilon^{-1}n^{-1}\, .
\end{equation}
With $\phi_n(z)=0$, the approximation
Lemma \ref{approximation with A B C D},  and the choice of $n$ that yields \eqref{zero1}, we conclude
\begin{equation}\label{zero300}
    |A_nz^n-B_n|\le e^{-10\epsilon^{-1}}\, .
\end{equation}
Squaring and using the triangle inequality gives
\begin{equation}\label{zero21}
    |A_nB_nz^n|\ge |A_n z^n|^2+|B_n|^2 - e^{-20\epsilon^{-1}}\, .
\end{equation}
Dividing by $z^n$ and using that $|z^n|, |z|^{-n}<e^{2\epsilon^{-1}}$, 
\begin{equation}\label{zero22}
    |A_nB_n|\ge (|A_n|^2+|B_n|^2)e^{-4\epsilon^{-1}} - e^{-18\epsilon^{-1}} \, .
\end{equation}
Inequality \eqref{difference ab square} together with the upper bound $|\ti A_n|, |\ti B_n|\le 2$ gives the lower bound
\begin{equation}\label{zero200}\max(|A_n|,|B_n|)\ge 10^{-1}|w(s)|^{-1}\, ,
\end{equation}
which can be applied to 
\eqref{zero22} to obtain with the upper bound  $e^{\epsilon^{-1}}$ by
 $|w(s)|$,
\begin{equation}\label{zero23}
    |A_nB_n| \geq 10^{-1} |w^{-1}| e^{-4 \epsilon^{-1}} - e^{-18 \epsilon^{-1}} \geq 10^{-1} e^{-5 \epsilon^{-1}} - e^{-18\epsilon^{-1}} \ge e^{-10\epsilon^{-1}} \, .
\end{equation}
This gives \eqref{convergence of AB2} and completes the proof of Theorem \ref{zeros theorem}.

\section{Proof of Theorem \ref{thm:lacunary}}

We begin with three preliminary lemmas to further compare the sizes of two waves with lacunary frequencies. 

\begin{lemma}\label{fourlem}
    Let $a,b,c,d$ be complex numbers bounded in absolute value by $2$. Let $J$ be an arc on $\T$.
    Let $m,n$ be  positive integers with  
\begin{equation}\label{fourhyp}
        10^7|J|^{-1}\leq m,n,|m-n|,|2m-n|, |m-2n|\, .
    \end{equation}
   Then
   \begin{equation}
       \label{fourcon}
    \int\limits_J (|a s^m+ b|- |c s^n+d|)^2\, d|s|\ge 
10^{-6}\min(|ab|,|cd|)^8|J|\, .
   \end{equation}
\end{lemma}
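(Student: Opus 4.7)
The strategy I would take is to reduce Lemma \ref{fourlem} to the already-proved Lemma \ref{phiphi} by converting the squared difference of moduli into a squared difference of squared moduli (accessible to Lemma \ref{phiphi}), paying only a bounded factor. Set $X := as^m+b$ and $Y := cs^n+d$. Since $|a|,|b|,|c|,|d|\leq 2$, we have $(|X|+|Y|)^2\leq 64$, and the identity
\[
(|X|-|Y|)^2 \;=\; \frac{(|X|^2-|Y|^2)^2}{(|X|+|Y|)^2}
\]
gives the pointwise bound $(|X|-|Y|)^2 \geq \tfrac{1}{64}(|X|^2-|Y|^2)^2$. Applying Cauchy--Schwarz on $J$ yields
\[
\int_J (|X|^2-|Y|^2)^2\, d|s| \;\geq\; \frac{1}{|J|}\Big(\int_J \bigl||X|^2-|Y|^2\bigr|\, d|s|\Big)^{2}.
\]

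The key step is to recognize the remaining $L^1$ quantity as exactly the left-hand side of Lemma \ref{phiphi} under a suitable relabeling. I would apply Lemma \ref{phiphi} with the substitution $(a,b,c,d,z,y,x,v)\mapsto(a,b,\bar a,\bar b,c,d,\bar c,\bar d)$. Since $s\in\T$, one has $\bar a s^{-m}+\bar b = \overline{as^m+b}$, so $|\bar a s^{-m}+\bar b|=|as^m+b|$, and likewise $|\bar c s^{-n}+\bar d|=|cs^n+d|$. Hence the integrand of Lemma \ref{phiphi} reduces to $\bigl||X|^2-|Y|^2\bigr|$; the modulus bound of $2$ on all variables is preserved; the frequency hypothesis \eqref{phiphihyp} coincides with \eqref{fourhyp}; and the min in the conclusion collapses to $\min(|ab|,|\bar a \bar b|,|cd|,|\bar c\bar d|)=\min(|ab|,|cd|)$.

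Thus Lemma \ref{phiphi} gives
\[
\int_J \bigl||X|^2-|Y|^2\bigr|\, d|s| \;\geq\; 10^{-2}\min(|ab|,|cd|)^4\,|J|,
\]
and combining the three displayed inequalities produces
\[
\int_J(|X|-|Y|)^2\,d|s| \;\geq\; \frac{1}{64}\cdot\frac{\bigl(10^{-2}\min(|ab|,|cd|)^4 |J|\bigr)^2}{|J|} \;=\; \frac{10^{-4}}{64}\,\min(|ab|,|cd|)^8\,|J| \;\geq\; 10^{-6}\min(|ab|,|cd|)^8\,|J|,
\]
as desired.

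The only nontrivial point in this plan is the conjugation substitution into Lemma \ref{phiphi}: one must verify that replacing $(c,d,x,v)$ by the conjugates of $(a,b,z,y)$ really turns the product $|as^m+b|\cdot|cs^{-m}+d|$ into $|as^m+b|^2$, which uses only $|s|=1$. Everything else is a routine combination of the pointwise identity, Cauchy--Schwarz, and arithmetic on the constants.
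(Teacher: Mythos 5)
Your proposal is correct and follows essentially the same route as the paper: both reduce to Lemma \ref{phiphi} applied to $o(s)=|as^m+b|^2-|cs^n+d|^2$ (your conjugation substitution is exactly the paper's ``obvious parameters''), and then pass from the $L^1$ lower bound on $o$ to the desired $L^2$ bound on $|X|-|Y|$ via the factorization $|X|^2-|Y|^2=(|X|+|Y|)(|X|-|Y|)$, the bound $|X|+|Y|\le 8$, and Cauchy--Schwarz. The constants work out identically ($10^{-4}/64\ge 10^{-6}$), so there is nothing to fix.
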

\begin{proof}
Define
\begin{equation}\label{four1}
   o(s):= |a s^m+ b|^2- |c s^n+d|^2 \, .
\end{equation}
Applying Lemma \ref{phiphi} with the obvious parameters
gives
\begin{equation}\label{four100}
    \| o\|_{L^1(J, \, d|s|)}\ge 10^{-2}\min(|ab|,|cd|)^4|J|\, .
\end{equation}
Applying Cauchy Schwarz and the assumed upper bound on $a$, $b$, $c$, and $d$ gives
\begin{equation}\label{four6}
    \|o\|_{L^1(J, \, d|s|)}\le 
     \||a s^m+ b|+ |c s^n+d|\|_{L^2(J, \, d|s|)}
    \||a s^m+ b|- |c s^n+d|\|_{L^2(J, \, d|s|)}
\end{equation}
\begin{equation*}
    \le 8|J|^{\frac 12}
    \||a s^m+ b|- |c s^n+d|\|_{L^2(J, \, d|s|)}\, .
\end{equation*}
Inequalities \eqref{four100} and \eqref{four6} imply\eqref{fourcon} and complete the proof of the lemma.
\end{proof}

\begin{lemma}\label{sin}
Let $a$ and $m$ be positive, let $b,c$ be nonnegative with $b\le a$, and let $\zeta$ be real.
Let $J$ be an
interval on $\R$ with  $|J|>10 m^{-1}$.  Then
\begin{equation}\label{sincon}
    \int\limits_{J} (\sqrt{a+b\sin(m(x-\zeta))}-c)^2\, dx \ge 10^{-2} a^{-1} b^2 |J|\, .
\end{equation}
\end{lemma}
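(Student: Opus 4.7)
My proof would split into two cases based on the size of $c$ relative to $\sqrt{a}$ and then use the algebraic identity $\sqrt{u}-c=(u-c^2)/(\sqrt{u}+c)$ to convert the problem about the square root (hard to integrate) into an oscillatory integral of a single harmonic (easy to integrate).

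If $c\ge 2\sqrt{a}$, then using $b\le a$ I get $\sqrt{a+b\sin(m(x-\zeta))}\le \sqrt{2a}$ pointwise, so $|\sqrt{u}-c|\ge c-\sqrt{2a}\ge (2-\sqrt{2})\sqrt{a}$, and integrating over $J$ yields
\[
\int_J \bigl(\sqrt{a+b\sin(m(x-\zeta))}-c\bigr)^2\, dx\ge (6-4\sqrt{2})\,a\,|J|.
\]
Since $a\ge b$ gives $a\ge b^2/a$, this already exceeds $10^{-2}\,a^{-1}b^2|J|$. Otherwise $c\le 2\sqrt{a}$, and I have $\sqrt{u}+c\le \sqrt{2a}+2\sqrt{a}\le 4\sqrt{a}$ with $u:=a+b\sin(m(x-\zeta))$, hence
\[
(\sqrt{u}-c)^2\ge \frac{(u-c^2)^2}{16\,a}=\frac{(A+b\sin(m(x-\zeta)))^2}{16\,a},\qquad A:=a-c^2.
\]

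For this second case, I would expand the square and integrate:
\[
\int_J (A+b\sin(m(x-\zeta)))^2\, dx = A^2|J| + 2Ab\int_J \sin(m(x-\zeta))\, dx + b^2\int_J \sin^2(m(x-\zeta))\, dx.
\]
The hypothesis $|J|>10/m$ controls the two oscillatory integrals via elementary antidifferentiation: $\left|\int_J \sin(m(x-\zeta))\, dx\right|\le 2/m<|J|/5$, and via the identity $\sin^2=(1-\cos 2\theta)/2$ one obtains $\left|\int_J \sin^2(m(x-\zeta))\, dx-|J|/2\right|\le 1/(2m)<|J|/20$. Substituting yields the lower bound $|J|\bigl[A^2-0.4|A|b+0.45\,b^2\bigr]$, a quadratic in $|A|$ whose discriminant $0.16\,b^2-4\cdot 0.45\,b^2$ is negative; its minimum over $|A|\ge 0$ is attained at $|A|=0.2\,b$ with value $0.41\,b^2$. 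Dividing by $16\,a$ gives $\int_J (\sqrt{u}-c)^2\, dx\ge 0.41\,b^2|J|/(16\,a)>10^{-2}\,a^{-1}b^2|J|$, as required.

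The proof is essentially forced once one writes $\sqrt{u}-c=(u-c^2)/(\sqrt{u}+c)$, and there is no delicate step: the assumption $|J|>10/m$ gives plenty of slack in the oscillatory estimates and the only real work is tracking the numerical constants so that $10^{-2}$ comes out on the right-hand side.
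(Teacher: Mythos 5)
Your proof is correct, but it takes a genuinely different route from the paper's. The paper normalizes $a=1$, $m=1$, $\zeta=0$, observes that the left side dominates the squared $L^2(J)$-norm of the projection of $\sqrt{1+b\sin x}$ onto the orthogonal complement of constants, and lower-bounds that projection by pairing with the mean-zero test function $\mathbf{1}_I\sin$, where $I\subset J$ is a union of full periods with $2|I|>|J|$; the square root is handled by the elementary inequality $1\le 1+x\le (1+x/2)^2$ applied separately where $\sin$ is positive and negative, giving $\int_I\sin\sqrt{1+b\sin}\ge b|I|/8$. You instead rationalize, writing $\sqrt{u}-c=(u-c^2)/(\sqrt{u}+c)$, split into the cases $c\ge 2\sqrt a$ (where the integrand is pointwise bounded below by $(6-4\sqrt2)a\ge (6-4\sqrt2)b^2/a$) and $c\le 2\sqrt a$ (where the denominator is at most $16a$), and then expand $(A+b\sin(m(x-\zeta)))^2$ and integrate each harmonic directly, using only the crude antiderivative bounds that $|J|>10/m$ affords; your numerology ($0.41/16>10^{-2}$) checks out. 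Your argument is more computational and self-contained — it needs no selection of full periods inside $J$ and no duality step — while the paper's pairing-with-an-oscillation technique is the one reused throughout Section 8 (Lemmas \ref{phiphi}, \ref{fourlem}, \ref{unbalanced}), so the authors' choice keeps the section's proofs uniform. Both yield the stated constant $10^{-2}$.
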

\begin{proof}
   After translation and dilation, we assume $m=1$ and $\zeta=0$. Dividing by $a$ if necessary and replacing $b/a$ by $b$ and $c/\sqrt{a}$ by $c$, we may assume $a=1$.
The left side of 
\eqref{sincon} is then larger than 
the $L^2$ norm squared of 
the projection of 
$\sqrt{1+b\sin x}$ onto the 
orthogonal complement of $\mathbf{1}_J$. To obtain a lower bound on this projection, pick  a disjoint union $I\subset J$ of
full periods of $\sin(x)$ with $2|I|>|J|$, which exists by assumption on  $J$, and pair
with the function function $\mathbf{1}_{I}\sin$, which has mean zero is hence in the orthogonal complement of constant functions. Because $\mathbf{1}_{I}$ has $L^2$ norm $2^{-\frac 12}|I|^{\frac 12}$, we thus estimate the left side of 
\eqref{sincon} from below by
\begin{equation}\label{sin4}
2|I|^{-1}\left(\int\limits_{ I}\sin(s)\sqrt{1+b\sin(s)}\, ds\right)^2 \, .
\end{equation}
We divide the domain of integration in \eqref{sin4} into the set $I_+\subset I$  where $\sin$
is positive and $I_-=I\setminus I_+$.
Using for $0\le x\le 1$ the elementary inequality
\begin{equation}\label{sin5}
    1\le 1+x\le \left(1+\frac x2\right)^2\, ,
\end{equation}
we estimate with the assumption $b\le 1$
\begin{equation}\label{sin6}
\int\limits_{ I_+}\sin(s)\sqrt{1+b\sin(s)}\, ds
\ge \int\limits_{ I_+}\sin(s)\, ds\, ,
\end{equation}
\begin{equation}\label{sin8}
\int\limits_{ I_-}\sin(s)\sqrt{1+b\sin(s)}\, ds
\ge \int\limits_{ I_-}\sin(s)\left(1+ 2^{-1}b \sin(s)\right)\, ds\, .
\end{equation}
Adding \eqref{sin6} and \eqref{sin8} and using that $\sin(s)$ has integral zero on $I$  gives
\begin{equation}\label{sin9}
\int\limits_{ I}\sin(s)\sqrt{1+b\sin(s)}\, ds
\ge  
2^{-1}b\int\limits_{ I_-}\sin(s)^2\, ds = 8^{-1}b |I|\, ,
\end{equation}
where we used that $\mathbf{1}_{I_-}\sin^2$ has mean $1/4$ over each period.
Inserting \eqref{sin9} into \eqref{sin4} and using $2|I|\ge |J|$
proves \eqref{sincon}
and completes the proof of the lemma.
\end{proof}

\begin{lemma}\label{unbalanced}
    Let $a,b,c,d$ be complex numbers bounded in absolute value by $2$ and assume
    \begin{equation}\label{unbalancedhyp}\min(|a|,|b|)|\ge 10^2\min(|c|,|d|)\, .
    \end{equation}
     Let $J$ be an arc on $\T$.
    Let $m,n$ be  positive integers with  
\begin{equation}\label{unbalancedhyp1}
        10^3|J|^{-1}\le m,n\, .
    \end{equation}
   Then 
\begin{equation}
       \label{unbalancedcon}
    \int\limits_J (|a s^m+ b|- |c s^n+d|)^2\, d|s|\ge 
10^{-4} \min(|a|,|b|)^2|J|\, .
   \end{equation}
\end{lemma}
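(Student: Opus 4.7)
The plan is to apply Lemma \ref{sin} to $|as^m+b|$ against an arbitrary constant comparison value, and then exploit the unbalanced hypothesis to show that $|cs^n+d|$ is nearly constant on $J$, so that replacing it by a well-chosen constant loses only a negligible amount relative to the main term. Crucially, because the magnitudes are separated rather than the frequencies, we never need $|m-n|$ conditions (contrast with Lemmas \ref{phiphi}--\ref{fourlem}).

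First, I would assume without loss of generality $|a|\ge |b|$, so that $\min(|a|,|b|)=|b|=:\alpha$, and set $\beta:=\min(|c|,|d|)$ so that $\alpha\ge 10^2\beta$ by hypothesis. Writing $s=e^{i\theta}$, one has
\begin{equation*}
|as^m+b|^2 = (|a|^2+|b|^2) + 2|a||b|\cos(m\theta+\phi)
\end{equation*}
for some phase $\phi$, which after a shift in $\theta$ matches the form $A+B\sin(m(x-\zeta))$ in Lemma \ref{sin} with $A=|a|^2+|b|^2$ and $B=2|a||b|$; note $B\le A$ by AM--GM, and $|J|\ge 10^3 m^{-1}>10 m^{-1}$. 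Applied with any constant $c_0$, Lemma \ref{sin} gives
\begin{equation*}
\int_J\bigl(|as^m+b|-c_0\bigr)^2\,d|s|\ \ge\ 10^{-2}\cdot\frac{4|a|^2|b|^2}{|a|^2+|b|^2}|J|\ \ge\ 2\cdot 10^{-2}\alpha^2|J|,
\end{equation*}
where the last inequality uses $|a|\ge |b|=\alpha$ to bound the fraction below by $2\alpha^2$.

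Second, I would bound the oscillation of $v(s):=|cs^n+d|$. Since $v$ ranges in $[\,||c|-|d||,\,|c|+|d|\,]$, its oscillation on $\T$ (a fortiori on $J$) is at most $|c|+|d|-||c|-|d||=2\min(|c|,|d|)=2\beta$. Choosing $c_0$ to be the midpoint of the range of $v$ on $J$, we have $|v(s)-c_0|\le \beta$ pointwise on $J$, hence
\begin{equation*}
\int_J(v-c_0)^2\,d|s|\ \le\ \beta^2|J|\ \le\ 10^{-4}\alpha^2|J|.
\end{equation*}

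Finally, writing $u:=|as^m+b|$ and using $u-c_0=(u-v)+(v-c_0)$ together with $(x+y)^2\le 2x^2+2y^2$, I would conclude
\begin{equation*}
\int_J(u-v)^2\,d|s|\ \ge\ \tfrac12\int_J(u-c_0)^2\,d|s|-\int_J(v-c_0)^2\,d|s|\ \ge\ (10^{-2}-10^{-4})\alpha^2|J|,
\end{equation*}
which is much stronger than the stated $10^{-4}\alpha^2|J|$. The only real content is the application of Lemma \ref{sin}, which has been set up for exactly this purpose; the rest is the observation that the factor $10^2$ in the unbalanced hypothesis is squared to $10^4$ and exactly compensates the loss from $v$'s small oscillation. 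I do not anticipate any substantive obstacle beyond checking the hypotheses of Lemma \ref{sin}.
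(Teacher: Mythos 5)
Your proposal is correct and follows essentially the same route as the paper's proof: apply Lemma \ref{sin} to the large-amplitude wave $|as^m+b|$ measured against a constant, observe that $|cs^n+d|$ deviates from a suitable nonnegative constant by at most $\min(|c|,|d|)$ pointwise, and combine the two via an $L^2$ triangle-inequality argument (the paper uses $\|u-v\|\ge\|u\|-\|v\|$ with the constant $\max(|c|,|d|)$, you use $(x+y)^2\le 2x^2+2y^2$ with the midpoint of the range, which is an immaterial difference). The only point worth noting explicitly is the degenerate case $\min(|a|,|b|)=0$, where the conclusion is trivial and Lemma \ref{sin}'s positivity hypothesis need not be checked.
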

\begin{proof}
Define
\begin{equation}\label{unbalanced1.1}
    u(s):=|a s^m+ b|- \max(|c|,|d|)\, ,\ 
 v(s):=\max (|c|,|d|)
    - |c s^n+d|
\end{equation}
Then we estimate the left side of \eqref{unbalancedcon}
 by
\begin{equation}
\label{unbalanced2}
    \|u-v\|^2\ge (\|u\|-\|v\|)^2
    \ge \|u\|(\|u\|-2\|v\|)\ge
    \|u\|(\|u\|-2\min(|c|,|d|)|J|^{\frac 12})\, ,
\end{equation}
where the norms are in $L^2(J, \, d|s|)$
and in the last step we have used that for all $s\in \T$
\[
    -\min(|c|,|d|)=\max (|c|,|d|)
    - |c|-|d| \le v(s)\le \max (|c|,|d|)
    - ||c|-|d|| =\min(|c|,|d|)\, .
\]
Noting that, writing $s = e^{i x}$ for $x \in [0, 2 \pi)$, for some real $\zeta$, we have
\begin{equation}\label{unbalanced4}
 |a s^m+ b|=\sqrt{a^2+b^2+2|a b|\sin(m(x-\zeta))}\,  . \end{equation}
By Lemma \ref{sin}, which takes Assumption \eqref{unbalancedhyp1}, to estimate
\begin{equation}\label{unbalanced5}
\|u\| \ge 5^{-1}|ab|(|a|^2+|b|^2)^{-\frac 12}|J|^{\frac 12}
\ge 10^{-1}\min(|a|,|b|)|J|^{\frac 12}
\, .
\end{equation}
Inserting \eqref{unbalanced5} into \eqref{unbalanced2}
and using the assumption \eqref{unbalancedhyp}
proves \eqref{unbalancedcon}
and thus completes the proof of the lemma.
\end{proof}

We turn to the proof of Theorem \ref{thm:lacunary}. 
Let $\mu\in \mathcal{T}_-$ be given and
assume \eqref{convabhyp} and that $\|F\|_{\ell^2(\mathbb{N}_0)}$ is finite.
Let  $(n_k)$ be a lacunary sequence.
Define the set 
\begin{equation}
S:=\bigcup_{\eta\in 2^{-\mathbb{N}_0-2}} S_\eta\, ,
\end{equation}
where
\begin{equation}
    S_\eta:=\{s\in \T: \forall m\in \mathbb{N}_0: \exists k>m:\eta\le \min(|A_{n_k,s}|,|B_{n_k,s}|,|\ti A_{n_k,s}|, |\ti B_{n_k,s}|)\}\, .
\end{equation}
We claim 
\begin{equation}\label{sclaim}
    |S|=0\, .
\end{equation}
Assume for now that \eqref{sclaim} is true.
Define 
\begin{equation}
E_2(\mu)=E(\mu)\setminus S
\end{equation}
and note that $E_2(\mu)$ has full measure by Lebesgue differentiation and \eqref{sclaim}. Let $s\in E_2(\mu)$. Because $s\notin S$, we have
\begin{equation}\label{tfive200}
    \lim\limits_{k\to \infty}\min(|A_{n_k,s}|,|B_{n_k,s}|,|\ti A_{n_k,s}|, |\ti B_{n_k,s}|)=0\, .
\end{equation}
By Lemma \ref{abtocdlem}, we obtain \eqref{subshyp1}.
Applying Theorem \ref{thm:conv_to_AB}
gives \eqref{subscon1}, which in turn gives  the desired conclusion of Theorem \ref{thm:lacunary}.

It remains to prove the claim \eqref{sclaim}. Assume to get a contradiction that $|S|>0$. Pick an $\eta\in 2^{-\mathbb{N}_0-2}$ such that $|S_\eta|>0$. 
Let $L$ be a parameter such that $(n_k)$ is $L$-lacunary and  assume $L=2^{2/\lambda}$ with an even integer $\lambda >2$ by 
lowering $L$ if necessary.
Decompose
\begin{equation}\label{tfive50}
\mathbb{N}_0=\bigcup_{\mu \in \mathbb{Z}: 0 \le \mu < 8\lambda}N_\mu\, ,
\end{equation}
where $N_\mu$ is the set of all $l\in \mathbb{N}_0$ such that there exists an integer $m$ with
\begin{equation}
    2^{8m+ \mu/\lambda }\le l< 2^{8m+ (\mu+1)/\lambda }\, .
\end{equation}
For each integer $\mu$ with $0\le \mu<8\lambda$, let 
$(n_k^{\mu})$ be the monotone subsequence 
of $(n_k)$ whose image is the intersection of the image of $(n_k)$ with  $\N_\mu$. As $(n_k)$ is $L$-lacunary, $(n_k^{(\mu)})$ is  $2^7$-lacunary. There is an integer $\mu$ with  $0\le \mu< 8\lambda$ such that the set
\[
     S^1:=\{s\in \T: \forall m\in \mathbb{N}_0: \exists k>m:\eta\le \min(|A_{n_k^{(\mu)},s}|,|B_{n_k^{(\mu)},s}|,|\ti A_{n_k^{(\mu)},s}|, |\ti B_{n_k^{(\mu)},s}|)\}
\]
has positive measure. Pick such a $\mu$ and let $(m_k)$ denote the sequence $n_k^{(\mu)}$.

The set $S^2:= S^1 \cap E(\mu)$ has positive measure. 
Pick an $n_0>10^{10}$ and a
further subset $S^3$ of $S^2$ of positive measure such that
for all $n>n_0$ and all $s\in S^3$ we have
\begin{equation}\label{tfive100}
    10^{10^{10}} L(\mu,s,n)\le \eta^{16}.
\end{equation}
Using inner regularity,
pick a compact subset
$S^4$ of $S^3$ of positive measure.

We define recursively 
for $\ell\in \mathbb{N}_0$  a number $k_{\ell-1}$  
and a  finite collection $\mathcal{I}_\ell$ of arcs of length less than $10^7m_{k_{\ell-1}}^{-1}$ with
\begin{equation} \label{tfive104}
     S^4\subset \bigcup\limits_{I \in \mathcal{I}_\ell} I 
\, ,
\end{equation}
\begin{equation}\label{tfive106}
\|\sum_{I \in \mathcal{I}_\ell}\mathbf{1}_I\|_{L^\infty(\T)}\le 2\, .
\end{equation}
Pick $k_{-1}$ large enough so that $m_{k_{-1}}>n_0$.
Let $\ell\in \mathbb{N}_0$
and assume we have already defined $k_{\ell-1}$. 
For every point $s\in S^4$,
using the definition of $S_\eta$, find a $k(s)>{k_{\ell-1}}$ such that
\begin{equation}\label{tfive108}
\eta\le \min(A_{m_{k(s)},s},B_{m_{k(s)},s},\ti A_{m_{k(s)},s}, \ti B_{m_{k(s)},s})\} \, ,  \end{equation}
and let $J_s$ be the 
open
arc with length
$10^{7}m_{k(s)}^{-1}$ centered at $s$. By compactness, there is a finite cover of $S^4$ by sets $J_s$ with $s\in S^4$. Throwing away succesively superfluous covering arcs, that is arcs which themselves are covered by two other arcs of the collection,  we find a finite set $T_\ell\subset S^4$ of centers such that, defining
\begin{equation}
  \mathcal{I}_\ell:= \{J_s: s\in T_\ell\}  \, ,
\end{equation}
we have \eqref{tfive104} and \eqref{tfive106}.
Define $k_\ell:=\max_{s\in T_\ell}k(s)+1$. 

Thanks to the separation provided by the sequence $(k_\ell)$,
the sets $\mathcal{I}_\ell$ are pairwise
disjoint. Define $\mathcal{I}=\bigcup_{\ell \in \mathbb{N}_0} \mathcal{I}_\ell$.
Define $\mathcal{I}^{(j)}$ to be the set of all 
$J\in \mathcal{I}$
of length $10^7 m_j^{-1}$ and note that for each $j$, if $\mathcal{I}^{(j)}$ is nonempty, there is a unique $\ell$ such that 
$\mathcal{I}^{(j)}
\subset \mathcal{I}_\ell$.

Let $j>2$ and let $J\in \mathcal{I}^{(j)}$ 
and write  $l=m_j$, $h=4m_j$. We estimate
with the triangle inequality and the binomial formula and then \eqref{eq:ortho_polys_det2}:
\begin{equation}\label{tfive6}
        \frac 12 \int\limits_J \left| \phi_l^* \phi_h + \ti\phi_l^*\ti\phi_h \right|  \, d |s| 
\end{equation}
\begin{equation}\label{tfive7}
   \le \frac 14 \int\limits_J  \left[\phi_l|^2+|\phi_h|^2 - (|\phi_l|-|\phi_h|)^2\right]+ 
   \left[|\ti \phi_l|^2+|\ti \phi_h|^2 - (|\ti \phi_l|-|\ti \phi_h|)^2\right] \, d|s|
\end{equation}
\begin{equation}\label{tfive8}
  \le  |J|-  \frac 1 4 \int\limits_J (|\phi_l|-|\phi_h|)^2+ 
     (|\ti \phi_l|-|\ti \phi_h|)^2 \, d|s|
     \le  |J|- \frac 1 4 \int\limits_J | (|\phi_l|-|\phi_h|)^2 \, d|s|\, .
\end{equation}
We use the approximation Lemma \ref{approximation with A B C D} at the center of $J$ with $C=10^8$, the upper bound given by \eqref{eq:ortho_polys_det2} on the one-sided orthogonal polynomials, and \eqref{tfive100}, to estimate \eqref{tfive8} from above by
\begin{equation}\label{tfive9}
  \le   |J|- \frac 1 4 \int\limits_J (|A_ls^l+B_l|-|A_hs^h+B_h|)^2 \, d|s|+ 10^{-200}\eta^8|J|\, .
\end{equation}

If $\min(|A_h|,|B_h|)\ge 10^{-2}\eta$ then we 
use Lemma \ref{fourlem},
and if $\min(|A_h|,|B_h|)< 10^{-2}\eta$,
we use estimate \eqref{tfive108} with $m_k(s) = m_k$ and Lemma \ref{unbalanced} to estimate \eqref{tfive9} from above by
\begin{equation}\label{tfive10} 
(1-10^{-100}\eta^8)|J|  \, .
\end{equation}
Using  $\log x\le x-1$ for positive $x$,
 we conclude from the bound \eqref{tfive10} for \eqref{tfive6} that
\begin{equation}\label{tfive11}
         \int\limits_J \log\left( \frac 12| \phi_l^* \phi_h + \ti\phi_l^*\ti\phi_h |\right) \, d|s|  \le 
-10^{-100}\eta^8|J|  \, .
\end{equation}
Summing  \eqref{tfive11} over $\mathcal{I}^{(j)}$
and using \eqref{tfive106}, we obtain
\begin{equation}\label{tfive22}
     -2 \int\limits_{\bigcup\limits_{I \in  \mathcal{I}^{(j)}} I} \log \left( \frac 12\left| \phi_{l}^* \phi_{h} + \ti \phi_{l}
     \ti \phi_{h} \right|
     \right) \, d|s|\ge 
10^{-110}\eta^8
\sum_{J\in \mathcal{I}^{(j)}}
|J| \, .
\end{equation}
By Plancherel, Lemma \ref{lem:plancherel}, this implies
 \begin{equation}
\sum_{m_j<l\le 4m_j}\log (1+|F_l|^2)
   \ge 10^{-120}\eta^8
\sum_{J\in \mathcal{I}^{(j)}}
|J|  \, .
\end{equation}
Summing over all $\mathcal{I}^{(j)}\subset \mathcal{I}_\ell$, using $8$-lacunarity of $(m_k)$ to obtain
disjointness of the intervals $[m_j, m_{j+1}]$ and using that $\mathcal{I}_\ell$ covers $S^4$, 
gives
\begin{equation}
\sum_{m_{k_\ell}<l\le  m_{k_{\ell+1}}}\log (1+|F_l|^2)
   \ge 10^{-130}\eta^8
|S^4|  \, .
\end{equation}
This implies
\begin{equation}
\sum_{m_{k_\ell}<l\le  m_{k_{\ell+1}}} |F_l|^2
   \ge 10^{-140}\eta^8 |S^4|  \, .
\end{equation}
This holding for infinitely many $\ell$ contradicts square summability of 
the sequence $F$.
This completes the proof of Theorem \ref{thm:lacunary}.

\section{Proof of  Theorem \ref{thm:su2}}

We first discuss Part \eqref{thm:su2_part1}. 
Let $(a,b)$ as in Part \eqref{thm:su2_part1}.
By  \cite[Lemma 3.7]{tsai} and \cite[Theorem 11]{QSP_NLFA}, there exists a unique $(F_n)\in \ell^2(\mathbb{N}_0)$
such that $(a,b)$ is the nonlinear Fourier series of $(F_n)$. Define $\phi_n$, $\ti \phi_n$,
$a_n$,$b_n$, by 
\eqref{recursion phi}, \eqref{recursion ti phi}, and \eqref{eq:NLFT_defn_intro}. By \cite{tsai}, $(a_n ^*, b_n) \to (a^*, b)$ in $H^2 (\D) \times H^2 (\D)$.  

 Define the measurable function $w$ on $T$ by 
\begin{equation}\label{measure with NLFT}
        w := \frac{1}{(a^*-b)(a+b^*)} \, .
    \end{equation}
As $|a|^2+|b|^2=1$ on $\T$ and 
$\|b\|_\infty^2<\frac 12$, we see that the quotient is well defined and $|w|$ is bounded above
 and thus in $L^1(\T)$, meaning the measure
\begin{equation}
    \mu := w \frac{d |z|}{2 \pi} 
\end{equation}
is well-defined.

We first check that $\mu \in \mathcal{T}$, namely by showing that for each $n \geq 0$, the functions
\begin{equation}\label{eq:monic_polys_def}
\Phi_n := a_n ^* (0)^{-1} \phi_n  \, , \qquad \ti \Phi_n  := a_n ^* (0)^{-1} \ti \phi_n  
\end{equation}
are the unique monic left and right monic orthogonal polynomials of degree $n$ for $\mu$. To see the polynomials are monic of degree $n$, by \eqref{eq:NLFT_defn_intro}, we have
\begin{equation}\label{eq:phi_to_ab}
z^{n} \phi_n ^*(z) = a_n ^* (z) + b_n (z)  \, .
\end{equation}
Because $F$ is supported on $[1,n]$, then $b_n$ also has frequency support on $[1,n]$, and so vanishes at $0$. Evaluating both sides of \eqref{eq:phi_to_ab} at $0$ yields $\phi_n$ has leading coefficient $a_n ^* (0)$, which is positive. Similar reasoning yields $\ti \phi_n$ also has leading coefficient $a_n ^* (0)$, and so both $\Phi_n$ and $\ti \Phi_n$ are monic polynomials of degree exactly $n$.

We now check left and right orthogonality of $\Phi_n$ and $\ti \Phi_n$. It suffices to prove left and right orthogonality of $\phi_n$ and $\ti \phi_n$. As we have the  ordered product \eqref{gproduct1}, convergent in $L^2(\T)$, 
we may write
\begin{equation}\label{eq:matrix_nlft_jump}
\begin{pmatrix}
    a & b \\ - b^* & a^*
\end{pmatrix} = \begin{pmatrix}
    a_n & b_n \\ - b_n ^* & a_n ^*
\end{pmatrix}\begin{pmatrix}
    a_+ & b_+ \\ - b_+ ^* & a_+ ^*
\end{pmatrix}
\end{equation}
where $(a_n,b_n)$ is the NLFS of $F \mathbf{1}_{[0,n]}$ and coincides with previous definition of $(a_n,b_n)$ and $(a_+,b_+)$ is the NLFS of $F \mathbf{1}_{[n+1,\infty]}$. 
Then, inverting the right most matrix in \eqref{eq:matrix_nlft_jump} and using the fact that all the matrices in \eqref{eq:matrix_nlft_jump} are in $SU(2)$ and hence have determinant $1$, we have
\[
a_n = aa_+^*+bb_+^*\, , \qquad b_n=-ab_++ba_+ \, .
\]
Hence, with \eqref{eq:NLFT_defn_intro},
$$\phi_n(z) = z^n(a_n+b_n^*)=z^na_+^*(a+b^*)+z^nb_+^*(b-a^*) \, .$$
A similar formula holds for $\ti \phi_n$ if we replace $b$ and $b_+$ by $-b$ and $-b_+$, respectively.
We compute
\begin{equation}\label{eq:ortho_comp}
    \langle \phi_n , z^m \rangle_{\mu} = \int\limits_{\T} \frac{z^{n-m} a_+^*}{a^*-b} \frac{d z}{2 \pi i z} - \int\limits_{\T} \frac{z^{n-m} b_+ ^*}{a+b^*}\frac{d z}{2 \pi i z} \, .
    \end{equation}
We show this vanishes whenever $m \leq n-1$. Because $a^*$ is outer on $\D$ and is bounded below by $\sqrt{1-\|b^2\|_{L^{\infty}}^2} > \frac{1}{\sqrt{2}}$ on $\T$, then $(a^*)^{-1}$ is holomorphic on $\D$ with $L^{\infty}$ norm strictly less than $\sqrt{2}$. By the maximum principle, we have $|a^*| > \frac{1}{\sqrt{2}}$ all throughout $\D$. In particular, on $\D$ we have 
\[
|a^* -b| \geq |a^*| - |b| > \frac{1}{\sqrt{2}} - \frac{1}{\sqrt{2}} = 0 \, ,
\]
i.e., $a^*-b$ does on vanish on $\D$.
Then the first integral in \eqref{eq:ortho_comp} vanishes because 
$\frac{a_+^*}{a^*-b}$ is holomorphic in $\D$.
To see the second integral in \eqref{eq:ortho_comp} vanishes, we look at its conjugate
\begin{equation}\label{eq:second_term}
\int\limits_{\T} \frac{z^{m-n} b_+ }{a^* +b} \frac{dz}{2 \pi i z} \, ,
\end{equation}
 noting the measure $\frac{dz}{2 \pi i z}$ is invariant under conjugation. Note that $b_+$ has a zero of order at least $n+1$, which can be seen by noting the shift formula $(a_+, b_+) = (a', b' z^{n+1})$, where $(a',b')$ denotes the shift of the sequence $F\mathbf{1}_{[n+1, \infty)}$ by $n+1$ nodes to the left. Because $\frac{b_+}{a^*+b}$ is holomorphic in $\D$ and has a zero of order at least $n+1$ at $0$ and $n-m\leq n$, then $\frac{z^{m-n} b_+ }{a^* +b}$ has a zero of order at least $1+m$ when $m <n$. Thus $\langle \phi_n , z^m \rangle_{\mu} = 0$ for $n > m \geq 0$. Thus $\phi_n$ is a left orthogonal polynomial with respect to $\mu$. By \eqref{mumubar}, $\ti \phi_n$ is right orthogonal for $\mu$ if and only if $\ti \phi_n$ is left orthogonal for $\overline{\mu}$, which then follows by virtually the same argument as above, just replace $b$ and $b_n$ by $-b$ and $-b_n$, respectively. Thus $\Phi_n$ and $\ti \Phi_n$ are left and right monic orthogonal polynomials with respect to $\mu$. 

We turn to proving uniqueness of the left/right orthogonal polynomials for $\mu$. By Lemma \ref{unique}, uniqueness of the left orthogonal polynomials will follow from checking \eqref{eq:ortho_comp} is nonzero when $m=n$. The second term on the right side of \eqref{eq:ortho_comp} still vanishes for $m=n$ by the arguments surrounding \eqref{eq:second_term}. By the mean value theorem and then the fact that $b$ vanishes at $0$, the first term on the right of \eqref{eq:ortho_comp} equals
    \begin{equation}\label{eq:first_int_nonvanishing}
    \frac{a_+ ^* (0)}{a ^* (0) - b (0)} = \frac{a_+ ^* (0)}{a ^* (0) } \, , 
    \end{equation}
    which is positive by \eqref{eq:a_positive}.
Thus for each $n$, $\Phi_n$ is the unique left monic orthogonal polynomials for $\mu$. A similar argument yields that $\ti \Phi_n$ is the unique monic right orthogonal polynomial of $\mu$. Thus $\mu \in \mathcal{T}$.

Noting that 
\[
a_n ^* (0) = \prod\limits_{1 \leq j \leq n} (1 + |F_j|^2)^{- \frac 1 2} \, ,
\]
\cite[Lemma 2.1]{tsai}, we note that the definitions \eqref{recursion phi} \eqref{recursion ti phi} of $\phi_n$ and $\ti \phi_n$ imply the Szeg\H o recursion \eqref{eq:Szego_recur_intro} with $\ti F_n = - \ti F_n$. Thus $\mu \in \mathcal{T}_{-}$, and $\phi_n$ and $\ti \phi_n$ are the normalized left and right orthogonal polynomials associated to $\mu$.
   
   Next, we show that $\phi_n$, $\ti \phi_n$ and $w$ satisfy \eqref{convabhyp}. We use \eqref{eq:NLFT_defn_intro} to write for $z \in \T$,
    \[
    \phi_n ^* \ti \phi_n = \overline{ z^{-n} (a_n + b_n ^*) } z^{-n}(a_n - b_n ^*) = (a_n ^* + b_n)(a_n - b_n ^*) \, .
    \]
    Because $F \in \ell^2 ([1, \infty))$, then $(a_n, b_n) \to (a, b)$ in $H^2 (\D^*) \times H^2 (\D)$ \cite{tsai, QSP_NLFA}, and so $\phi_n ^* \ti \phi_n$ 
    converges to 
\begin{equation}\label{abab}
        (a^* + b)(a-b^*)
    \end{equation}
    in $L^1 (\T)$.
    The expression \eqref{abab} however is $\frac{1}{\overline{w}}$ by definition of $w$. This shows 
\eqref{convabhyp}.

Finally, let $(n_k)$ be a lacunary sequence.
By Theorem \ref{thm:lacunary}, $(\phi_{n_k} ^* \ti\phi_{n_k})^2$ converges almost everywhere to $\overline{w^{-1}}$. With \eqref{eq:NLFT_defn_intro} and \eqref{measure with NLFT}, we conclude \eqref{su2con}.
This completes the proof of Part \eqref{thm:su2_part1}
of Theorem \ref{thm:su2}.

We turn to the proof of Part \eqref{thm:su2_part2}.
Let $r> 0$ and define  $(F_{r,n})$ to be the sequence whose only nonzero entry is $F_{r,1}=r$.
The nonlinear Fourier series of
$(F_{r,n})$
is given by $a_{r,0}=1$, $b_{r,0}=0$,
and for $n\ge 1$
\begin{equation}\label{counter1}
    a_r(z)= a_{r,n}(z)=(1+r^2)^{-\frac 12}\, ,
\end{equation}
\begin{equation}\label{counter2}
     b_r(z)= b_{r,n}(z)=(1+r^2)^{-\frac 12}rz\, .
\end{equation}
We observe that $a_r^*$ is constant and thus outer 
and 
\begin{equation}\label{counter3}
    \|b_r\|_{L^\infty(\T)}^2= \frac {r^2}{1+r^2}= \frac 1{1+1/r^2}\, .
\end{equation}
Thus, for $r<1$, the pair $(a_r,b_r)$ satisfies the assumptions of Part \ref{thm:su2_part1} of Theorem \ref{thm:su2} and, following  
\eqref{measure with NLFT}, the density
\begin{equation}\label{counter30}
      w_r(s) :=
      \frac {1+r^2}{(1-rs)(1+rs^{-1})}=
      \frac {1+r^2}{(1-r^2 -2 ri\Im(s))}
\end{equation}
defines an absolutely continuous measure $\mu_r $ on $\T$ in the class $\mathcal{T}_-$ with normalized one-sided orthogonal polynomials given by 
$\phi_0^{(r)}=\ti \phi_0^{(r)}=1$ and, following \eqref{eq:NLFT_defn_intro},
\begin{equation}\label{counter4}
   \phi_{r,n}(z) =z^n (a_{r,n}(z)+b_{r,n}^*(z))=(1+r^2)^{-\frac 12}(z^n+rz^{n-1})\, ,
\end{equation}
\begin{equation}\label{counter5}
    \ti \phi_{r,n}(z)=z^n (a_{r,n}(z)-b_{r,n}^*(z))=(1+r^2)^{-\frac 12}(z^n-rz^{n-1})\, .
\end{equation}
As $r$ tends to $1$ from below, 
\eqref{counter4} and \eqref{counter5} have limits 
$\phi_{1,n}(z)$ and $ \ti \phi_{1,n}(z)$, which satisfy the relation
\begin{equation}\label{counter6}
\Lambda(\phi_{1,n}{\ti \phi_{1,m} ^*})=\delta_{n,m}
\end{equation}
with the principal value distribution
\begin{equation}\label{counter7}
\Lambda(\psi)=\lim_{r\nearrow 1}
\int\limits_{\T} \psi w_r \, \frac{d |z|}{2 \pi}\, .
\end{equation}
The orthogonality relations \eqref{counter6}
allows to compute  $\Lambda(s^m)$ by expressing $s^m$ as linear combinations of the one-sided orthogonal polynomials $\{\phi_{j}\}_{j=0}^m$ . Therefore,
 \eqref{counter6}
allows to determine the Fourier coefficients of $\Lambda$ and thus determines $\Lambda$ uniquely. In particular, as $\Lambda$ has outside its singularities at $\pm 1 $ non-integrable density, it is not given by integration against a measure. Hence there is no measure in 
$\mathcal{T}_-$ which has
$\phi_{1,n}$, ${\ti \phi}_{1,m}$
as one-sided orthogonal polynomials, and hence similarly for the monic polynomials $\Phi_{1,n}$ and $\ti \Phi_{1,n}$ defined by \eqref{eq:Szego_recur_intro}. This proves Part
\ref{thm:su2_part2} of Theorem
\ref{thm:su2}.

\section{Appendix: The Fej\'er kernel as linear approximation}\label{sec:appendix}

We show that the first order approximation to the diagonal evaluation
\begin{equation}\label{nonlinfejer}
    \frac 1{n+1} \left|{K_n(s,s)}-\overline{w (s) ^{-1}}{D_n(s,s)}\right|
\end{equation}
of the quantity in Theorem \ref{fejerconvergence} controls convergence of  the Fej\'er means.  The first order approximation is the affine linear part of the expansion into a multilinear series in $F=(F_n)$, which converges for most quantities in nonlinear Fourier analysis  \cite{tsai}, \cite{QSP_NLFA}
for finite $\|F\|_1$.
 Recall from \eqref{eq:NLFT_defn_intro}
that
\begin{equation}\label{recallphi}   
\phi_n(z) = z^{-n} (a_n(z)  + b_n ^*(z))\,  ,  \qquad 
\tilde{\phi}_n(z) =  
 z^{-n} (a_n (z) - b_n ^*(z))\, ,
\end{equation}
which we use to express the reproducing kernel \eqref{reprokernel} along its diagonal as
\begin{equation}\label{fejer1}
K_n(s,s)=\sum_{j=0}^n {\phi_j^*(s)}\tilde{\phi}_j(s)
=\sum\limits_{j=0}^n (a_j^* (s) + b_j(s)) (a_j (s) - b_j ^*(s))\, .
\end{equation}
Note also $D_n(s,s)=n+1$. 
Denote the linear Fourier series of the sequence $F$ by \[\widehat{F}(z): =\sum\limits_{k} F_k z^k\, ,\]
and similarly for any truncation of the sequence $F \mathbf{1}_{[0,j]}$.
Coming from 
 one sided orthogonal polynomials, we assume that $F_k=0$ unless $k>0$.
For small $\|F\|_1$, by \cite[(5.8), (5.9), (5.10), (5.12)]{QSP_NLFA} we have the approximations
\begin{equation}\label{eq:lin_approx_an}
a_n(z) = 1 + O(\|F\mathbf{1}_{[1,n]}\|_1^2)\, , \ a(z) = 1 + O(\|F\|_1^2)\, ,
\end{equation}
\begin{equation}
    \label{eq:lin_approx_bn}
b_n(z) = \widehat{F\mathbf{1}_{[1,n]}} (z) + O(\|F\mathbf{1}_{[1,n]}\|_1^3) \, ,\ 
b(z) = \widehat{F} (z) + O(\|F\|_1^3) \, .
\end{equation}
Recalling \eqref{measure with NLFT}, for small $\|F\|_1$ we have 
\begin{equation}\label{fejer2}
  \overline{w(s)^{-1}}=(a^*(s)+b(s))(a(s)-b^*(s)) \, ,
\end{equation}
because each factor on the right side is bounded above and below for small $\|F\|_1$ by \eqref{eq:lin_approx_an} and \eqref{eq:lin_approx_bn}.
We thus obtain for
\eqref{nonlinfejer} up to second or higher order terms
    \begin{equation*}
        \frac 1{n+1} \left| \left \{\sum_{j=0}^n 1+\widehat{F\mathbf{1}_{[0,j]}} (s)-
    \overline{\widehat{F\mathbf{1}_{[0,j]}} (s)} \right \}
        - (1+ \widehat{F} (s)-
        \overline{\widehat{F} (s)})(n+1)\right|
    \end{equation*}
     \begin{equation}\label{fejer10}
        =2 \left|\frac 1{n+1} \left \{ \sum_{j=0}^n \Im \widehat{F\mathbf{1}_{[0,j]}} (s) \right \} - \Im \widehat{F}(s)\right|\, .
    \end{equation}   
Extend $(F_n)_{n\geq0}$ to the negative integers by defining $F_{-k} : =\overline{F_k}$ for $k \geq 0$, and also call this new sequence $F$. We may then express 
\eqref{fejer10} as
\begin{equation}\label{fejer20}
        \left|\frac 1{n+1} \left \{ \sum_{j=0}^n \widehat{F\mathbf{1}_{[-j,j]}} (s) \right \} -  \widehat{F}(s)\right|\, .
    \end{equation}  
This is precisely the difference between the Fej\'er sum of $\widehat{F}$ of order $n$
and $\widehat{F}$ itself. This difference tends to zero
as $n\to \infty$ for general $\widehat{F}\in L^2(\T)$ at almost every point in $\T$.

\bibliographystyle{amsalpha}

\bibliography{references}

\end{document}